\numberwithin{equation}{section}
\newtheorem{theorem}{Theorem}[section]
\newtheorem{corollary}[theorem]{Corollary}
\newtheorem{lemma}[theorem]{Lemma}
\newtheorem{proposition}[theorem]{Proposition}
\newtheorem{conjecture}[theorem]{Conjecture}
\theoremstyle{definition}
\newtheorem{definition}[theorem]{Definition}
\theoremstyle{remark}
\newtheorem{example}[theorem]{Example}
\newtheorem{remark}[theorem]{Remark}
\newcommand{\supth}[1]{\ensuremath{#1^{\mathrm{th}}}}
\newcommand{\supnd}[1]{\ensuremath{#1^{\mathrm{nd}}}} 
\newcommand{\FF}{{\mathbb F}}
\newcommand{\GG}{{\mathbb G}}
\newcommand{\PP}{{\mathbb P}}
\newcommand{\TT}{{\mathbb T}}
\newcommand{\ZZ}{{\mathbb Z}}
\newcommand{\ann}{{\rm{ann}}}
\newcommand{\Ext}{{\rm{Ext}}}
\newcommand{\id}{{\rm{id}}}
\newcommand{\coker}{{\rm{coker}\,}}
\newcommand{\ev}{\mathrm{ev}}
\newcommand{\odd}{\mathrm{odd}}
\newcommand{\longleftmapsto}{\mathrel{\reflectbox{\ensuremath{\longmapsto}}}}
\newcommand{\es}{\mathrm{es}}
\newcommand{\bigslant}[2]{\left.\raisebox{.2em}{$\displaystyle #1$}\middle/\raisebox{-.2em}{$\displaystyle #2$}\right.}
\newcommand{\s}{\mathcal}
\newcommand{\sA}{{\s A}}
\newcommand{\sB}{{\s B}}
\newcommand{\sC}{{\s C}}
\newcommand{\sE}{{\s E}}
\newcommand{\sF}{{\s F}}
\newcommand{\sG}{{\s G}}
\newcommand{\sH}{{\s H}}
\newcommand{\sL}{{\s L}}
\newcommand{\sN}{{\s N}}
\newcommand{\sO}{{\s O}}
\newcommand{\sU}{{\s U}}
\newcommand{\cO}{{\s O}}
\newcommand{\cN}{{\s N}}
\newcommand{\lra}{\longrightarrow}
\newcommand{\tensor}{\otimes}
\newcommand{\punkt}{\hspace{-.3ex}\raise.15ex\hbox to1ex{\Huge.}}
\DeclareMathOperator{\Pic}{Pic}
\DeclareMathOperator{\Sym}{Sym}
\DeclareMathOperator{\Spec}{Spec}
\DeclareMathOperator{\Hom}{Hom}
\DeclareMathOperator{\codim}{codim}
\DeclareMathOperator{\rank}{rk}
\newcommand{\gm}{\mathfrak m}
\def\AA{{\mathbb A}}
\def\PP{{\mathbb P}}
\def\FF{{\mathbb F}}
\def\TT{{\mathbb T}}
\def\sEnd{{\mathcal End}}
\DeclareMathOperator{\Cliff}{{\rm Cliff}}
\DeclareMathOperator{\chara}{{char}}
\DeclareMathOperator{\hess}{{hess}}
\def\all{{\{1,\ldots,2g+2\}}}
\def\Spec{{{\rm Spec}\,}}
\def\lbracket{{[\kern-1.5pt[}}
\def\rbracket{{]\kern-1.5pt]}}
\title{Hyperelliptic Curves and Ulrich sheaves on the complete intersection of two quadrics}
\author{David Eisenbud}
\address{Department of Mathematics, University of California at Berkeley and the Mathematical
Sciences Research Institute, Berkeley, CA 94720, USA}
\email{de@msri.org}
\author{Frank-Olaf Schreyer}
\address{Fachbereich Mathematik, Universit\"at des Saarlandes, Campus E2 4, D-66123 Saar\-br\"ucken, Germany}
\email{schreyer@math.uni-sb.de}
\begin{document}



\maketitle

\begin{prelims}

\DisplayAbstractInEnglish

\bigskip

\DisplayKeyWords

\medskip

\DisplayMSCclass







\end{prelims}


\newpage

\setcounter{tocdepth}{1}

\tableofcontents


\section{Introduction}

Let $k$ be an algebraically closed field of characteristic not 2. 
The periodicity theorem of Kn\"orrer \cite{Knorrer} shows that the indecomposable Ulrich
bundles on a smooth quadric hypersurface in $\PP^{2g+1}$ over $k$
have rank $2^{g-1}$. In this paper we construct Ulrich bundles of the same rank $2^{g-1}$
on every smooth complete intersection $X$ of 2 quadrics in $\PP^{2g+1}$, and we show that
every Ulrich bundle has rank of the form $r2^{g-2}$ where $r\geq 2$ and $rg$ is  even.
To prove this we use an equivalence of categories that extends Reid's famous description of the
Jacobian of a hyperelliptic curve \cite{ReidThesis}.

Let $X\subset \PP^n$ be a projective scheme with homogenous coordinate ring $P_X$. Recall that a  sheaf $\sE$ on $X$  is called Ulrich if the graded module of twisted global sections  $H^0_*(\sE)$ is a maximal Cohen--Macaulay $P_X$-module generated in degree 0 and having linear free resolution over the coordinate ring of $\PP^n$, or equivalently if
$H^i(\sE(m)) = 0$ for all $m$ with  $-1\ge m \ge -\dim X$ and all $i$. See \cite{ES1} for further information and examples.

Let $X$ be the smooth complete intersection defined by two quadratic forms $q_1, q_2$ on 
$\PP^{2g+1}$ over an algebraically closed field $k$ of characteristic not 2.

The pencil of quadrics $sq_{1}+tq_{2}, (s,t)\in \PP^{1}$ becomes singular at $2g+2$ points of $\PP^{1}$.
 Let $E$ be the hyperelliptic curve with homogeneous coordinate ring $k[s,t,y]/(y^{2} - f)$ branched over these points, and let $C$ be the $\ZZ$-graded Clifford algebra of the form $sq_1+tq_2$ over
$k[s,t]$. 

We give two approaches to the construction of Ulrich sheaves on $X$. The first makes use of three categories:
\begin{enumerate}
\item[(i)] the category of coherent sheaves on $E$,
\item[(ii)] the category of graded $C$-modules, and
\item[(iii)] the category of coherent sheaves on $X$.
\end{enumerate}
Categories (i) and (ii) are related by Morita equivalence, while categories (ii) and (iii) are related by a version 
of the Bernstein--Gel'fand--Gel'fand correspondence.

Composing these correspondences to go from (i) to (iii), we show that every Ulrich module on $X$ has rank $r2^{g-2}$ for
some integer $r\geq 2$. 

Following \cite{MR2639318} we say that a bundle $\sB$ on $E$ has the \emph{Raynaud property} if $H^0(C, \sB) = H^1(C,\sB)=0$. We use the fact that
the center of the even Clifford algebra is the homogeneous coordinate ring of $E$, and that  the category of coherent sheaves of modules over the sheafified even Clifford algebra $\sC^{\ev}\cong \sEnd_E(\sF_U)$ is Morita equivalent to the category of
coherent sheaves on $E$ \emph{via} an $\sO_{E}-\sC^{\ev}$ bundle $\sF_U$ defined in Section~\ref{pencil section}.  With this notation, our first main theorem is the following.

\begin{theorem}\label{main theorem}
There is a 1-1 correspondence between Ulrich bundles on the smooth complete intersection of two quadrics $X\subset \PP^{2g+1}$ and  bundles
of the form
$\sG \otimes \sF_{U}$ with the Raynaud property on the corresponding hyperelliptic curve $E$ .
The Ulrich bundle corresponding to a rank $r$ vector bundle $\sG$ has rank $r2^{g-2}$. 

If $\sL$ is a line bundle on $E$ then $\sL \otimes \sF_{U}$ does not have the Raynaud property, so the minimal possible rank of an Ulrich sheaf on $X$ is $2^{g-1}$, and Ulrich bundles of  rank $2^{g-1}$ exist.
\end{theorem}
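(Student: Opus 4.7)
The plan is to combine the three-category framework sketched in the introduction with standard numerical invariants on $E$, writing $\pi\colon E \to \PP^1$ for the hyperelliptic double cover.

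\emph{The correspondence and rank formula.} Starting from a coherent sheaf $\sG$ on $E$, Morita equivalence produces the $\sC^{ev}$-module $\sG \otimes_{\sO_E} \sF_U$, which is a graded $C$-module $M$. Applying the BGG-type correspondence to $M$ yields a coherent sheaf $\sE$ on $X$. The defining Ulrich vanishing $H^i(\sE(m))=0$ for $-1 \ge m \ge -(2g-1)$ translates under BGG into linearity and degree-$0$ generation of the free resolution of $M$, which further reduces via Morita to $H^0(\sG \otimes \sF_U) = H^1(\sG \otimes \sF_U)=0$, i.e.\ the Raynaud property. Since both Morita and BGG are equivalences this is a bijection. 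Tracking ranks: $\rank \sF_U = 2^g$ (since $\sEnd(\sF_U)\cong \sC^{ev}$ has rank $2^{2g}$ over $\sO_E$), and a rank-$r$ bundle $\sG$ produces a sheaf of generic $\sO_E$-rank $r\cdot 2^g$, which together with $\deg X = 4$ yields an Ulrich sheaf of rank $r\cdot 2^{g-2}$.

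\emph{Line bundles fail the Raynaud property.} By Riemann--Roch on $E$,
\[
\chi(\sL \otimes \sF_U) = 2^g\deg(\sL)+\deg(\sF_U)-2^g(g-1).
\]
Since $\sF_U$ is only determined up to tensoring by a line bundle, the residue $\deg(\sF_U) \bmod 2^g$ is intrinsic. I would verify by direct computation from the half-spinor structure of $\sF_U$ on the double cover that $\deg(\sF_U)\equiv 2^{g-1} \pmod{2^g}$. Then $\chi(\sL\otimes \sF_U)$ is never zero for any line bundle $\sL$, so at least one of $H^0$, $H^1$ is nonzero, and the Raynaud property fails.

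\emph{Minimal rank and existence.} The rank formula together with the failure at $r=1$ gives the lower bound $2 \cdot 2^{g-2}=2^{g-1}$. For existence, choose a rank-$2$ bundle $\sG$ of degree $d = 2(g-1) - \deg(\sF_U)/2^{g-1}$, which is an integer by the parity just established, so $\chi(\sG\otimes \sF_U)=0$. The locus of $\sG \in M_E(2,d)$ with $H^0(\sG\otimes\sF_U)\neq 0$ is a generalized theta divisor, a proper closed subset provided its complement is nonempty. Nonemptiness is established by an explicit construction, for instance a generic extension $0 \to \sL_1\to \sG \to \sL_2\to 0$ with the $\sL_i$ chosen so that each $\sL_i\otimes \sF_U$ has minimal possible cohomology and the extension class kills the residual sections.

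\emph{Main obstacle.} The critical numerical input is the parity $\deg(\sF_U)\equiv 2^{g-1}\pmod{2^g}$, which requires an explicit analysis of $\sF_U$ via the geometry of the double cover $E\to\PP^1$ and the half-spinor representation; without this congruence one cannot simultaneously rule out $r=1$ and attain $\chi=0$ at $r=2$. A secondary difficulty is proving nonemptiness of the complement of the generalized theta divisor, since the Euler characteristic vanishing alone does not suffice: an explicit rank-$2$ bundle with truly vanishing cohomology after twist by $\sF_U$ must be exhibited, likely via a Serre-duality argument tailored to $\sF_U^{*}$.
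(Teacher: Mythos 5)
Your outline of the correspondence (Morita $\Rightarrow$ BGG $\Rightarrow$ Ulrich) is conceptually aligned with the paper, but two of the three parts of your argument have genuine gaps, and one is actually wrong.

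\textbf{Line bundles: the Euler characteristic is not the right obstruction.} You propose to show that $\chi(\sL \otimes \sF_U) \neq 0$ for all line bundles $\sL$, hinging on a claimed congruence $\deg(\sF_U) \equiv 2^{g-1} \pmod{2^g}$. But the paper computes (Proposition \ref{degreeOfFu}) that $\deg \sF_U = g\,2^{g-1}$. Plugging into your formula gives
\[
\chi(\sL \otimes \sF_U) = 2^g\deg\sL + g\,2^{g-1} - 2^g(g-1) = 2^g\deg\sL + 2^{g-1}(2-g),
\]
which vanishes precisely when $\deg\sL = (g-2)/2$. For \emph{even} $g$ this is achievable, so the numerical invariant does not rule out line bundles; your claimed congruence $\deg(\sF_U)\equiv 2^{g-1}\pmod{2^g}$ is equivalent to $g$ odd and is simply false for $g$ even. (The parity constraint you do get is exactly Proposition \ref{congruenceCondition}, $r\cdot g \equiv 0 \pmod 2$, which is a \emph{necessary}, not sufficient, condition.) The paper's proof of the line-bundle impossibility is structural, not numerical: by Reid's Theorem (Corollary \ref{ReidsTheorem}) any $\sL \otimes \sF_U$ is isomorphic to $\sF_{U'}(mp)$ for some maximal isotropic $U'$, and the associated Tate resolution is that of $P_{U'}$ up to shift; since $P_{U'}$ is not a maximal Cohen-Macaulay $P_X$-module, its Tate resolution has overlapping strands and cannot come from an Ulrich module. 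You need something like this structural fact; Euler characteristics alone cannot close this case.

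\textbf{The correspondence itself.} Saying ``both Morita and BGG are equivalences, so this is a bijection'' glosses over the real content. The forward direction in the paper is Theorem \ref{UlrichCondition}, which characterizes when the Tate-resolution construction applied to a Clifford module $N$ produces an Ulrich module, and this requires the cohomology bookkeeping of Theorem \ref{construction of TN}. The converse direction (from an Ulrich module $M$ back to a Raynaud bundle) requires identifying the odd part of $\Ext_{P_X}(M,k)$ as a sheaf of the form $\sG \otimes \sF_U$, using the Eisenbud-Shamash freeness and Morita; this is Proposition \ref{bundle from ulrich}. You would need to supply both of those arguments rather than invoke the equivalences abstractly, since ``equivalence of module categories'' does not by itself identify which objects on one side correspond to Ulrich modules on the other.

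\textbf{Existence.} Your theta-divisor approach to existence at $r=2$ is a legitimate alternative route, and indeed close to what the paper does experimentally (the discussion after Proposition \ref{congruenceCondition}). But you correctly flag the issue: nonemptiness of the complement of the theta divisor must be proved, and the paper explicitly says it could not carry this out in general. The paper instead sidesteps the question entirely with a direct construction in Section 5 using Kn\"orrer matrix factorizations restricted to an isotropic subspace (Theorem \ref{specialUlrich} and Theorem \ref{2n+1}), which works uniformly for all $g$. Your route would, if completed, give a more intrinsic proof on the curve side; the paper's route is more explicit and avoids delicate vanishing arguments, at the cost of introducing an independent construction.
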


The set of bundles $\sG$ such that $\sG \otimes \sF_U$ has the Raynaud property forms a (possibly empty) open subset in any flat family of  rank $r$ vector bundles on $E$. Our second main theorem,  the existence statement for $r=2$ is proven using a previously undiscovered property of Kn\"orrer's matrix factorizations to give a  construction of an Ulrich sheaf of
the minimal possible rank, $2^{g-1}$ on any smooth complete intersection
of two quadrics in $\PP^{2g+1}$ and in $\PP^{2g+2}$. 

Based on computed examples using our package \cite{EKS} with Yeongrak Kim, we conjecture the following.

\begin{conjecture} There exist indecomposable Ulrich bundles of rank $r 2^{g-2}$ on every smooth complete intersection of two quadrics in $\PP^{2g+1}$ for $g\ge 1$ and $r \ge 2$ if and only if $r g \equiv 0 \mod 2$.
\end{conjecture}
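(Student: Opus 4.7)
By Theorem~\ref{main theorem}, an Ulrich bundle on $X$ of rank $r\cdot 2^{g-2}$ corresponds to a rank-$r$ vector bundle $\sG$ on $E$ such that $\sG\otimes\sF_U$ has the Raynaud property, and because both the Morita equivalence and the BGG correspondence preserve endomorphism algebras, the Ulrich bundle is indecomposable if and only if $\sG$ is. The conjecture thus reduces to a statement intrinsic to $E$: an indecomposable rank-$r$ bundle $\sG$ with $H^0(\sG\otimes\sF_U)=H^1(\sG\otimes\sF_U)=0$ exists if and only if $rg\equiv 0\pmod 2$.

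For the parity \emph{necessity} I would use the Euler characteristic constraint. Writing $n=\rank\sF_U$ and $e=\deg\sF_U$, the Raynaud property forces $\chi(\sG\otimes\sF_U)=0$, so Riemann--Roch on $E$ gives
\[
n\deg\sG + re = rn(g-1),
\]
and integrality of $\deg\sG$ requires $n\mid re$. Using that $\sC^{ev}\cong\sEnd_E(\sF_U)$ is Azumaya of degree $2^g$ on $E$, so $n=2^g$, together with a Grothendieck--Riemann--Roch computation along the double cover $E\to\PP^1$ and the description of the sheafified Clifford algebra on $\PP^1$ by symmetric powers of $\sO_{\PP^1}(-1)$, one finds $e/n \equiv g/2 \pmod{\ZZ}$. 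Hence $n\mid re$ if and only if $rg$ is even, which rules out the case where both $r$ and $g$ are odd.

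For \emph{sufficiency} I would combine openness of the Raynaud condition in moduli with an inductive extension construction. The locus in the moduli space $M_E(r,d_0)$ of rank-$r$ bundles of the correct degree $d_0$ on which $\sG\otimes\sF_U$ is Raynaud is Zariski open, by semicontinuity of $h^0$ and $h^1$. The base case $r=2$ is handled in Section~5. For even $r\ge 4$, inductively take a general non-split extension of a rank-$2$ Raynaud bundle by a rank-$(r-2)$ Raynaud bundle; the Raynaud property is preserved by the long exact cohomology sequence after twisting by $\sF_U$, and the resulting $\sG$ can then be deformed inside $M_E(r,d_0)$ to a stable, hence indecomposable, bundle. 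For odd $r\ge 3$ (and necessarily even $g$), start from an extension of a rank-$2$ Raynaud bundle by a line bundle $\sL$ satisfying $H^i(\sL\otimes\sF_U)=0$ for $i=0,1$, then deform to a stable bundle; the parity computation above guarantees that a line bundle $\sL$ of the required degree exists exactly when $g$ is even.

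The most delicate step is the existence of such a line bundle $\sL\in\Pic^{e_0}(E)$ when $g$ is even, since the Ulrich/Raynaud condition forces a specific integer degree and one must then rule out the possibility that $H^0(\sF_U\otimes\sL)\ne 0$ for every $\sL$ of that degree. I would attack this via a theta-divisor argument: because $\chi(\sF_U\otimes\sL)=0$, the ``bad'' locus $\{\sL:H^0(\sF_U\otimes\sL)\ne 0\}\subset\Pic^{e_0}(E)$ is a determinantal subvariety --- the theta divisor of $\sF_U$ --- and one must show it is a proper subvariety by exhibiting a single good $\sL$, for instance built from a theta-characteristic twist on $E$ via the known relationship between $\sF_U$ and the spinor bundles of the quadric pencil. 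Once such an $\sL$ is in hand, the deformation-to-stability step is standard: the extensions produced above are simple for a generic choice of extension class, and simple bundles deform to stable ones in the moduli space of bundles of the prescribed determinant, while openness of the Raynaud locus propagates the vanishing to the deformed bundle.
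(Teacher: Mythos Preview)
The statement you are trying to prove is a \emph{conjecture} in the paper, not a theorem. The paper establishes only the necessity direction (Proposition~\ref{congruenceCondition}), and your Euler-characteristic argument for necessity is essentially the same as theirs, using $\rank\sF_U=2^g$ and $\deg\sF_U=g\cdot 2^{g-1}$ from Proposition~\ref{degreeOfFu}. The sufficiency direction remains open; the authors explicitly write that they ``were not able to control the cohomology of $\sG\otimes\sF_U$ theoretically well enough'' to prove even the $r=2$ case by this route. Their Section~5 construction of rank-$2^{g-1}$ Ulrich bundles proceeds by an entirely different method, via Kn\"orrer matrix factorizations, and does not go through a rank-$2$ bundle on $E$ with the Raynaud property.

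Your proposed sufficiency argument has a fatal gap in the odd-$r$ case. For even $g$ and $r=3$ you want a line bundle $\sL$ on $E$ with $H^0(\sL\otimes\sF_U)=H^1(\sL\otimes\sF_U)=0$. But Theorem~\ref{main theorem} asserts, and its proof shows, that \emph{no} line bundle $\sL$ has this property, regardless of the parity of $g$: writing $\sL=\sL_0(mp)$ with $\sL_0\in\Pic^0(E)$, Corollary~\ref{ReidsTheorem} gives $\sL\otimes\sF_U\cong\sF_{U'}(mp)$ for some maximal isotropic $U'$, and the Tate resolution of $P_{U'}$ has overlapping strands of length $g$ (Example~\ref{g=3 example}), so $h^0(\sF_{U'}(mp))$ and $h^1(\sF_{U'}(mp))$ never vanish simultaneously. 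In your theta-divisor language, the locus $\{\sL:H^0(\sF_U\otimes\sL)\ne 0\}$ in $\Pic^{(g-2)/2}(E)$ is the \emph{entire} Picard variety, not a proper divisor; the ``single good $\sL$'' you hope to exhibit provably does not exist. Your inductive scheme for even $r$ is more plausible but also incomplete: the claim that a generic non-split extension of Raynaud bundles can be deformed within $M_E(r,d_0)$ to a \emph{stable} bundle while staying inside the Raynaud locus is not a standard fact and would itself require justification.
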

\noindent
By Proposition \ref{congruenceCondition} the condition is necessary.

\medskip
In Section~\ref{sec:vector bundles} we explain the description of vector bundles on $E$ in terms of matrix factorizations. In the case of line bundles, this theory can be traced through Mumford's~\cite{Mumford-Tata} to work of Jacobi~\cite{Jacobi}.
 
In Section~\ref{BGG} we explain the relation of categories (ii) and (iii), a form of the Bernstein--Gel'fand--Gel'fand (BGG) correspondence that holds for all complete intersections of quadrics. As far as we know this correspondence was first introduced in~\cite{BEH}, and greatly extended in~\cite{Kapranov}. For the reader's convenience we review the results that we will use.
 
In Section~\ref{pencil section} we establish the Morita equivalence between categories (i) and (ii). In fact every maximal (simultaneous) isotropic plane $U$ for $q_1$ and $q_2$ gives rise to a Morita bundle $\sF_{U}$ and any two differ by the tensor product with a line bundle on $E$. This explains the well-known result of Miles Reid's thesis that the space of maximal (simultaneous) isotropic planes for $q_1$ and $q_2$ can be identified with  the Jacobian of $E$.
 
In Section~\ref{sec:Tate} we put these tools together with the theory of Tate resolutions and maximal Cohen--Macaulay approximations to establish the equivalence between Ulrich modules of rank $r2^{g-2}$ on $X$ and vector bundles of  rank $r$
 on $E$ that satisfy certain cohomological conditions. We show that no line bundles on $E$ satisfy the conditions, establishing
the lower bound for the rank of Ulrich modules announced above. 
This section was inspired by Buchweitz's famously unpublished manuscript on Koszul duality from 1986, now available at \cite{Buchweitz} and by the theory of Cohen--Macaulay approximations by Auslander and Buchweitz~\cite{Auslander-Buchweitz}.

It is natural to look for Ulrich bundles on $X$ using the shape of their  Tate resolutions over $P_{X}$. Theorem \ref{construction of TN} is analogous to the main result on Tate resolution of coherent sheaves on $\PP^{n}$ in \cite{EFS}: the Betti table of the Tate resolution over the exterior algebra  coincides with cohomology tables of the corresponding sheaf. In Theorem~\ref{construction of TN} the resolution over the exterior algebra is replaced by the Tate resolution over $P_{X}$.
  
In Section~\ref{sec:Ulrich}, which is independent of the rest of the paper, we give a direct construction of Ulrich modules of rank $2^{g-1}$ on any smooth complete intersection of quadrics in $\PP^{2g+1}$ and $\PP^{2g+2}$ with the minimal possible rank, $2^{g-1}$. In the case $g=2$ the existence and minimality was established by~\cite{ChoKimLee} with a different method.

\subsection*{Historical remarks}
The study of complete intersections of quadrics has a long history. 
The connection to vector bundles was discovered by Newstead \cite{Newstead}, Reid \cite{ReidThesis} and Desale--Ramanan~\cite{Desale-Ramanan} in the 1970's. The connection with Clifford algebras and Koszul pairs was used in~\cite{BEH} and more generally by Kapranov~\cite{Kapranov} in the 1980's.

The first three sections of the paper, which take the point of view of matrix factorizations, have their roots in an unpublished manuscript by our dear friend Ragnar Buchweitz (1952--2017) and the second author in 90's, now lost. The referee kindly pointed out to us that parts of Theorem~\ref{main1} can be deduced from Kuznetsov's work~\cite{Kuznetsov}, which, like the work of Kapranov~\cite{Kapranov} instead takes the point of view of derived categories.

The theory of quadratic complete intersections has many guises, and appears in descriptions of certain completely integrable systems, for example in the recent paper of Claire Voisin and her coauthors~\cite{BEHLV}. 

\subsection*{Acknowledgements}
We are grateful to the referees for their many constructive comments. This paper would have been impossible without the program Macaulay2~\cite{M2}. We thank Yeongrak Kim for working with us on the associated Macaulay2 package, distributed with the Macaulay2 program.

\section{Vector Bundles over a hyperelliptic curve \emph{via} matrix factorizations}\label{sec:vector bundles}

Let $E$ be a hyperelliptic curve of genus $g$ and let $\pi\colon E\to \PP^{1}$ its  double cover of $\PP^{1}$.
Let $\sH = \pi^{*}\cO_{\PP^{1}}(1)$ and
let $f(s,t)$ be the homogeneous polynomial of degree $2g+2$ such that
\[
R_{E}:= k[s,t,y]/(y^{2} - f) = \bigoplus_{n}H^{0}(E, \sH^{\otimes n}),
\]
so that the roots of $f$ are the ramification points of $\pi$ and $y\in H^{0}(E, \sH^{\otimes g+1})$.

For a coherent sheaf $\sG$ on $E$ we denote by
\[
H^i_*(\sG)= \bigoplus_n H^i(E,\sG\otimes \sH^{\otimes n}).
\]
Thus $H_*^0(\sO_E)=R_E$ and $\pi_*$ corresponds to forgetting the $y$-action on $H^0_*(\sG)$.

\begin{proposition}\label{vectorBundlesOnE}
If $\sL$ is a vector bundle on $E$, then $B = H^{0}_*(\sL)$ is a graded
free module over the homogeneous coordinate ring $k[s,t]$ of $\PP^{1}$, and $y\colon \sL \to \sL(g+1)$ induces a map 
$\phi = H^{0}_{*}y\colon  B \to B(g+1)$ such that
$\phi^{2}$ is multiplication by $f$; that is, a matrix factorization of $f$. 

Furthermore, given a graded free module $B$ corresponding to the vector bundle $\sB$ on $\PP^{1}$, and a map \mbox{$\phi\colon B\to B(g+1)$} with $\phi^{2} = f\cdot Id_{B}$, the sheaf 
\[
\sL=\coker\left(y-\phi \colon\, \pi^*\sB(-g-1) \lra \pi^* \sB\right)
\]
is a vector bundle on $E$
whose pushforward is $\sB$, and on which $y$ induces the matrix factorization $\phi$. We have 
\[
\chi(\sB) = \chi(\sL),\quad \rank \sB = 2\rank \sL, \quad \text{and}\quad \deg \sB=\deg \sL  - (\rank \sL)(1+g).
\]
\end{proposition}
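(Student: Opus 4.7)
My plan is to handle the two directions separately, with everything flowing from pushforward to $\PP^1$. For the forward direction, since $\pi$ is finite and flat and $\PP^1$ is smooth, $\pi_*\sL$ is a vector bundle on $\PP^1$, which by Grothendieck's splitting decomposes as a direct sum of line bundles; hence $B = H^0_*(\sL) = H^0_*(\pi_*\sL)$ is graded free over $k[s,t]$. Multiplication by $y \in H^0(E,\sH^{\otimes (g+1)})$ gives a morphism $\sL \to \sL(g+1)$, inducing $\phi \colon B \to B(g+1)$, and the relation $y^2 = f$ in $R_E$ forces $\phi^2 = f \cdot \id_B$.

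For the reverse direction, I would first show $\pi_*\sL = \sB$ by a direct module computation. Letting $M = H^0_*(\sL)$, the module $M$ is presented over $R_E$ by generators $B$ subject to the relations $y \cdot b = \phi(b)$ for $b \in B$. I would then check that the $k[s,t]$-linear map $M \to B$ sending $[b_1 + y b_2]$ (with $b_1, b_2 \in B$) to $b_1 + \phi(b_2)$ is an isomorphism: surjectivity is obvious from $[b] \mapsto b$, while injectivity follows from the identity $-\phi(b_2) + y b_2 = (y - \phi)(b_2)$, which places the preimage of $0$ inside $\image(y - \phi)$. Given $\pi_*\sL \cong \sB$, I would then argue that $\sL$ is locally free on $E$: any nonzero $\lambda \in \cO_{E,p}$ annihilating a section of $\sL_p$ would, through its integral (quadratic) relation over $\cO_{\PP^1, \pi(p)}$, yield a nonzero annihilator in $\cO_{\PP^1, \pi(p)}$, contradicting the local freeness of $\pi_*\sL$; hence $\sL$ is torsion-free on the smooth curve $E$ and therefore locally free.

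The numerical relations then follow directly: $\chi(\sL) = \chi(\pi_*\sL) = \chi(\sB)$ because $\pi$ is affine, $\rank \sB = 2 \rank \sL$ because $\pi$ has degree $2$, and combining Riemann--Roch on $E$ (genus $g$) with Riemann--Roch on $\PP^1$ yields $\deg \sB = \deg \sL - (g+1) \rank \sL$.

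The main subtlety I expect is that the map $y - \phi$ is \emph{not} injective: its kernel contains $\image(y + \phi)$, giving the two-periodic matrix factorization complex $\cdots \to \pi^*\sB(-2g-2) \xrightarrow{y+\phi} \pi^*\sB(-g-1) \xrightarrow{y-\phi} \pi^*\sB \to \sL \to 0$. Because of this one cannot compute $\pi_*\sL$ from a short exact sequence of vector bundles on $E$, making the direct module-theoretic argument above essential.
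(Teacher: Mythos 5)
Your proof is correct, and it takes a genuinely more hands-on route than the paper for the converse direction. The paper disposes of the converse in two lines: $(y-\phi,\,y+\phi)$ is a matrix factorization of $y^2 - f$ over $k[s,t,y]$, so $\coker(y-\phi)$ is a maximal Cohen--Macaulay $R_E$-module (a standard fact about matrix factorizations over hypersurface rings), and a graded MCM module over the coordinate ring of a smooth curve sheafifies to a vector bundle. The identification $\pi_*\sL \cong \sB$, the induced action of $y$, and the numerical relations are left implicit. You instead verify $\pi_*\sL \cong \sB$ by an explicit computation with the cokernel module and then deduce local freeness of $\sL$ from torsion descent: a torsion element over $\cO_{E,p}$ would give, via the norm (or constant term of the minimal polynomial) down to the DVR $\cO_{\PP^1,\pi(p)}$, a torsion element of the locally free sheaf $\pi_*\sL$. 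This buys you a completely elementary argument, it makes the pushforward and the induced matrix factorization visible, and it gives the numerical relations directly; the paper's approach buys brevity by appealing to the MCM theory that the first author developed.

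Two small points worth tightening. First, when you write $M = H^0_*(\sL)$ and present it by generators $B$ with relations $y\cdot b = \phi(b)$, you are really working with the cokernel module $M' = \coker(y-\phi \colon R_E \otimes B(-g-1) \to R_E\otimes B)$ rather than with $\Gamma_*(\sL)$; these agree only in high degrees a priori. Your computation shows $M' \cong B$ as a graded $k[s,t]$-module, hence $M'$ is free and in particular has depth $2$, which is exactly what guarantees $M' = \Gamma_*(\sL)$ on the nose --- but that last step deserves a sentence. Second, in checking that the map $[b_1 + y b_2]\mapsto b_1+\phi(b_2)$ is well defined, note that the full $k[s,t]$-linear span of the image of $y-\phi$ includes elements of the shape $f b' - y\phi(b')$ (coming from $y\cdot(yb'-\phi(b'))$), and killing those under your map is precisely where $\phi^2 = f\cdot\mathrm{Id}$ enters; you use $\phi^2 = f$ implicitly but should flag it.
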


The proof could be extended to show that
the category of vector bundles on $E$ is equivalent to the category of matrix factorizations of
$f$ over $k[s,t]$, \emph{cf.}~\cite{Eisenbud80}.

\begin{proof}[Proof of Proposition \ref{vectorBundlesOnE}]
The equation $\phi^{2} = f$ follows from functoriality. Conversely, if 
a matrix factorization $\phi^{2} = f\cdot Id_{B}$ is given, then $(y-\phi, y+\phi)$ is a matrix factorization of $y^{2}-f$ over $k[s,t,y]. $ Thus  the module $\coker(y-\phi)$ is a maximal Cohen--Macaulay $R_{E}$-module, and it follows that the sheaf
associated to its cokernel 
is a vector bundle  on $E$. 
\end{proof}

The next Theorem reduces the computation of the tensor product of vector bundles on $E$
to a syzygy computation, and will be used this way in the sequel.

\begin{theorem}\label{VB=MF}
If $\sL_{1}, \sL_{2}$ are vector bundles on $E$ with matrix factorizations $\phi_{i}$ on the graded free $k[s,t]$-modules
$
B_{i} = H_*^{0}(\sL_{i}),
$
then
\[
H_*^{0}(\sL_{1}\otimes \sL_{2}) 
= \ker\big(\phi_{1}\otimes 1 - 1\otimes \phi_{2}\colon\,
B_{1}\otimes B_{2}(g+1) \lra B_{1}\otimes B_{2}(2g+2)\big)
\]
and $\pi_{*}y$ acts on $\pi_{*}(\sL_{1}\otimes \sL_{2})$ with the common action of $\phi_{1}\otimes 1$ and $1\otimes \phi_{2}$.
\end{theorem}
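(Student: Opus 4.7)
The plan is to push the sheaf tensor product down to $\PP^{1}$ via the finite flat map $\pi$, use the algebra structure $\pi_{*}\sO_{E}=\sO_{\PP^{1}}\oplus\sO_{\PP^{1}}(-g-1)\cdot y$ (with $y^{2}=f$) to obtain a natural cokernel description of $H^{0}_{*}(\sL_{1}\otimes\sL_{2})$, and then exploit the identity $(A-B)(A+B)=A^{2}-B^{2}=0$, with $A=\phi_{1}\otimes 1$ and $B=1\otimes\phi_{2}$, to convert it to the claimed kernel description.

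Since $\pi$ is finite and flat, $\pi_{*}$ is exact and the projection formula gives $\pi_{*}(\sL_{1}\otimes_{\sO_{E}}\sL_{2})=\sB_{1}\otimes_{\pi_{*}\sO_{E}}\sB_{2}$. Combining the standard presentation of a tensor product over an algebra with the fact that $y$ acts as $\phi_{i}$ on $\sB_{i}$ yields
$$\pi_{*}(\sL_{1}\otimes\sL_{2}) \;=\; \coker\!\bigl(A-B\colon (\sB_{1}\otimes\sB_{2})(-g-1)\to \sB_{1}\otimes\sB_{2}\bigr).$$
Setting $T=B_{1}\otimes_{R}B_{2}$ at the level of graded modules, the commuting operators $A,B$ satisfy $A^{2}=B^{2}=f\cdot\mathrm{Id}_{T}$, so $(A\pm B)(A\mp B)=0$, and $A+B$ descends to a degree-zero graded map
$$\Psi\colon\coker\bigl(A-B\colon T(-g-1)\to T\bigr)\longrightarrow\ker\bigl(A-B\colon T(g+1)\to T(2g+2)\bigr).$$
A direct calculation shows that $\Psi$ intertwines the $y$-action on the cokernel, induced by either $\phi_{1}\otimes 1$ or $1\otimes\phi_{2}$, with the common action of $A=B$ on the kernel.

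Because $\sL_{1}$ and $\sL_{2}$ are locally free on the smooth curve $E$, all higher sheaf $\Tor$ over $\sO_{E}$ vanish; equivalently, localising the two-periodic complex
$$\cdots\xrightarrow{A+B}T(-g-1)\xrightarrow{A-B}T\xrightarrow{A+B}T(g+1)\xrightarrow{A-B}T(2g+2)\to\cdots$$
at any non-irrelevant prime of $R_{E}$ gives an exact complex, so $\Psi$ is an isomorphism on the punctured spectrum. The target of $\Psi$ sits inside the free graded $R$-module $T(g+1)$, hence is torsion-free; combined with the fact that the image of $A-B$ is also torsion-free (being a submodule of $T(2g+2)$), the depth inequality applied to the short exact sequence $0\to\ker(A-B)\to T(g+1)\to\mathrm{image}(A-B)\to 0$ forces the kernel to have depth at least $2$ over $R=k[s,t]$, and by Auslander--Buchsbaum it is free over $R$, in particular maximal Cohen-Macaulay over $R_{E}$. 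Since an MCM $R_{E}$-module is determined by its sheafification on $E$, the local isomorphism $\Psi$ upgrades to a global one, giving
$$H^{0}_{*}(\sL_{1}\otimes\sL_{2})\;=\;\ker\bigl(A-B\colon T(g+1)\to T(2g+2)\bigr),$$
with $\pi_{*}y$ acting as the common value of $\phi_{1}\otimes 1$ and $1\otimes\phi_{2}$.

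The main obstacle is the passage from cokernel to kernel: the raw module tensor product $L_{1}\otimes_{R_{E}}L_{2}$ can acquire finite-length $\gm$-torsion from module-level $\Tor$ at the irrelevant ideal, even when the sheaf $\Tor$ on $E$ vanishes. The kernel description circumvents this pathology because any submodule of a free graded $R$-module is torsion-free, and over the regular two-dimensional base $R$ a depth computation forces such a kernel to be free, and therefore to coincide on the nose with the MCM module $H^{0}_{*}(\sL_{1}\otimes\sL_{2})$.
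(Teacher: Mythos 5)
Your proof is correct and establishes the statement, but the key technical step---local exactness of the two-periodic complex built from $A=\phi_1\otimes 1$ and $B=1\otimes\phi_2$---is handled by a genuinely different argument than the paper's. The paper proves it by hand: fixing a closed point $x\in\PP^1$, writing each $\phi_i$ over the local ring $\sO_{\PP^1,x}$ in a normal form with respect to a splitting $F_i\oplus yF_i$, and checking that the resulting $4\times4$ block matrices for $\phi_1\otimes 1\pm 1\otimes\phi_2$ have full rank modulo the maximal ideal, so the sequence is locally split exact. You instead recognize the two-periodic complex as the complete $R_E$-free resolution of the MCM module $L_1$ tensored with $L_2$ over $R_E$, and invoke the vanishing of $\Tor^{\sO_E}_i(\sL_1,\sL_2)$ for $i>0$ because $\sL_1,\sL_2$ are locally free on the smooth curve $E$. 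That route is more conceptual and generalizes to other matrix-factorization situations, but it leans on the theory of complete resolutions over hypersurfaces and the two-periodicity of Tor; the paper's computation is more elementary and self-contained. The remaining ingredient---that $\ker(A-B)$ is free over $k[s,t]$, hence saturated, hence equal to $H^0_*$ of its sheafification---is common to both (the paper packages it as ``a second syzygy is free,'' you via depth counting and Auslander--Buchsbaum). Two places to tighten: the sentence ``all higher sheaf Tor vanish; equivalently, localising the two-periodic complex gives an exact complex'' glosses over the fact that only half the homology positions of the two-periodic complex are literally $\Tor_i(L_1,L_2)$, the remaining ones being $\Tor_i(\iota^*L_1,L_2)$ (equivalently, twist-periodic shifts of the same groups), which also vanish locally; and ``the local isomorphism $\Psi$ upgrades to a global one'' slightly overstates the situation, since $\Psi$ need not be globally injective if $\coker(A-B)$ carries irrelevant torsion---what you actually conclude, correctly, is that $\ker(A-B)$ equals $H^0_*$ of its sheafification, which by the local isomorphism coincides with $H^0_*(\sL_1\otimes\sL_2)$.
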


\begin{proof} 
The following sequence of maps  is a complex because $y^{2} = f$:
\begin{equation}\tag{$*$}\label{eq:complex}
B_{1}\otimes B_{2}(-g-1) \rTo^{\phi_{1}\otimes 1 - 1\otimes \phi_{2}}  B_{1}\otimes B_{2}  \rTo^{\phi_{1}\otimes 1 + 1\otimes \phi_{2}}  B_{1}\otimes B_{2}(g+1) \rTo^{\phi_{1}\otimes 1 - 1\otimes \phi_{2}} B_{1}\otimes B_{2}(2g+2)
\end{equation}
Since the $k[s,t]$-module 
\[
\ker\left( 
B_{1}\otimes B_{2}(g+1) \rTo^{\phi_{1}\otimes 1 - 1\otimes \phi_{2}} B_{1}\otimes B_{2}(2g+2)\right)
\]
 is a \supnd{2} syzygy, it is free. Thus, to prove the theorem, it suffices to show that the complex~\eqref{eq:complex} is 
locally exact and that the sheaf cokernel
\[
\coker\left( \sB_{1}\otimes \sB_{2}(-g-1) \rTo^{\phi_{1}\otimes 1 - 1\otimes \phi_{2}} \sB_{1}\otimes \sB_{2}\right)
\]
is $\pi_*(\sL_1\otimes_E\sL_2)$.

For simplicity of notation we ignore the twists by powers of $\sH$. 
Note that  $\sB_{i}:=\pi_{*}(\sL_{i})$ is the sheafification
of $B_{i}$. Since $\sL_{i}$ is the cokernel of $y-\phi_{i}$ we see that
$\sL_{1}\otimes_{E}\sL_{2}$ is the cokernel of 
\[
\left( \pi^* \sB_{1}\otimes_{E} \pi^* \sB_{2}\right) \oplus \left(\pi^* \sB_{2}\otimes_{E} \pi^* \sB_{1}\right) \rTo^{(y\otimes 1-\phi_1\otimes 1 , 1\otimes y-1\otimes \phi_{2})} \pi^* \sB_{1}\otimes_{E} \pi^* \sB_{2}.
\]
Since the tensor products are over $E$, the maps $y\otimes 1$ and $1\otimes y$ are
equal, and are simply multiplication by $y$, so this says that $\sL_{1}\otimes \sL_{2}$
is the universal quotient of $\pi^{*}\sB_{1}\otimes_{E} \pi^* \sB_{2}$ on which the maps
$y, \phi_{1}\otimes 1,  1\otimes \phi_{2}$ all agree. Furthermore,
\[
\pi_{*}(\pi^{*}\sB_{1}\otimes_{E} \pi^* \sB_{2}) = \pi_{*}\pi^{*}(\sB_{1}\otimes_{\PP^{1}} \sB_{2})
= \pi_{*}(\sO_{E}) \otimes_{\PP^{1}} \sB_{1}\otimes_{\PP^{1}} \sB_{2}.
\]
where the action of $y$ is on the first factor only.
Thus $\pi_*(\sL_{1}\otimes \sL_{2})$
is the cokernel of 
\[
\phi_{1}\otimes 1 - 1\otimes \phi_{2} : \sB_{1} \otimes \sB_{2} \rTo \sB_{1} \otimes \sB_{2}.
\]

To complete the proof we must show that the sequence~\eqref{eq:complex} is locally exact.
Choose a point
$x\in \PP^{1}$ and denote the local ring $\sO_{\PP^{1},x}$ by $A$ and the $A$-module
$\sB_{i,x}$ by $F_{1}+yF_{1}$
 where the $F_{i}$ are free $A$-modules. The endomorphism $\phi_i$ takes
$F_{i}$ to $yF_{i}$ by multiplying with $y$, and
  $yF_{i}$ to $F_{i}$ by sending $y$ to $f \in A$. In this notation, the maps
 $\phi_{1}\otimes 1 \pm 1\otimes \phi_{2}$ may be written as block matrices of the form
\[
\bordermatrix{
                  &F_{1}\otimes F_{2}& F_{1}\otimes yF_{2}&yF_{1}\otimes F_{2}&yF_{1}\otimes yF_{2}\cr
 F_{1}\otimes F_{2}& 0              &    \pm f                     & f                             & 0\cr
 F_{1}\otimes yF_{2}& \pm 1     & 0                             & 0                            &f \cr
 yF_{1}\otimes F_{2}& 1            & 0                             & 0                            & \pm f \cr
 yF_{1}\otimes yF_{2}& 0          &    1                           &\pm 1                             & 0
 }
\]
Modulo the maximal ideal of $A$ both these maps have rank equal to twice the rank of $F_{1}\otimes F_{2}$, so the sequence above is locally split exact, as required.
\end{proof}

\begin{definition}\label{LI}
Let $f(s,t) = \prod_{i=1}^{2g+2}f_{i}$ be a factorization of $f$ into (necessarily distinct) linear factors, and, for $I\subset \{1,\ldots, 2g+2\}$,
write $f_{I} := \prod_{i\in I}f_{i}$. We write $\phi_{I}$ for the matrix
\[
\begin{pmatrix}
 0& f_{I^{c}} \\
 f_{I}&0\\
\end{pmatrix}\colon\, \sO_{\PP^1}\left(\lceil - |I|/2\rceil\right) \oplus \sO_{\PP^1}\left(\lceil -|I^{c}|/2\rceil\right) \lra \sO_{\PP^1}\left(\lceil |I^{c}|/2\rceil\right) \oplus \sO_{\PP^1}\left(\lceil |I|/2\rceil\right)
\]
on $\PP^1$ where $I^{c}$ denotes the complement of $I$. Note that $(\phi_{I}, \phi_{I^{c}})$ is a matrix
factorization of $f$. Let $\sL_{I}$ be the corresponding line bundle on $E$, as defined in Proposition~\ref{vectorBundlesOnE}.
Note that $\sL_{I} \cong \sL_{I^{c}}$ and $\sL_{\emptyset}\cong \sO_{E}$.
Write $I \Delta J= (I \setminus J) \cup (J \setminus I)$ for the symmetric difference of $I$ and $J$.
\end{definition}

\begin{theorem}\label{even and odd}
    For $I,J \subset \{1,\ldots,2g+2\}$
\[
    \sL_{I} \tensor \sL_{J} \cong 
    \begin{cases}
     \sL_{I \Delta J} & \ \text{if}\ |I| \cdot |J| \equiv 0 \! \mod \; 2, \\
     \sL_{I \Delta J}(\sH) & \ \text{else.}
     \end{cases}
\]
Thus the line bundles $\sL_{I}$ with $|I|$  even are the $2^{2g}$ two-torsion line bundles on $E$. The line bundles $\sL_{I}$ with $|I|$ odd are the $2^{2g}$  square roots of $\sO_{E}(\sH)$.
\end{theorem}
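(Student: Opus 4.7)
My plan is to apply Theorem~\ref{VB=MF} directly: that theorem identifies $H^0_*(\sL_I\otimes\sL_J)$ with the kernel of $\phi_I\otimes 1-1\otimes\phi_J$ on the rank-$4$ free module $B_I\otimes B_J$, and the restriction of the common action $\phi_I\otimes 1=1\otimes\phi_J$ to this kernel will recover the matrix factorization of $\sL_I\otimes\sL_J$. In the basis $v_{\alpha\beta}=e_\alpha\otimes e'_\beta$, the map is an explicit $4\times 4$ matrix whose four kernel equations collapse in pairs thanks to the identity $f_If_{I^c}=f_Jf_{J^c}=f$, leaving the two independent relations $f_Ia_2=f_Ja_3$ and $f_Ia_1=f_{J^c}a_4$. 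Partitioning $\{1,\dots,2g+2\}$ into the four intersections $I\cap J$, $I\cap J^c$, $I^c\cap J$, $I^c\cap J^c$ and exploiting that the corresponding products of linear factors are pairwise coprime, one finds that the kernel is freely generated over $k[s,t]$ by the two homogeneous elements $u_1=f_{I^c\cap J}v_{12}+f_{I\cap J^c}v_{21}$ and $u_2=f_{I^c\cap J^c}v_{11}+f_{I\cap J}v_{22}$.

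A direct computation then shows that on these generators the common action $\psi=\phi_I\otimes 1=1\otimes\phi_J$ sends $u_1\mapsto f_{I\Delta J}u_2$ and $u_2\mapsto f_{(I\Delta J)^c}u_1$, which is exactly the matrix factorization $\phi_{I\Delta J}$ of Definition~\ref{LI}. Hence $\sL_I\otimes\sL_J$ agrees with $\sL_{I\Delta J}$ up to a twist by a power of $\sH$.

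The main obstacle I anticipate is the bookkeeping needed to pin down that twist. One must track the internal graded degrees of $u_1,u_2$ inside $B_I\otimes B_J(g+1)$ and compare with the expected generator degrees $\lfloor|I\Delta J|/2\rfloor$ and $\lfloor|(I\Delta J)^c|/2\rfloor$ of $B_{I\Delta J}$. A case analysis on the parities of $|I|,|J|$, using $|I|+|I^c|=|J|+|J^c|=2g+2$, shows that when at least one of $|I|,|J|$ is even the floors align and the degrees match on the nose, whereas when both are odd each $\lfloor\cdot/2\rfloor$ rounds down by $\tfrac12$, producing a net shift of $1$ that amounts to a twist by $\sH$. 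As an independent sanity check, Proposition~\ref{vectorBundlesOnE} gives $\deg\sL_K=|K|\bmod 2$, so $\deg(\sL_I\otimes\sL_J)\in\{0,1,2\}$; since $\deg\sH=2$, the twist by $\sH$ is forced precisely in the both-odd case.

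The concluding statements then follow by specialization. Setting $J=I$ gives $\sL_I^{\otimes 2}=\sO_E$ when $|I|$ is even and $\sL_I^{\otimes 2}=\sH$ when $|I|$ is odd. Each parity class contains $2^{2g+1}$ subsets of $\{1,\dots,2g+2\}$, and the involution $I\leftrightarrow I^c$, which preserves parity and under which $\sL_I\cong\sL_{I^c}$, collapses them to $2^{2g}$ isomorphism classes. Since $\jacobian(E)[2]$ has order $2^{2g}$ and the set of square roots of $\sH$ is a torsor over it, these $2^{2g}$ bundles in each parity class must exhaust the $2$-torsion bundles and the square roots of $\sH$, respectively.
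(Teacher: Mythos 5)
Your main computation follows the same route as the paper: apply Theorem~\ref{VB=MF} to identify $H^0_*(\sL_I\otimes\sL_J)$ with the kernel of $\phi_I\otimes 1-1\otimes\phi_J$, find the two free generators of that kernel in terms of the partition $I\cap J, I\cap J^c, I^c\cap J, I^c\cap J^c$ (your $u_1,u_2$ coincide, after relabelling, with the columns of the paper's matrix $B$), and read off the induced matrix factorization $\phi_{I\Delta J}$ from the action of the common $\psi$. The way you pin down the twist by $\sH$ via degree bookkeeping and the parity of $|I|,|J|$ is a harmless variant of the paper's direct verification.

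The gap is in the final paragraph. You assert that the involution $I\leftrightarrow I^c$ ``collapses $2^{2g+1}$ subsets to $2^{2g}$ isomorphism classes,'' but collapsing by an involution only gives an upper bound of $2^{2g}$ on the number of isomorphism classes; it does not show that distinct unordered pairs $\{I,I^c\}\ne\{J,J^c\}$ give non-isomorphic bundles. Your subsequent sentence --- that since $\jacobian(E)[2]$ has order $2^{2g}$ these bundles ``must exhaust'' the two-torsion --- is circular: it tacitly assumes the $2^{2g}$ pairs yield $2^{2g}$ distinct bundles, which is exactly the content you need to prove. The paper closes this by an explicit argument: an isomorphism $\sL_I\cong\sL_J$ would force equality of the Fitting-type ideals $(f_I,f_{I^c})=(f_J,f_{J^c})$ in $k[s,t]$, and an analysis of their low-degree generators shows this can only happen when $\{I,I^c\}=\{J,J^c\}$. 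You need some such injectivity argument (or, alternatively, the fact that the $\sL_{\{i,j\}}$ for ramification points $p_i,p_j$ generate $\jacobian(E)[2]$) before the counting can conclude.
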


\begin{proof} In this case the matrix $\phi_{I}\otimes 1 - 1\otimes \phi_{J}$ has the form
\[
\begin{pmatrix} 
0 & f_{I^c} &-f_{J^c} &0\cr 
f_I & 0 &  0 &-f_{J^c} \cr
-f_J&0 & 0 &f_{I^c} \cr
0 & -f_J & f_I & 0 
\end{pmatrix}.
\]
By Theorem~\ref{VB=MF}, its kernel is the free module $H^{0}_{*}(\sL_{I}\otimes \sL_{J}).$ Because $J^c \setminus I^c = I \setminus J$ and $I \setminus J^c=J \setminus I^c$ this kernel
 contains the free submodule $B$ generated by the column vectors
\[
\begin{pmatrix} 
0 & f_{J^c\setminus I}\cr 
f_{I\setminus J} & 0  \cr
f_{J\setminus  I}&0  \cr
0 &  f_{I \setminus J^c}
\end{pmatrix}.
\]
These columns generate the kernel because the $2\times 2$ minors of $B$
have no common factor (see \cite[Corollary~1]{Buchsbaum-Eisenbud}).
 
To show that $\sL_{I}\otimes \sL_{J} \cong \sL_{I\Delta J}$ it now suffices to show that the matrix
representing the action of $\phi_{1}\otimes 1$ restricted to the columns of $B$ is
\[
\begin{pmatrix}
 0& f_{(I\Delta J)^{c}} \\
 f_{I\Delta J}&0
\end{pmatrix}.
\]

This, in turn, follows at once from the identities
\[ 
I^c \cup (I \setminus J) = (I \Delta J) \cup (J^c \setminus I), \quad I \cup (J \setminus I) = (I \Delta J) \cup (I \setminus J^c)
\]
and similarly
\[
I \cup (J^c \setminus I) = (I \Delta J)^c \cup (I\setminus J),\quad I^c \cup (I \setminus J^c)=(I \Delta J)^c \cup (J\setminus I).
\]

To show that 
 $\sL_I \not\cong \sL_J$ for $J \notin \{I,I^c\}$ are non-isomorphic, we consider the ideals generated by the entries of 
\[
\begin{pmatrix}
 0& f_{I^{c}} \\
 f_{I}&0\\
\end{pmatrix} \ \text{and}\ \begin{pmatrix}
 0& f_{J^{c}} \\
 f_{J}&0\\
\end{pmatrix}.
\]
By looking at the elements of smallest degree, we see that these ideals could not be equal unless $|I|=|J|=g+1$. Also, in case  $|I|=|J|=g+1$, the intersection $I\cap J$ is non-empty since $J \not= I^c$ and for $i \in I\cap J$ we recover $f_I$ as the smallest degree generator of $(f_i) \cap (f_I,f_{I^c})$.  
 
 There are $2^{2g+2}/4$ unordered pairs $\{I,I^c\}$  of even subsets of $\all$. Thus we get all $2^{2g}$ different two-torsion line bundles $\sL_I$ for even $I$. A similar argument applies to roots of $\sH$.
\end{proof}

\section{BGG for complete intersections of quadrics}\label{BGG}

This section provides what we need of the theory of \cite{BEH} and  \cite{Kapranov}. Let $P_X := k[V^{*}]/(q_{1},\ldots ,q_{c})$ be the homogeneous coordinate ring of the complete intersection $X = Q_{1}\cap \cdots \cap Q_{c} \subset \PP(V^{*})=\PP^{r-1}$ of $c$~quadrics $Q_{i} = V(q_{i})$ and choose a basis $x_1,\ldots,x_r$ of  $V^*$.  Write $B_{\ell}$ for the symmetric matrix with
$i,j$ entry
\[
b_{\ell, i,j} = \frac{1}{2}(q_{\ell}(x_{i}+x_{j})-q_{\ell}(x_{i})-q_{\ell}(x_{j})).
\]

Let $T=k[t_{1},\ldots,t_{c}]$ denote a polynomial ring in $c$ variables  each of degree 2 and let 
\[
q:\left\{\begin{array}{rcl}
T \tensor V &\lra &T\\
1\tensor v & \longmapsto & t_{1}q_{1}(v)+\cdots +t_{c}q_{c}(v)\end{array}\right.
\] 
denote the corresponding family of quadratic forms over  $\Spec T$.
Let 
\[C := \bigslant{\left(T \tensor \left(\bigoplus_{n} V^{\tensor n}\right)\right)}{(v\tensor v -q(v)\mid v \in V)}
\]
denote the  $\ZZ$-graded Clifford algebra of $q$, so that
$C$ is the quadratic dual of $P_X$ in the sense of \cite{PolishchukPositselski}.
The algebra $C$ is free as a $T$-module with basis
\[
e_{I}=e_{i_{1}}e_{i_{2}}\cdots e_{i_{k}}
\]
where $e_{1},\ldots,e_{r}$ is a basis of $V$ dual to $x_1,\ldots ,x_r$ and $I = \{i_{1}<i_{2}< \cdots < i_k \} \subset \{1,\ldots,r\}$ an ordered subset.
See for example \cite[Section~4.8]{Jacobson}.

\begin{theorem} Let $P_X$ be the homogeneous coordinate ring of a complete intersection of $c$ quadrics, and let $C$ denote the corresponding $\ZZ$-graded    
 Clifford algebra. Then $P_X$ and $C$ are a pair of Koszul dual graded algebras.
In particular
\[\Ext_{P_X}(k,k) \cong C \ \text{and}\ \Ext_C(k,k) \cong P_X.\]
\end{theorem}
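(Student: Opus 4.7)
My plan is to show $P_X$ is Koszul, identify $C$ as its quadratic dual algebra, and apply the Koszul duality theorem \cite{PolishchukPositselski} to conclude.

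First, I would prove $P_X$ is Koszul. Since $X$ is a complete intersection, $q_1, \ldots, q_c$ is a regular sequence of degree-$2$ elements in the (Koszul) polynomial ring $k[V^*]$. A quotient of a Koszul algebra by a regular sequence of elements concentrated in a single internal degree remains Koszul (see \cite[Ch.~4]{PolishchukPositselski}), hence $P_X$ is Koszul.

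Next, I would verify $C \cong P_X^!$ as $\ZZ$-graded algebras. As a quadratic algebra, $P_X$ has relation space $R \subset V^* \otimes V^*$ spanned by the commutators $x_i \otimes x_j - x_j \otimes x_i$ together with the symmetric tensors representing $q_1, \ldots, q_c$. By definition $P_X^!$ is generated by $V$ modulo $R^\perp \subset V \otimes V$; a direct pairing shows that $R^\perp$ lies in $\Sym^2 V$ and coincides with the kernel of the evaluation map $\Sym^2 V \to k^c$ induced by $q_1, \ldots, q_c$, hence has codimension $c$. Picking a basis $t_1, \ldots, t_c$ of the quotient $\Sym^2 V / R^\perp$ dual to $q_1, \ldots, q_c$, the surviving degree-$2$ relations read
\[
e_i e_j + e_j e_i \;=\; 2 \sum_{\ell} b_{\ell, ij}\,t_\ell,
\]
which are exactly the defining Clifford relations $v \otimes v = q(v)$ of $C$ after regarding the $t_\ell$ as implicit generators. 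A Hilbert-series comparison using
\[
H_{P_X^!}(z) \;=\; \frac{(1+z)^r}{(1 - z^2)^c} \;=\; H_C(z)
\]
then confirms that the natural surjection $P_X^! \to C$ is an isomorphism of $\ZZ$-graded algebras.

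Finally, the Koszul duality theorem \cite{PolishchukPositselski} asserts that for a Koszul algebra $A$ one has $\Ext_A(k,k) \cong A^!$, and that $A^!$ is itself Koszul with $\Ext_{A^!}(k,k) \cong A$. Applied to $A = P_X$ with $A^! \cong C$, this yields both isomorphisms at once. The main obstacle is the second step, specifically the linear-algebra identification of $R^\perp$ with the annihilator of $q_1, \ldots, q_c$ in $\Sym^2 V$ together with tracking the normalization constant $2$ in the Clifford relation; these amount to bookkeeping with the symmetric matrices $B_\ell$.
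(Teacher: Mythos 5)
The paper does not give a proof here; it simply cites Sj\"odin, Kapranov, and Polishchuk--Positselski. Your argument fills in the standard details behind those references and is essentially correct: establish Koszulity of $P_X$, identify the quadratic dual $P_X^!$ with $C$ by computing $R^\perp$ and comparing Hilbert series, and then invoke Koszul duality. Two small cautions worth flagging. First, the general principle you state---``a quotient of a Koszul algebra by a regular sequence concentrated in a single internal degree remains Koszul''---is false as written (for degree $\ge 3$ the quotient is not even quadratic); what you actually need and use is the specific statement for regular sequences of \emph{quadrics}, which does hold (Tate's resolution of $k$ over a complete intersection of quadrics is linear, and this is the content of the Backelin--Fr\"oberg/Polishchuk--Positselski result you cite). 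Second, the step where you ``regard the $t_\ell$ as implicit generators'' is doing real work: you need that the canonical algebra map $P_X^! \to C$ (well-defined since symmetric tensors in $R^\perp$ are killed by the Clifford relation) is surjective, which requires the $t_\ell$ to be expressible in terms of the $e_i e_j + e_j e_i$, i.e.\ that the $q_\ell$ are linearly independent---guaranteed by the complete-intersection hypothesis. With that surjectivity in hand, your Hilbert-series comparison (valid because you established Koszulity of $P_X$ first) correctly pins down the isomorphism.
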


\begin{proof}
See \cite{Sjodin}, \cite[Section 1]{Kapranov} and~\cite{PolishchukPositselski}.
\end{proof}

\begin{corollary} For any graded $P_X$-module $M$ the module
$\Ext_{P_X}(M,k)$ is a graded $C=\Ext_{P_X}(k,k)$-module.
\end{corollary}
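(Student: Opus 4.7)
The plan is to invoke the Yoneda (composition) product on Ext. Recall that for any ring $R$ and any $R$-modules $M$, $N$, $L$, there is an associative, bilinear composition
$$
\Ext^j_R(N,L) \otimes \Ext^i_R(M,N) \to \Ext^{i+j}_R(M,L)
$$
defined by splicing exact sequences (equivalently, by composing maps between projective resolutions). Taking $M=N=L=k$ makes $\Ext_{P_X}(k,k)$ into a graded associative algebra under Yoneda product, and by the preceding theorem this is exactly the Clifford algebra $C$ with its stated $\ZZ$-grading.

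With this in hand, specializing to $N=L=k$ and allowing $M$ to be an arbitrary graded $P_X$-module gives a pairing
$$
\Ext^j_{P_X}(k,k) \otimes \Ext^i_{P_X}(M,k) \to \Ext^{i+j}_{P_X}(M,k),
$$
i.e.\ a left action of $\Ext_{P_X}(k,k)=C$ on $\Ext_{P_X}(M,k)$. Associativity of Yoneda composition yields the left-module axiom $(c_1 c_2)\cdot\alpha = c_1\cdot(c_2\cdot\alpha)$, and the identity of $\Ext_{P_X}(k,k)$ (the class of the identity extension) acts as the identity. Thus $\Ext_{P_X}(M,k)$ is a (left) graded $C$-module.

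The only real point to check is that the grading is the correct one. For this I would fix a minimal graded free resolution $F_\bullet \to M$ over $P_X$; then $\Ext^i_{P_X}(M,k)$ is computed by applying $\Hom_{P_X}(-,k)$ to $F_\bullet$, which gives a complex of graded $k$-vector spaces, and the result inherits both the cohomological degree $i$ and the internal degree coming from the grading of $F_\bullet$. Yoneda composition, being implemented on resolutions by lifting maps through $F_\bullet$ and a graded resolution of $k$, preserves both degrees. The matching of the bigrading with the single $\ZZ$-grading on $C$ is exactly the convention used in the preceding theorem (cohomological degree $+$ internal degree collapsed into the single $C$-grading), so no further reconciliation is needed.

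The step that is most easily overlooked is the grading compatibility, but since the previous theorem already asserts the isomorphism $\Ext_{P_X}(k,k)\cong C$ as $\ZZ$-graded algebras, there is no genuine obstacle; the corollary is a formal consequence of the functoriality of Yoneda composition.
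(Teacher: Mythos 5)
Your proof is correct and is the standard argument; the paper states this as an immediate corollary of the preceding Koszul-duality theorem and gives no proof at all, implicitly relying on exactly the Yoneda-product structure you spell out. Your discussion of grading compatibility is also appropriate and matches the conventions used in the paper.
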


The main result of this section is that for a graded $P_X$-modules $M$ with a linear resolution one can recover $M$ from the graded $C$-module $\Ext_{P_X}(M,k)$.

If $M$ is a (left) $P_X$-module and $N$ is a right $C$-module then we define an endomorphism of left \mbox{$P_X\otimes C$-modules}
\[
d: \Hom_{k}(N,M)\lra \Hom_{k}(N,M)
\]
taking $\phi \in \Hom_{k}(N,M)$ to $\psi$, where $\psi(n) = \sum_{i}x_{i}\phi(ne_{i})$.

Note that 
\begin{align*}
 d^{2}(\phi)(n) &= \sum_{i,j}x_{i}x_{j}\phi(n e_{i}e_{j})= \sum_{i\leq j}x_{i}x_{j}\phi(n (e_{i}e_{j}+e_{j}e_{i}))= \sum_{i\leq j}x_{i}x_{j}\phi(n \sum_{\ell}(t_{\ell}b_{\ell,i,j}))\\
&= \sum_{\ell}\sum_{i\leq j}b_{\ell,i,j}x_{i}x_{j}\phi(n t_{\ell})= \sum_{\ell}q_{\ell}(x)\phi(n t_{\ell})= 0.
\end{align*}

Thus, when $N$ is $\ZZ$-graded,  $\Hom_{k}(N,M)$ may be regarded as a complex of $P_X$-modules 
\[
\Hom_{k}(N,M)\,\colon\, \cdots \lra \Hom_{k}(N_{i},M) \lra \Hom_{k}(N_{i-1},M) \lra\cdots.
\]
When $M$ is $\ZZ$-graded and $N$ is  a $C-C$-bimodule, then $\Hom_{k}(N,M)$ may also be regarded as a complex of  right $C$-modules
\[
\Hom_{k}(N,M)\,\colon\, \cdots \lra \Hom_{k}(N, M_{i}) \lra \Hom_{k}(N,M_{i+1})\lra\cdots.
\]
Similar statements hold for $\Hom_{k}(M,N)$.

\begin{theorem}\label{linearResolutions}
 If the graded $P_X$-module $M$ has a linear free resolution, then the resolution may be written in the form
\[
\Hom_{k}(\Ext_{P_X}(M,k), P_X)
\]
 where we view $\Ext_{P_X}(M,k)$ as a graded $C=\Ext_{P_X}(k,k)$ module, and apply the construction above.
 \end{theorem}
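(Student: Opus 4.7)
The plan is to identify the complex $\Hom_{k}(\Ext_{P_X}(M,k), P_X)$, equipped with the differential $d$, with the minimal graded free resolution of $M$; once this identification is established, exactness (i.e.\ that it is a free resolution of $M$) is automatic.

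I would first fix the minimal graded free resolution $\mathcal{F}_{\bullet} \to M \to 0$. Because the resolution is linear, $\mathcal{F}_i = P_X(-i) \otimes_k W_i$ for finite-dimensional $k$-vector spaces $W_i$, and the differential $\partial_i \colon \mathcal{F}_i \to \mathcal{F}_{i-1}$ is determined by its linear part $\rho_i \colon W_i \to V^* \otimes W_{i-1}$ (where $V^*=(P_X)_1$). Computing $\Ext^i_{P_X}(M,k)$ from $\Hom_{P_X}(\mathcal{F}_\bullet, k)$ gives $\Ext^i_{P_X}(M,k) = W_i^*$ concentrated in internal degree $-i$; hence as a graded vector space $N := \Ext_{P_X}(M,k) = \bigoplus_i W_i^*$ with $N_i := W_i^*$ placed in $C$-degree $i$.

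The crucial step is then to recognize the right $C$-module structure on $N$ in terms of the $\rho_i$. By the interpretation of the Yoneda product as lifting against a resolution, the right action of $v \in V = C_1$ giving a map $N_i = W_i^* \to N_{i+1} = W_{i+1}^*$ is the $k$-linear dual of the composition $W_{i+1} \xrightarrow{\rho_{i+1}} V^* \otimes W_i \xrightarrow{\langle v, \cdot \rangle \otimes \mathrm{id}} W_i$. The action of the higher degree elements $t_\ell \in C_2$ is then forced by the relations $e_i e_j + e_j e_i = \sum_\ell b_{\ell,i,j} t_\ell$ in $C$; these $t_\ell$'s do not appear explicitly in the formula for $d$, but are necessary for the relation $d^2=0$, which is exactly how the $q_\ell$'s are absorbed in the computation of $d^2$ above.

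Finally, under the isomorphism $\Hom_k(N_i, P_X) \cong W_i \otimes_k P_X(-i) = \mathcal{F}_i$, I would verify that $d$ matches $\partial_i$: for $w \in W_i$ corresponding to $\phi_w \in \Hom_k(N_i, P_X)$ defined by $\phi_w(n) = \langle n, w \rangle$, the formula $d(\phi_w)(n') = \sum_j x_j \langle n' e_j, w \rangle$ for $n' \in W_{i-1}^*$ equals (by the $C$-action identification) $\langle n', \rho_i(w) \rangle$ viewed as a linear form in $P_X$, which is exactly $\partial_i(w \otimes 1)$. The main obstacle will be the $C$-module identification in the previous paragraph, where one must carefully handle grading, left/right conventions, and signs in the Yoneda product; once these are settled, the remaining verifications are routine.
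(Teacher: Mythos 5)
Your proposal is correct and follows essentially the same route as the paper: identify the minimal linear $P_X$-free resolution with $\Hom_k(\Ext_{P_X}(M,k),P_X)$ by first pinning down the $C$-module structure on $\Ext_{P_X}(M,k)$ in terms of the linear part $\rho_i$ of the differential, and then checking that the induced differential $d$ matches $\partial_i$. The step you flag as "the main obstacle" is exactly what the paper isolates as Lemma~\ref{ext1action} (the action of $\Ext^1_{P_X}(k,k)$ via the connecting homomorphism of the extension $e_i$), and the remaining basis-level verification you sketch is the same computation the paper carries out.
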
 

\begin{example} The complex $\Hom_{k}(C,P_{X})$,
\[
0 \lTo C_{0}^{*}\otimes_{k} P_{X}\lTo C_{1}^{*}\otimes_{k} P_{X} \lTo C_{2}^{*}\otimes_{k} P_{X} \lTo \cdots
\]
is isomorphic to the $P_{X}$-free resolution of $k$.
\end{example}

Note that this statement may be deduced from \cite[Corollary~3.2(iiM)]{PolishchukPositselski}. Since this result plays a crucial role in the proof of Proposition~\ref{bundle from ulrich},  we give a proof below. For our proof we need an explicit description  of the action of $\Ext^1_{P_X}(k,k)$ on $\Ext_{P_X}(M,k)$.
 
 To avoid keeping track of grading shifts we formulate this in case of a finitely generated module $M$ over a 
 Noetherian local ring $R$ with maximal ideal $\gm$. Let $(x_1,\ldots,x_r)$ denote minimal generators of $\gm$, and let
$e_{i}\in \Ext^{1}_{R}(k,k)$ be the extension
\[
e_{i}\, \colon\ 0\lra k\rTo^{x_{i}} E_{i}\rTo k \lra 0,
\]
where $E_{i} = R/(x_{1},\ldots,x_{i-1},x_{i}^{2}, x_{i+1},\ldots x_{r})$.
Let  
\[
\FF\, \colon\ \cdots \rTo^{d} F_{j}\rTo^{d} \cdots \rTo^{d} F_{0}
\]
be the minimal free resolution of a finitely generated  $R$-module $M$. Since the resolution $F$ is minimal the differential $d(f)$ of an element $f \in F_{j+1}$ can be written in the form 
$d(f)= \sum_{i=1}^r x_i f_i$ for $f_i \in F_j$.

\begin{lemma}\label{ext1action} Let  $\alpha \in \Ext_R^{j}(M,k)$ be a class represented by a map 
$\alpha': F_{j}\to k$. The element $\alpha e_{i} \in \Ext_R^{j+1}(M,k)$ is then represented by the map $\beta_{i}$ with
$\beta_{i}(f)=\alpha'(f_i)$ for $f \in F_{j+1}$ with differential $d(f)=\sum_{i=1}^rx_i f_i$.
\end{lemma}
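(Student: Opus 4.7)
The plan is to identify the Yoneda product $\alpha \cdot e_i$ with the connecting homomorphism of the short exact sequence that represents $e_i$, and then to compute this connecting map directly on the level of the given minimal resolution $\FF$. Concretely, under the usual identification of $\Ext^1_R(k,k)$ with extension classes, Yoneda multiplication by $e_i$ agrees with the connecting map
\[
\delta_i : \Ext^j_R(M,k) \to \Ext^{j+1}_R(M,k)
\]
coming from $0 \to k \xrightarrow{x_i} E_i \to k \to 0$. I would first quickly verify this identification (and its sign) via the long exact sequence obtained by applying $\Hom_R(\FF,-)$ to the defining extension.

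Second, I would compute $\delta_i(\alpha)$ by the standard recipe: lift $\alpha' : F_j \to k$ through the surjection $E_i \twoheadrightarrow k$ to an $R$-linear map $\tilde\alpha : F_j \to E_i$; then $\tilde\alpha \circ d : F_{j+1} \to E_i$ factors through $\ker(E_i \to k) = k\cdot x_i \cong k$, and the resulting map $F_{j+1}\to k$ represents $\delta_i(\alpha)$. A canonical lift is to send each free generator $u$ of $F_j$ to $\alpha'(u) \cdot 1 \in E_i$, viewing $\alpha'(u) \in k$ as a constant in $R$ reduced modulo the relations of $E_i$.

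Third, the actual calculation is essentially a one-liner. For $f \in F_{j+1}$ with $d(f)=\sum_{\nu=1}^{r} x_\nu f_\nu$, $R$-linearity of $\tilde\alpha$ gives
\[
\tilde\alpha\bigl(d(f)\bigr) \;=\; \sum_{\nu=1}^{r} x_\nu\,\alpha'(f_\nu) \;=\; x_i\,\alpha'(f_i) \quad \text{in } E_i,
\]
because $x_\nu = 0$ in $E_i$ for all $\nu \neq i$. Under the identification $k \cdot x_i \cong k$ provided by the inclusion $k \hookrightarrow E_i,\ 1\mapsto x_i$, the right-hand side is $\alpha'(f_i)$, which is exactly $\beta_i(f)$.

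I do not expect any step to be genuinely hard: the calculation is forced by the definitions. The only point requiring care is the bookkeeping in the first step, namely matching the Yoneda composition convention with the connecting-homomorphism convention and checking that the inclusion $k \hookrightarrow E_i$ used to define $e_i$ is precisely the one implicit in $\delta_i$. Once this is pinned down, the explicit lift $\tilde\alpha$ and the computation above complete the proof.
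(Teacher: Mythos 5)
Your argument is correct and is essentially the same as the paper's: both compute the Yoneda product as the connecting homomorphism of $0\to k\xrightarrow{x_i} E_i\to k\to 0$ by lifting $\alpha'$ to $\tilde\alpha\colon F_j\to E_i$ and factoring $\tilde\alpha\circ d$ through $k\cdot x_i\cong k$. One small imprecision: $R$-linearity gives $\tilde\alpha(d(f))=\sum_\nu x_\nu\,\tilde\alpha(f_\nu)$, not immediately $\sum_\nu x_\nu\,\alpha'(f_\nu)$; you then need the observation that $x_\nu=0$ in $E_i$ for $\nu\neq i$ and that $x_i\,\mathfrak m_{E_i}=0$ (since $x_i^2=0$ there), so that $x_i\tilde\alpha(f_i)=x_i\alpha'(f_i)$ in $E_i$ — this also shows the formula is independent of the choice of decomposition $d(f)=\sum_\nu x_\nu f_\nu$, which the paper checks explicitly.
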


\begin{proof}
We compute the image of $\alpha$ under the connecting homomorphism $\delta$
\[ \Ext^j_R(M,E_i) \rTo \Ext^j_R(M,k) \rTo^{\delta} \Ext^{j+1}_R(M,k) \rTo \Ext^{j+1}_R(M,E_i)\]
associated to the sequence $e_i$ above.
Consider the diagram
\[
\begin{diagram}
 F_{j+1}&\rTo^{d}& F_{j}\\
 \dTo^{\beta_{i}} & &\dTo^{\alpha''}&\rdTo^{\alpha'}\\
 k&\rTo^{x_{i}}& E_{i}&\rTo& k
\end{diagram}
\]
where $\alpha''$ is a lift of $\alpha'$ to $E_{i}$. The composition $\alpha' \circ d$ is zero since $\alpha'(\gm F_j)=0$.
Thus $\alpha'' \circ d$ factors over the map
\[
\beta_{i}\colon\left\{\begin{array}{rcl}
 F_{j+1}& \lra & k\\ f &\longmapsto & \alpha'(f_i).
\end{array}\right.
\]
This map is well-defined, \emph{i.e.} independent of the choice of $f_i$. Indeed, if $d(f)=\sum_{i=1}^r x_i f'_i$ is a different choice
for the presentation of $d(f)$ then $x_i(f_i-f'_i) \in (x_{1},\ldots,x_{i-1}, x_{i+1},\ldots x_{r})F_j$ which maps to zero in $E_i$.
\end{proof}

\begin{proof}[Proof of Theorem \ref{linearResolutions}]
Let
\[
\FF\,\colon\ \cdots \rTo^{d} F_{j}\rTo^{d} \cdots \rTo^{d} F_{0}
\]
be the minimal graded free resolution of $M$ as a $P_X$-module. Then
\[
F_j \cong \overline F_j \tensor_k P_X
\]
where $\overline F_j \cong F_j/\gm F_j$.
If $M$ has a linear resolution then we claim that the isomorphisms
\[
\left\{
\begin{array}{ccc}
 F_{j}=\overline F_j \tensor P_X &  \xrightarrow{\ \cong\ } & \Hom_k(\Hom_{P_X}(F_j,k),P_X)\\
 \overline f \tensor p &  \longmapsto &  \{\varphi\colon \alpha \mapsto \alpha(f)\tensor p\}
\end{array}\right.
\]
induce an isomorphism of complexes, \emph{i.e.} we have to show that these maps commute with differentials of the two complexes. Let $b_1,\ldots,b_\ell$ be a $k$-basis of $\overline F_j$ with dual basis $b_1^*,\ldots,b_\ell^*$ of $\overline F_j^*=\Hom_k(\overline F_j,k)=\Hom_{P_X}(F_j,k)$.

Consider an element $f=\overline f \otimes 1 \in F_{j+1}$. Then
\[
d(f)= \sum_{i=1}^r\sum_{\nu=1}^\ell c_{i\nu} b_\nu \tensor x_i \hbox{ with } c_{i\nu} \in k
\]
and we can take $f_i=\sum_{\nu=1}^\ell c_{i\nu} b_\nu$ for the coefficient of $x_{i}$ as in Lemma \ref{ext1action}.
The map 
\[
\{\varphi\colon \alpha \mapsto \alpha(f)\} \in \Hom_k\Big(\Hom_{P_X}(F_{j+1},k),P_X\Big)
\]
maps to
\[
\{\alpha' \mapsto \sum_{i=1}^r x_i \varphi(\alpha'e_i) \}  \in \Hom_k\Big(\Hom_{P_X}(F_{j},k),P_X\Big)
\]
by the definition of the differential  of $\Hom_k(\Ext_{P_X}(M,k),P_X)$.
We have
 \begin{align*}
 \sum_{i=1}^r x_i \varphi(\alpha'e_i) &=  \sum_{i=1}^r x_i \alpha'(f_{i}) \qquad \text{(by Lemma \ref{ext1action})} \\
 & = \sum_{i=1}^r x_i \alpha'(\sum_{\nu=1}^\ell c_{i\nu} b_\nu)
 \end{align*}
 In particular, for $\alpha'=b_\mu^*$ we obtain
 $b_\mu^* \mapsto \sum_{i=1}^r c_{i\mu} x_i$.  
 These values coincide with the values of the image of 
\[
d(f)=\sum_{i=1}^r\sum_{\nu=1}^\ell c_{i\nu} b_\nu \tensor x_i
\]
in $\Hom_k\Big(\Hom_{P_X}(F_j,k),P_X\Big)$, since $b_\mu^*(\sum_{\nu=1}^\ell c_{i\nu} b_\nu)= c_{i\mu}$.
\end{proof}

\begin{corollary}\label{linRes} Let $N$ be a graded left $C$-module.
The complex $\Hom_k(N,P_X)$ is a resolution if and only if $N \cong \Ext_{P_X}(M,k)$ up to shift where $M$ is a $P_X$-module with a linear resolution.
\end{corollary}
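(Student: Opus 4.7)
The plan is to prove both implications, using Theorem \ref{linearResolutions} as the central tool in each direction.

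For the direction $(\Leftarrow)$: suppose $N \cong \Ext_{P_X}(M,k)(\ell)$ for some graded $P_X$-module $M$ with linear resolution and some twist $\ell$. Then by Theorem \ref{linearResolutions}, the complex $\Hom_k(N,P_X)$ is (up to the same twist) the minimal linear free resolution of $M$, and is therefore acyclic in positive homological degree.

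For the non-trivial direction $(\Rightarrow)$: suppose $\Hom_k(N,P_X)$ is acyclic. I first observe that it is a linear complex of free $P_X$-modules. Writing $N=\bigoplus_i N_i$ with each $N_i$ a finite-dimensional $k$-vector space, we have $\Hom_k(N_i, P_X) \cong N_i^{*}\otimes_k P_X$, and by construction in Section 2 the differential sends $\phi \in \Hom_k(N_i,P_X)$ to $n \mapsto \sum_j x_j\,\phi(n e_j)$, which has entries in the maximal ideal $(x_1,\dots,x_r)$. Let $M$ be the cokernel at the appropriate (rightmost) end of the complex, after a grading twist that places its generators in degree $0$. Acyclicity then says precisely that $\Hom_k(N,P_X)$ is a minimal linear free resolution of $M$.

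Now I apply Theorem \ref{linearResolutions} to $M$: its minimal linear free resolution can also be written as $\Hom_k(\Ext_{P_X}(M,k),P_X)$. Both this complex and $\Hom_k(N,P_X)$ are minimal linear resolutions of $M$, so by uniqueness of minimal free resolutions they are isomorphic as complexes of graded free $P_X$-modules. Dualizing degree-by-degree via $\Hom_{P_X}(-,k)$ we obtain $N_i \cong \Ext^i_{P_X}(M,k)$ as graded $k$-vector spaces. To upgrade this to an isomorphism of graded $C$-modules, I compare the two descriptions of the action of $e_i$: on $\Ext_{P_X}(M,k)$ it is given by the formula of Lemma \ref{ext1action} (reading off the coefficients of $x_i$ in the differential of the resolution), while on $N$ it is, by the very construction of the differential $d\phi = \sum_j x_j(\phi\circ e_j)$, the same formula. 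Hence the isomorphism of complexes identifies the two $C$-module structures, and $N \cong \Ext_{P_X}(M,k)$ up to twist.

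The main subtlety will be pinning down the twist precisely and verifying that the isomorphism of minimal resolutions induces an isomorphism of $C$-modules, not merely of graded vector spaces. Both points are dictated by the formulas: the twist is fixed by the degree at which $M$ is generated, and the $C$-action matches because the differential of $\Hom_k(N,P_X)$ is built from the right $C$-action on $N$ in exactly the way that Lemma \ref{ext1action} extracts the $C$-action on $\Ext_{P_X}(M,k)$ from the differential of the resolution.
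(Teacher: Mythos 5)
Your proof is correct and follows the same route as the paper: the forward direction is a direct application of Theorem \ref{linearResolutions}, and for the converse you take $M$ to be the module that the acyclic complex $\Hom_k(N,P_X)$ resolves. The paper's own proof is terser and leaves implicit the final verification that the resulting isomorphism of minimal resolutions identifies $N$ with $\Ext_{P_X}(M,k)$ \emph{as a $C$-module}; your explicit check via Lemma \ref{ext1action}, matching the two descriptions of the $e_i$-action, is exactly the right way to fill in that detail and is a worthwhile addition rather than a deviation.
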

\begin{proof}
If $N \cong \Ext_{P_X}(M,k)$ up to shift where $M$ is a $P_X$-module with a linear resolution then by Theorem 2.4
the resolution of $M$ is $\Hom_k(N,P_X)$. Conversely, if the  complex $\Hom_k(N,P_X)$ is a resolution, then since it is linear we may take the module it resolves to be $M$.
\end{proof}

\section{Pencils of quadrics and hyperelliptic curves}\label{pencil section}

We now specialize to the case of a smooth intersection of two quadrics in $\PP^{2g+1}$
with coordinate ring $P_X = k[x_{1}, \dots, x_{2g+2}]/(q_{1}, q_{2})$. 
To simplify notation we write $s,t$ instead of $t_1,t_2$. Let $q=q(s,t) = sq_{1}+tq_{2}$ and let
$C = \Cliff(q)$ denote the $\ZZ$-graded Clifford algebra of $q$, so that $T = k[s,t]\subset C$.

As in Reid's thesis \cite{ReidThesis} we note that none of the quadrics in the pencil can have corank 2: for, if one of the quadrics had singular locus $L$ of dimension at least 2, then $X$ would be singular
at $L\cap X$. Further, by
Bertini's Theorem the general linear combination of the two quadrics is non-singular outside the intersection. But if it were
singular at a point of the intersection, then the intersection would be singular there too. Thus we  may assume that
one of the quadrics has full rank, and it follows that the two quadrics can be simultaneously diagonalized (see \cite[XII, Paragraph 6, Theorem 7]{Gantmacher}). Thus we may assume that the bilinear form 
 $q(s,t) = sq_{1}+tq_{2}$ is given by a diagonal matrix 
\[
\begin{pmatrix}
f_1  & &   0\\
& \ddots &  \\  
0 &  & f_{2g+2}  & \\
\end{pmatrix}
\]
with entries that are pairwise coprime linear polynomials $f_i \in k[s,t]$.
As in Section~\ref{sec:vector bundles} we denote by $f=\prod f_i$, and use the notation $f_I= \prod_{i\in I} f_i$.

We write
\[
C=C^{\ev} \oplus C^{\odd}
\]
for the decomposition of the Clifford algebra into its even and odd parts. As a $T = k[s,t]$-module,  $C$ is free with basis $e_I$ and 
\begin{equation}\label{monomialProducts}
e_{I}e_{J} =\epsilon(I,J)  f_{I\cap J}\; e_{I\Delta J}.
\end{equation}
with  the sign $\epsilon(I,J) = (-1)^{\sum_{i \in I} |\{ j \in J | j<i \}|}$.

Since
\[
\sum_{i \in I} \left|\{ j \in \all \mid j<i \}\right|=\sum_{i \in I} (i-1)
\]
and 
\[
\sum_{j \in \all} |\{i \in I | i<j \}| =\sum_{i \in I} (2g+2-i) \equiv \sum_{i\in I} (i-1)  \mod 2
\]
for even $I$, we see that $e_\all$ lies in the center  of the even Clifford algebra. 
Because
\[
\sum_{i=1}^{2g+2} (i-1) =\binom{2g+2}{2} \equiv {g+1} \mod 2,
\]
the element $e_\all$ satisfies the equation
\[
e_\all^2=(-1)^{g+1}f.
\]
To adjust for the sign we take $y=(\sqrt{-1})^{g+1}e_\all$ as a generator of the center of the even Clifford algebra over $k[s,t]$ so that $y^2=f$.
Note that the formula above  for the central element $y$ is only correct in the case of diagonal quadrics; for the general case see
\cite[Satz 1]{Haag}. 

Furthermore, for any $I$,
\[
e_I e_\all = (-1)^{\sum_{i \in I} (i-1)} f_I \; e_{I^c}\quad \text{and} \quad e_{I^c} e_\all = (-1)^{\sum_{i \in I^c} (i-1)} f_{I^c} \; e_{I}.
\]
Note that the signs in the two formulas differ by $(-1)^{g+1}$.
Thus with $R_{E}=k[s,t,y]/(y^2-f)$ the coordinate ring of the corresponding hyperelliptic curve,
the $R_E$-submodule of $C$ generated by $e_I$ and $e_{I^c}$ coincides with $H^{0}_{*}(\sL_I)$ from Definition \ref{LI}. 

Notice however, that here, differently than in section 1, $R_{E}$ is $2\ZZ$-graded as a subring of $C^{\ev}$. Thus, since the elements of $C^{\odd}$ have odd degree, we have to twist by an odd number
to obtain a non-trivial sheaf of $\sO_{E}$-modules.

We define $\sC^{\ev}=\widetilde{C^{\ev}}$ and $\sC^{\odd}=\widetilde{C^{\odd}(1)}$.
Hence multiplication in $C$ gives a map
\[
C^{\odd}(1)\times C^{\odd}(1) \lra C^{\ev}(2)
\]
which sheafifies  to a map
\[
\sC^{\odd}\otimes_{\sO_{E}} \sC^{\odd} \lra \sC^{\ev} \otimes \pi^{*} \sO_{\PP^{1}}(1)=\sC^{\ev} \otimes \sH.
\]

In summary, we get the following statement.

\begin{proposition}\label{evenOddCliff} Let $y=(\sqrt{-1})^{g+1}e_{\{1,\ldots,2g+2\}}$. The element $y$ is in the center of $C^{\ev}$ and satisfies the equation
$y^{2} = f$, where $f=\prod_{i=1}^{2g+2} f_i.$ If we write $R_{E} = k[s,t,y]/(y^{2} - f)$ then
the even Clifford algebra decomposes as an $R_E$-module as
\[
C^{\ev} = \bigoplus_{\substack{\{I,I^c\} \\ |I|\ \text{even}}} H^{0}_{*}(\sL_I).
\]
The odd part of the Clifford algebra decomposes as a right $R_E$-module as
\[
C^{\odd}(1) = \bigoplus_{\substack{\{I,I^c\}  \\ |I|\ \text{odd}}} H^{0}_{*}(\sL_I).
\]
Moreover, 
\[
\sC^{\odd} \cong  \sO_E(p) \otimes \sC^{\ev}
\]
where $p$ is any ramification point of $\pi\colon E \to \PP^1$.
\end{proposition}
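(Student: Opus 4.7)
My strategy is to verify the four claims in sequence, using formula \eqref{monomialProducts} for the first three and Theorem~\ref{even and odd} for the last. For centrality of $y$, I compute $e_I e_\all = \epsilon(I,\all) f_I e_{I^c}$ and $e_\all e_I = \epsilon(\all,I) f_I e_{I^c}$; the parity identity already recorded in the text gives $\epsilon(I,\all) = \epsilon(\all,I)$ for $|I|$ even, so $e_\all$ commutes with every even basis element. For $y^2 = f$, I apply \eqref{monomialProducts} with $I = J = \all$ to get $e_\all^2 = \epsilon(\all,\all) f = (-1)^{\binom{2g+2}{2}} f = (-1)^{g+1} f$; the prefactor $(\sqrt{-1})^{2(g+1)} = (-1)^{g+1}$ then cancels this sign.

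For the $R_E$-module decomposition of $C^{ev}$, I partition the $T$-basis $\{e_I : |I|\text{ even}\}$ into pairs $\{e_I, e_{I^c}\}$. The formulas $e_I e_\all = \pm f_I e_{I^c}$ and $e_{I^c} e_\all = \pm f_{I^c} e_I$ show that right-multiplication by $y$ preserves $T e_I \oplus T e_{I^c}$ and, in the basis $(e_I, e_{I^c})$, is given by a matrix of shape $\begin{pmatrix} 0 & \pm f_{I^c} \\ \pm f_I & 0 \end{pmatrix}$. After absorbing the signs and the factor $(\sqrt{-1})^{g+1}$ into a rescaling of the generators, this matches the matrix factorization $\phi_I$ of Definition~\ref{LI}, so $T e_I \oplus T e_{I^c} \cong H^0_*(\sL_I)$ as $R_E$-modules. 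Summing over pairs yields the first decomposition; the argument for $C^{odd}$ is identical.

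For the final isomorphism $\sC^{odd} \cong \sO_E(p) \otimes \sC^{ev}$, I appeal to Theorem~\ref{even and odd}, which identifies the collection $\{\sL_I : |I|\text{ even}\}$ (indexed by pairs $\{I,I^c\}$) with all $2^{2g}$ two-torsion bundles on $E$, each appearing exactly once, and similarly identifies $\{\sL_I : |I|\text{ odd}\}$ with all $2^{2g}$ square roots of $\sH$. For any ramification point $p$, the relation $2p = \pi^*\pi(p)$ gives $\sO_E(p)^{\otimes 2} \cong \sH$, so $\sO_E(p)$ itself lies in the torsor of square roots of $\sH$. Tensoring with $\sO_E(p)$ is therefore a bijection between the two-torsion group of $\Pic(E)$ and its torsor of square roots of $\sH$, which matches the summands of $\sC^{ev}$ term-by-term with those of $\sC^{odd}$.

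The main obstacle is the sign and rescaling bookkeeping in the middle step: one must check that the matrix of the $y$-action on $T e_I \oplus T e_{I^c}$, with its signs $\epsilon(I,\all)$ and scalar $(\sqrt{-1})^{g+1}$, can be rescaled into the exact form of $\phi_I$ from Definition~\ref{LI}, rather than into some twist of it by a 2-torsion line bundle. I would verify the identification on the nose in the smallest case $g = 1$, then extract the general pattern from the same parity identity used in the centrality argument.
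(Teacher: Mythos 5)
Your proof is correct and follows essentially the same path as the paper, which carries out your first three computations in the text immediately preceding the proposition (centrality and $y^2=f$ from the parity of $\sum_{i\in I}(i-1)$, the $y$-stable submodule $T e_I\oplus T e_{I^c}$) and then invokes Theorem~\ref{even and odd} for the final claim. The worry you flag at the end is a non-issue: since $y^2=f$ forces the two scalar prefactors $c_1,c_2$ in your anti-diagonal matrix to satisfy $c_1c_2=1$, the diagonal change of basis $e_{I^c}\mapsto c_1\,e_{I^c}$ normalizes the $y$-action exactly to $\phi_I$ of Definition~\ref{LI}, so no residual $2$-torsion twist can occur.
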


\begin{proof}
This follows from Theorem~\ref{even and odd}. Note that  since  $p$ is a ramification point we have $\sO_{E}(2p) \cong \sH$
and the multiplication map $\sC^{\odd}\otimes \sC^{\odd} \to \sC^{\ev}\otimes \sH$ is compatible with the map
\[\sO_{E}(p) \tensor \sO_{E}(p) \lra \sO_{E}(2p) \cong \pi^{*}\sO_{\PP^{1}}(1)= \sH.\qedhere\]
\end{proof}

\begin{remark}\label{anticommuting} Notice  that $y$ and the elements of $C^{\odd}$ anti-commute by equation (\ref{monomialProducts}) applied to the case when $I$ is a singleton and
$J = \{1,\dots, 2g+2\}$.
\end{remark}

 The following result was proven in \cite{ReidThesis}.

\begin{lemma}\label{isotropic existence} Let $q_{1}, q_{2}$ be two quadratic forms on a $2g+2$-dimensional
vector space $V$ over $k$. The set of $g$-dimensional common isotropic subspaces of $q_{1}, q_{2}$
is non-empty and has dimension $\geq g$ locally at every point. 
\end{lemma}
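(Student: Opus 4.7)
The plan is to establish the two assertions of the lemma separately. The local dimension lower bound is the easier part and follows from Krull's principal ideal theorem. Realize the variety $F$ of common $g$-dimensional isotropic subspaces as a closed subscheme of the Grassmannian $G(g,V)$, which has dimension $g(2g+2-g)=g(g+2)$. The vanishing of $q_i|_U$ on a $g$-dimensional subspace $U$ is the vanishing of a symmetric bilinear form on $U$, which is $\binom{g+1}{2}$ scalar equations in the local coordinates on the Grassmannian. Hence $F$ is cut out locally by at most $2\binom{g+1}{2}=g(g+1)$ polynomial equations. The principal ideal theorem then gives
$$
\dim_U F \ \ge \ g(g+2) - g(g+1) \ = \ g
$$
at every point $U\in F$, which is the claimed local dimension bound.

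For non-emptiness I would invoke the classical identification of $F$ with the Jacobian $J(E)$ of the associated hyperelliptic curve $E$, going back to Miles Reid's thesis (and recalled in the introduction of this paper). Since $J(E)$ is a non-empty abelian variety of dimension $g$, so is $F$. An independent verification proceeds via Schubert calculus in the Chow ring $A^{*}(G(g,V))$: the cycle classes of the isotropic Grassmannians $\mathrm{OG}(g,2g+2,q_{i})\subset G(g,V)$ are both effective, and their intersection product is a non-trivial positive Schubert cycle, so $F = \mathrm{OG}(g,2g+2,q_{1})\cap \mathrm{OG}(g,2g+2,q_{2})$ is non-empty over the algebraically closed field $k$.

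The main obstacle is non-emptiness. The Krull-theoretic argument gives the local dimension bound with essentially no work, but it does not exclude the possibility that $F$ is empty; indeed a subscheme cut out by $c$ equations in a projective variety of dimension $\ge c$ can be empty in general. Establishing non-emptiness therefore requires either the Jacobian identification or a Schubert-calculus intersection-number computation, both of which go beyond the purely local principal ideal argument used for the dimension bound.
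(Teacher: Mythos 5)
Your proposal is essentially correct and, for the part that matters, follows the same route as the paper: realize $F$ as the zero locus on $\GG(g,V)$ of two sections of $\Sym^2(\sU^*)$ (equivalently, as cut out by $g(g+1)$ local equations, which is your Krull--PIT packaging of the same bound) and get non-emptiness from a nonzero top Chern class, which is what your Schubert-calculus ``independent verification'' amounts to. One caution: your first-choice argument for non-emptiness, the identification of $F$ with $\mathrm{Jac}(E)$ from Reid's thesis, is circular \emph{within this paper} --- Lemma~\ref{isotropic existence} is invoked in the proof of Corollary~\ref{ReidsTheorem} precisely to produce the base point $U_0$ needed to set up that identification. Citing Reid externally is not wrong per se, but it imports a much harder theorem to prove an elementary fact, so you should treat the Chern-class computation as the primary argument and drop the Jacobian as anything more than a remark. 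With that adjustment, your proof matches the paper's.
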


\begin{proof}
Let $\sU$ be the universal sub-bundle on the Grassmannian $\GG := \GG(g,V)$. The forms $q_{i}$ define
homomorphisms $\Sym^{2}V^*\otimes_{k}\sO_{\GG} \to \sO_{\GG}$, and thus, by restriction,
sections of $\Sym^{2}(\sU^{*})$. The set of $g$-dimensional common isotropic subspaces is the common zero locus of these two sections. Computing the Chern class we see that the locus 
is non empty and, since
\[
\dim \GG(g,V) - 2\rank\ \Sym^{2}(\sU^{*})  = g(g+2) - 2\binom{g+1}{2} = g,
\]
the inequality on dimensions follows.
\end{proof}

We  return to the situation at the beginning of Section 3, with 
\[
P = k[x_{1},\ldots,x_{2g+2}] = \Sym(V^{*}).
\]
Let $U\subset V$ be a $g$-dimensional isotropic linear subspace and denote by
$P_{U} = \Sym(U^{*}) = P/(U^{\perp})$ its coordinate ring,  
where $U^{\perp}\subset V^*$ is the space of linear equations of the isotropic space $U$.

\begin{proposition}\label{linear PU}
Let $G = ks\oplus kt \cong k^2$
be the space of parameters for the family of quadratic forms $sq_1+tq_2$.
Considered as a $P_X$-module, $P_U$ has a linear free resolution.
 Moreover
\begin{equation}\label{ev}
\Ext^{2p}_{P_X}(P_U,k) = \bigoplus_{i} \left(\Lambda^{2i}U^{\perp} \tensor_k (\Sym_{p-i} G)^*\right)^*
\end{equation}
and 
\begin{equation}\label{odd}
\Ext^{2p+1}_{P_X}(P_U,k) = \bigoplus_{i}  \left(\Lambda^{2i+1}U^{\perp} \tensor_k (\Sym_{p-i} G)^* \right)^*.
\end{equation}
\end{proposition}

\begin{proof}
The ideal $(U^{\perp})$  contains the 2-dimensional vector space  $G:= \langle q_{1},q_{2}\rangle$. This ideal
is generated by  a regular sequence of linear forms, and the $P$-free resolution of $P_U = P/(U^{\perp})$ 
is thus a  Koszul complex
with underlying free module $P\otimes \Lambda U^{\perp}$. Let $\gamma: G \to P_{1}\otimes U^\perp$
be a map of vector spaces such that the composition  of $\gamma$ with the multiplication map
\[
G \lra P_{1}\otimes_{k} U^\perp \lra P_{2}
\]
is the inclusion of $G$ in $P_{2}$.

By \cite[Theorem 4]{Tate}, the minimal $P_X$-free resolution of $P_U$
 is the differential graded $R$-algebra
\[
P_X\otimes_{k} \Lambda U^\perp \otimes \left(\Sym G^{*}\right)^{*}.
\]
Here $U^\perp$ has internal degree 1 and homological degree 1, while $G^*$ has internal degree 2 and homological degree 2, and the component of the differential $G = \left(\Sym^1 G^{*}\right)^{*}\to P_{X}\otimes_{k}U^\perp$ is induced by $\gamma$.
 
 This resolution is linear, and has
 degree $j$ term
\[
P_{X}\otimes_{k}\left(\bigoplus_{j=a+2b} (\Lambda^{a} U^{\perp})\tensor_k (\Sym_bG)^*\right).\qedhere
\]
\end{proof}

Let $T=\Sym G\cong k[s,t]$  and write
\[
F_{U}=\Ext^{\ev}_{P_X}(P_U,k) = \bigoplus_i \left(\left(\Lambda^{2i} U^{\perp}\right)^*\tensor_k T(-i)\right)
\]
regarded as a module over $\Ext_{P_{X}}^{\ev}(k,k) =  C^{\ev}$ \emph{via} the Yoneda pairing.

\begin{proposition}\label{degreeOfFu} The sheafification $\sF_{U}$ of $F_U$ as an $\sO_E$-module is a vector bundle of $\rank \sF_{U}= 2^{g}$ and degree
$\deg \sF_{U}= g2^{g-1}$ on $E$. Moreover,
\[
H^0_*(\sF_U) = Ext^{\ev}(P_U, k),
\]
and
\[
 H^0_*(\sF_U(p)) = Ext^{\odd}(P_U, k).
\]
\end{proposition}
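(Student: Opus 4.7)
The plan is to verify the hypotheses of Proposition~\ref{vectorBundlesOnE} for the pair $(F_U,y)$ and then read off rank and degree from its conclusion. By construction
\[
F_U=\bigoplus_{i\ge 0}(\Lambda^{2i}U^\perp)^*\otimes_k T(-i)
\]
is a free graded $T=k[s,t]$-module, and Proposition~\ref{evenOddCliff} identifies $y=(\sqrt{-1})^{g+1}e_\all$ as a central element of $C^{ev}$ with $y^2=f$. Since $F_U$ is a $C^{ev}$-module, multiplication by $y$ yields a degree-$(g+1)$ endomorphism $\phi\colon F_U\to F_U(g+1)$ satisfying $\phi^2=f$, i.e.\ a matrix factorization of $f$ over $k[s,t]$. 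Proposition~\ref{vectorBundlesOnE} then directly gives that $\sF_U$ is a vector bundle on $E$ whose pushforward $\pi_*\sF_U$ is the sheafification of $F_U$ regarded as a $T$-module.

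For the rank, I would use $\dim_k U^\perp=g+2$: the $T$-rank of $F_U$ is
\[
\sum_{i\ge 0}\binom{g+2}{2i}=2^{g+1},
\]
so $\rank(\pi_*\sF_U)=2^{g+1}$, and since $\pi$ is a double cover Proposition~\ref{vectorBundlesOnE} gives $\rank\sF_U=2^g$.

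For the degree, I pass to the grading on the center $R_E$ in which $s,t$ have degree $1$; in this grading each $T(-i)$ sheafifies to $\sO_{\PP^1}(-i)$ and therefore
\[
\pi_*\sF_U\;\cong\;\bigoplus_{i}(\Lambda^{2i}U^\perp)^*\otimes\sO_{\PP^1}(-i)
\]
has degree $-\sum_{i}i\binom{g+2}{2i}$. The identity $\sum_{i\ge 0}i\binom{n}{2i}=n\cdot 2^{n-3}$, obtained by comparing $\sum_k k\binom{n}{k}x^k=nx(1+x)^{n-1}$ at $x=\pm 1$, yields $\deg(\pi_*\sF_U)=-(g+2)\cdot 2^{g-1}$ for $n=g+2$. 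Feeding this into the formula $\deg\sF_U=\deg(\pi_*\sF_U)+\rank(\sF_U)(g+1)$ from Proposition~\ref{vectorBundlesOnE} gives
\[
\deg\sF_U=-(g+2)2^{g-1}+2^g(g+1)=g\cdot 2^{g-1}.
\]

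The delicate step is keeping the grading conventions aligned. The Clifford algebra $C$ puts $V$ in degree $1$ and the parameter ring $T$ in degree $2$, while $R_E$ uses the half-grading in which $s,t$ have degree $1$ and $y$ has degree $g+1$. It is only under the latter convention that $T(-i)$ corresponds to $\sO_{\PP^1}(-i)$ on $\PP^1$ and that $y$ has the degree $g+1$ required by Proposition~\ref{vectorBundlesOnE}. Ensuring that the shift $T(-i)$ in the defining decomposition of $F_U$ is interpreted in the $R_E$-grading, rather than in the Clifford grading, is the point on which both counts hinge.
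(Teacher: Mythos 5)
Your proof is correct and follows essentially the same route as the paper: compute the $T$-rank and $T$-degree of $F_U$ from the decomposition $F_U=\bigoplus_i(\Lambda^{2i}U^\perp)^*\otimes T(-i)$, then transfer to $E$ via the rank and degree formulas of Proposition~\ref{vectorBundlesOnE}. Your explicit verification of the hypotheses of that proposition and your flagging of the grading-convention issue (Clifford degree versus $R_E$-degree) are helpful additions, but the underlying computation is the same.
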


\begin{proof}
It follows from the formulas above that the sheafification $\sF_{U}$ of $F_U$ as an $\sO_E$-module is a vector bundle of rank equal to
$\dim_k(\Lambda^{\ev}U^{\perp}) /2 = 2^{g}$. Moreover
\[
\deg \pi_{*} \sF_{U} = -\sum_{i\ge 0} i\binom{g+2}{2i} = -(g+2)2^{g-1}.
\]
By  Proposition \ref{vectorBundlesOnE},  $\sF_{U}$ has degree
\[
\deg \sF_{U}=(g+1)\rank \sF_{U}+\deg \pi_{*} \sF_{U}=(g+1)2^{g}-(g+2)2^{g-1}=g2^{g-1}.
\]

The first displayed formula is immediate from the definition of $F_U$, while the second follows
from the equality $\sC^{\odd} = \sC^{\ev}(p)$.
\end{proof}

\begin{theorem} The endomorphism bundle of $\sF_{U}$
is isomorphic as an $\sO_E$-algebra to the sheafified  even Clifford algebra $\sC^{\ev}$; that is,
\[
\sEnd_{E}(\sF_{U}) \cong \sC^{\ev}.
\]
\end{theorem}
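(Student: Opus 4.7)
The plan is to construct a natural $\sO_E$-algebra map $\mu\colon \sC^{ev} \to \sEnd_{E}(\sF_U)$ and to prove it is an isomorphism by combining a rank count, an analysis at the generic point of $E$, and a degree comparison. The map $\mu$ arises from the fact that $F_U = \Ext^{ev}_{P_X}(P_U,k)$ is a right $C$-module, where $C \cong \Ext_{P_X}(k,k)$ by Koszul duality. Since $R_E$ lies in the center of $C^{ev}$ by Proposition~\ref{evenOddCliff}, the $C^{ev}$-action on $F_U$ is $R_E$-linear; sheafifying over $E$ and using the canonical anti-involution of a Clifford algebra to switch sides produces $\mu$. Both source and target are locally free of rank $2^{2g}$ on $E$: the decomposition in Proposition~\ref{evenOddCliff} gives $\sC^{ev} = \bigoplus \sL_I$ summed over the $2^{2g}$ unordered pairs $\{I,I^c\}$ with $|I|$ even, while Proposition~\ref{degreeOfFu} gives $\rank \sF_U = 2^g$ and hence $\rank \sEnd_E(\sF_U) = 2^{2g}$.

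Next I would argue that $\mu$ is an isomorphism at the generic point $\eta$ of $E$. The pencil form $sq_1+tq_2$ is non-degenerate over $k(s,t)$, its discriminant equals $f$ up to a scalar, and consequently the center of $\sC^{ev}\otimes_{k[s,t]} k(s,t)$ is precisely $k(E) = k(s,t)[y]/(y^2-f)$. Hence $\sC^{ev}_\eta$ is a central simple $k(E)$-algebra of dimension $2^{2g}$. Since $k(E)$ has transcendence degree one over the algebraically closed field $k$, Tsen's theorem gives $\mathrm{Br}(k(E))=0$, so $\sC^{ev}_\eta \cong \mathrm{Mat}_{2^g}(k(E))$ splits. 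The map $\mu_\eta$ sends $1$ to the identity, so its kernel is a proper two-sided ideal of the simple algebra $\sC^{ev}_\eta$ and must vanish; equality of dimensions then makes $\mu_\eta$ an isomorphism.

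To globalize, I note that $\sC^{ev}$ is locally free and therefore torsion-free on the smooth curve $E$, so the injectivity of $\mu_\eta$ upgrades to injectivity of $\mu$ everywhere. The cokernel is then a torsion sheaf whose length equals $\deg \sEnd_E(\sF_U) - \deg \sC^{ev}$. The first degree vanishes because $\sEnd_E(\sF_U) \cong \sF_U^* \otimes \sF_U$ always has degree zero. The second vanishes because each summand $\sL_I$ with $|I|$ even is two-torsion by Theorem~\ref{even and odd}, and hence has degree $0$. So the cokernel is zero and $\mu$ is an isomorphism.

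The main obstacle I anticipate is the generic-point analysis, in two respects: first, pinning down the center of $\sC^{ev}$ exactly as $R_E$ (not some proper subring or extension), which uses that the discriminant of the diagonalized pencil is precisely $\prod f_i = f$; and second, ensuring $\sC^{ev}_\eta$ is simple rather than merely semi-simple, which is controlled by non-degeneracy of the generic form in the pencil. Once these are secured, Tsen's theorem together with the rank and degree counts assemble into the isomorphism almost formally.
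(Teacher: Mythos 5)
Your proof is correct and follows essentially the same three-step strategy as the paper: use the $R_E$-$C^{ev}$-bimodule structure of $F_U$ to define the algebra map, deduce injectivity from simplicity of a general fiber (you work at the generic point, the paper at general closed points of $E$, but via the same central-simple-algebra input on the even Clifford algebra), and conclude by comparing ranks and degrees, both of which vanish since the $\sL_I$ with $|I|$ even are $2$-torsion. The appeal to Tsen's theorem is correct but superfluous — simplicity of $\sC^{ev}_\eta$ already forces $\ker\mu_\eta = 0$, and the splitting $\sC^{ev}_\eta \cong \mathrm{Mat}_{2^g}(k(E))$ is never used in your argument.
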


\begin{proof}
Let $(a,b)\in \PP^{1}$ be a point that is not a branch point of $\pi$. 
The algebra $\pi_{*} \sC^{\ev}$ is a sheaf of algebras whose fiber at  $(a,b)$ is isomorphic to the product
of the fibers of $\sC^{\ev}$ at the two preimages of $(a,b)$ in $E$. On the other hand, the fiber of $\pi_{*} \sC^{\ev}$
is the even Clifford algebra
of the nonsingular quadratic form $a{q_{1}}+bq_{2}$. Thus it is a semisimple algebra with 2-dimensional center generated
over $k$ by $y$. Since we have assumed that $k$ is algebraically closed, this center is $k\times k$. The corresponding decomposition of the push forward of $\sC^{\ev}$ as a direct product is the unique decomposition as the product of two algebras. Thus
the fibers of $\sC^{\ev}$ at points of $E$ other than the ramification points are simple algebras by \cite[Theorem 4.13]{Jacobson}.

Since  $F_U$ is an $R_E-C^{\ev}$ bimodule we have  an $\sO_E$-algebra homomorphism 
\[
\phi: \sC^{\ev} \lra \sEnd_{E}(\sF_{U}).
\]
Since the general fiber of $\sC^{\ev}$ is simple, the kernel of this homomorphism must be torsion, and thus 0.
The source and target of $\phi$ are vector bundles of the same rank. By Proposition~\ref{evenOddCliff} the sheaf $\sC^{\ev}$ is a
sum of the degree 0 line bundles $\sL_I$, and since the endomorphism bundle also has degree 0, the map is
an~isomorphism.
\end{proof}

\begin{corollary}[Morita equivalence, see \protect{\cite[Chapter 2]{Bass}}]\label{morita} 

The $\sO_E- \sC^{\ev}$ bimodule $\sF_U$ defines an equivalence of module categories
\[
\left\{\begin{array}{ccc}
\sO_E -mod  &\longleftrightarrow &mod -\sC^{\ev}\\
 \sL  & \longmapsto &\sL \otimes_{\sO_E} \sF_U \\
\sG\otimes_{\sC^{\ev}} \sF_U^* & \longleftmapsto & \sG  \\
\end{array}\right.
\]
where $\sF_U^* = \sH om_{\sO_E}(\sF_U, \sO_E)$.
\end{corollary}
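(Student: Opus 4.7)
The plan is to reduce the corollary to the classical affine Morita theorem \cite[Chapter 2]{Bass} by verifying its hypotheses on small affine opens of $E$, and then to globalize using that all the relevant data---the $(\sO_E,\sC^{ev})$-bimodule structure on $\sF_U$ and the identification $\sEnd_{\sO_E}(\sF_U)\cong\sC^{ev}$ from the preceding theorem---is already defined globally.

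The hypotheses are straightforward to check. By Proposition~\ref{degreeOfFu}, $\sF_U$ is a vector bundle of positive rank $2^g$ on $E$, so on any affine open $\Spec A\subset E$ over which $\sF_U$ trivializes one has $\sF_U|_{\Spec A}\cong A^{2^g}$, which is an $A$-progenerator (finitely generated projective, admitting $A$ as a direct summand). Restricting the identification from the previous theorem yields $\sEnd_A(\sF_U|_{\Spec A})\cong\sC^{ev}|_{\Spec A}$. The affine Morita theorem then produces mutually inverse equivalences $-\otimes_A\sF_U$ and $-\otimes_{\sC^{ev}}\sF_U^*$ between left $A$-modules and right $\sC^{ev}|_{\Spec A}$-modules, with unit and counit given by the standard evaluation and coevaluation maps of the Morita context associated to $\sF_U$.

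To pass from the local picture to the global one, I would observe that the two functors $-\otimes_{\sO_E}\sF_U$ and $-\otimes_{\sC^{ev}}\sF_U^*$ make sense on the global categories by means of the globally defined bimodule structures on $\sF_U$ and on $\sF_U^*=\sHom_{\sO_E}(\sF_U,\sO_E)$. The natural evaluation morphism $\sF_U\otimes_{\sC^{ev}}\sF_U^*\to\sO_E$ and coevaluation morphism $\sO_E\to\sF_U^*\otimes_{\sO_E}\sF_U$ (the latter landing in $\sEnd_{\sO_E}(\sF_U)\cong\sC^{ev}$) are morphisms of global sheaves whose restrictions to a trivializing open are precisely the Morita isomorphisms from the affine case. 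Since being an isomorphism is a stalk-local property, these global morphisms are themselves isomorphisms, yielding the claimed equivalence of categories.

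The substantive input has already been provided by the preceding theorem in the form of the isomorphism $\sEnd_{\sO_E}(\sF_U)\cong\sC^{ev}$; granting it, the corollary is a formal consequence. Accordingly, I do not expect any genuine obstacle, only the bookkeeping needed to sheafify the Morita context, which goes through uneventfully because $\sF_U$ and $\sC^{ev}$ are both locally free of finite rank over $\sO_E$.
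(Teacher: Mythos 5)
Your proof is correct, and the paper itself gives no proof of this corollary beyond citing Bass; your argument of verifying the affine Morita hypotheses locally on trivializing opens and then globalizing the unit and counit morphisms by checking the isomorphism property on stalks is exactly the standard sheafification that the citation implicitly invokes. The key input, as you note, is the preceding identification $\sEnd_{\sO_E}(\sF_U)\cong\sC^{ev}$, after which the corollary is a formal consequence.
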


\begin{corollary}[Reid, 1972 \cite{ReidThesis}]\label{ReidsTheorem} 
Let $X=Q_1\cap Q_2 \subset \PP^{2g+1}$ be a smooth intersection of two quadrics and let
$E$ be the corresponding hyperelliptic curve. 
Let $U_{0}\subset V$ be a $g$-dimensional linear subspace such that $\PP(U_{0}^{*})\subset X$. Then the map
\[
\varphi\colon \left\{\begin{array}{ccl}
\{ U \in \GG(g,V) \mid \PP(U^{*}) \subset X \} & \lra & \Pic^0(E)\\
U &\longmapsto & \sF_U \otimes _{\sC^{\ev} }\sF_{U_{0}}^{*}
\end{array}\right.
\]
is a bijection. If the ground field $k$ has characteristic 0, it is an isomorphism.
\end{corollary}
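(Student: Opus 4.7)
My plan follows three steps: (I) verify the image of $\varphi$ lies in $\Pic^0(E)$ via the Morita equivalence and a degree computation; (II) prove injectivity of $\varphi$ by reconstructing $U$ from $\sF_U$; and (III) combine a dimension count with smoothness of both sides to conclude $\varphi$ is an isomorphism.

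For step (I), by Corollary~\ref{morita} the Morita equivalence sends a right $\sC^{ev}$-module $\sG$ to the $\sO_E$-module $\sG \otimes_{\sC^{ev}} \sF_{U_0}^*$, with inverse $\sL \mapsto \sL \otimes_{\sO_E} \sF_{U_0}$. Since $\sF_U$ and $\sF_{U_0}$ both have $\sO_E$-rank $2^g$ by Proposition~\ref{degreeOfFu}, the object $\sL_U := \sF_U \otimes_{\sC^{ev}} \sF_{U_0}^*$ has rank one. From $\sL_U \otimes_{\sO_E} \sF_{U_0} \cong \sF_U$ and the tensor-product degree formula, $2^g \deg \sL_U + \deg \sF_{U_0} = \deg \sF_U$; with $\deg \sF_U = \deg \sF_{U_0} = g\,2^{g-1}$ from Proposition~\ref{degreeOfFu}, we conclude $\deg \sL_U = 0$.

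For step (II), suppose $\varphi(U) \cong \varphi(U')$. The inverse Morita functor produces $\sF_U \cong \sF_{U'}$ as right $\sC^{ev}$-modules, and $F_U \cong F_{U'}$ as graded $C^{ev}$-modules after taking $H^0_*$. By Proposition~\ref{linear PU}, $F_U \cong \bigoplus_i (\Lambda^{2i} U^\perp)^* \otimes T(-i)$ as a graded $T$-module, so the minimal $T$-generators in position $i=1$ recover $(\Lambda^2 U^\perp)^*$. The Clifford action of $\Lambda^2 V \subset C^{ev}$ on $F_U$ pins down the embedding $(\Lambda^2 U^\perp)^* \hookrightarrow (\Lambda^2 V^*)^*$, so $\Lambda^2 U^\perp$ is determined as a subspace of $\Lambda^2 V^*$; Pl\"ucker relations then recover $U^\perp$ and hence $U$.

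For step (III), Lemma~\ref{isotropic existence} asserts $\mathcal{M} := \{U \in \GG(g,V) : \PP(U^*) \subset X\}$ has dimension $\geq g$ at every point. Combined with the injectivity of $\varphi$ into the $g$-dimensional $\Pic^0(E)$, we get $\dim \mathcal{M} = g$, so $\mathcal{M}$ is a local complete intersection in $\GG(g,V)$, hence Cohen-Macaulay; smoothness follows from the smoothness of $X$ via the Jacobian criterion applied to the two sections of $\Sym^2 \sU^*$ cutting out $\mathcal{M}$. Properness of $\varphi$ then forces its image to be closed of full dimension $g$ in the irreducible abelian variety $\Pic^0(E)$, so $\varphi$ is surjective; a bijective proper morphism between smooth projective varieties of the same dimension is an isomorphism by Zariski's Main Theorem.

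I expect step (II) to be the main obstacle: the $C^{ev}$-module structure of $F_U$ alone must encode enough of the embedding $U^\perp \subset V^*$ to determine $U$, without direct access to the odd part of the full Clifford module. Making the Clifford action of $\Lambda^2 V$ precise, and separating $(\Lambda^2 U^\perp)^*$ from other components of $F_U$ in low $T$-degree, is where the bookkeeping becomes technical.
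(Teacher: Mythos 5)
Your steps (I) and (III) track the paper's proof closely: step (I) is the same Morita-plus-degree argument, and step (III) spells out in detail what the paper compresses into a single sentence (source projective, target connected smooth of the same dimension, hence the injection is an isomorphism). Step (II), the injectivity, is where you genuinely diverge. The paper forms the full Clifford module $N := H^0_*(\sF_U \otimes_{\sC^{ev}} \sC) \cong \Ext_{P_X}(P_U,k)$ and invokes Theorem~\ref{linearResolutions}: the complex $\Hom_k(N,P_X)$ is the minimal free $P_X$-resolution of $P_U$, so the isomorphism class of $\sF_U$ as a $\sC^{ev}$-module determines $P_U$, and hence $U$, purely formally. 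You instead try to read $\Lambda^2 U^\perp$ directly off the low-degree piece of $F_U$ together with the $\Lambda^2 V \subset C^{ev}$-action. This can probably be made to work, since $F_U$ is cyclic and generated in degree $0$, so the quotient of $(F_U)_2$ by the image of $T_1$ is $(\Lambda^2 U^\perp)^*$ and the $\Lambda^2 V$-action on the generator exhibits it as a quotient of $\Lambda^2 V$; but note that $(\Lambda^2 U^\perp)^*$ is a \emph{quotient} of $\Lambda^2 V = (\Lambda^2 V^*)^*$, not a subspace, so your claimed ``embedding $(\Lambda^2 U^\perp)^* \hookrightarrow (\Lambda^2 V^*)^*$'' has the arrow backwards; the correct object is the surjection $\Lambda^2 V \twoheadrightarrow (\Lambda^2 U^\perp)^*$, dual to $\Lambda^2 U^\perp \hookrightarrow \Lambda^2 V^*$, from whose kernel you then recover $U^\perp$. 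More substantively, pinning down that surjection requires control over the Eisenbud--Shamash higher homotopies that define the $C^{ev}$-action on $\Ext_{P_X}(P_U,k)$, and Theorem~\ref{linearResolutions} exists in this paper precisely to bypass such explicit bookkeeping. So your route is more hands-on but riskier where you yourself flag the difficulty; the paper's detour through the full $C$-module $N$ and the linear-resolution correspondence buys you the reconstruction of $U$ with no computation of individual graded pieces.
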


\begin{proof}  
By Lemma \ref{isotropic existence},  a space $U_{0}$ of dimension $g$ such that $\PP(U_{0}^{*})\subset X$ exists.
We claim that $\sF_U \otimes _{\sC^{\ev} }\sF_{U_{0}}^{*}$ is an element of $\Pic^{0}(E)$.
We know by Corollary~\ref{morita} that $\sF_{U_{0}}$ and $\sF_U$  both define Morita equivalences.
Hence $\sL:= \sF_U \otimes _{\sC^{\ev} }\sF_{U_{0}}^{*}$ must be an invertible object in $\sO_E-mod$, hence a line bundle.
This line bundle  has degree 0 since $\sF_U \cong \sL \otimes \sF_{U_{0}}$ and both vector bundles have the same degree.

The map $\varphi$
 is injective because
we can recover $U$ from $\sF_{U}\cong \sL \otimes \sF_{U_{0}}$ as follows: 
by Proposition~\ref{bundle from ulrich} (3) below, we can recover the $C=\Ext_{P_X}(k,k)$-module $\Ext_{P_X}(P_{U},k)$ from $\sF_{U}$. 
The free resolution of $P_{U}$, hence $U^{\perp}$, can be obtained from $\Ext_{P_X}(P_{U},k)$ by Theorem \ref{linearResolutions}.

Since the source and target  of $\varphi$ are projective and the target is connected, smooth, and of the same
dimension as the source, the map is a surjective, hence a bijection. 
In case the ground field $k$ has characteristic 0 $\varphi$ is thus an isomorphism. If $k$ has positive characteristic it could be a purely inseperable morphism.
 Miles Reid proved in \cite[Theorem 4.8]{ReidThesis} that $\{ U  \in \GG(g,V) \mid \PP(U^{*}) \subset X \} $ and $\Pic^0(E)$ are isomorphic for arbitrary characteristic.
\end{proof}

\begin{remark} Our Macaulay2 package \cite{EKS} computes the action of $\Pic^0(E)$ on the space of maximal isotropic subspaces \[
\GG(g,X)=\left\{ U \in \GG(g,V) | \PP(U^{*}) \subset X \right\}.
\]
For a different approach to the group law on $\Pic^0(E)$ in terms of $\GG(g,X)$ see~\cite{Donagi}.
\end{remark}

\section{Tate resolutions of $P_X$-modules from Clifford modules}\label{sec:Tate}

The constructions in this section are inspired by the theory of Cohen--Macaulay approximations of Auslander and Buchweitz \cite{Auslander-Buchweitz} and the construction of Tate resolutions as in \cite{EisenbudSchreyer}.  Let $R$ be a Noetherian local or graded Gorenstein ring, and let $M$ be a finitely generated $R$-module. Let $F$ be the minimal $R$-free resolution of M:
\[
 0 \lTo M \lTo F_0 \lTo F_1 \lTo F_2 \lTo  \cdots .
\]

We will use the notation $N^*=\Hom_{R}(N,R)$ for the dual of an $R$-module $N$. If $N$ is a maximal Cohen--Macaulay (MCM) module, that is, an $R$-module of depth $\dim R$, then  we have  $(N^*)^* \cong N$, because $R$ is Gorenstein.

The Tate resolution associated to $M$ is a doubly infinite exact complex of free $R$-modules obtained as follows:
The \supth{i} syzygy module $M_i=\ker\left(F_{i-1} \to F_{i-2}\right)$ is an MCM module when $i>\dim R$,
so \mbox{$M_i^* = \ker\left(F_i^* \to F_{i+1}^*\right)$} is also an MCM module.

Choose an integer $i> \dim R$, and let 
\[
\cdots \rTo G_{i-2}\rTo G_{i-1} \rTo M_i^* \rTo 0
\]
be a minimal free resolution of $M_i^*$.
The Tate resolution $\TT(M)$ of $M$ is obtained by splicing the dual complex $G^*$ with the complex $F_i \lTo F_{i+1} \lTo \cdots $ to a doubly  infinite complex  
\[
\TT(M)\,\colon \  \cdots \lTo G_{i-2}^* \lTo G_{i-1}^* \lTo F_{i}  \lTo F_{i+1}  \lTo \cdots
\]
of free graded $R$-modules. This is an exact complex because both $M_i=\ker\left(F_{i-1} \to F_{i-2}\right)$ and \mbox{$M_i^*\cong \ker\left(F_i^* \to F_{i+1}^*\right)$} are MCM modules. Up to isomorphism this complex is independent of the choice of $i$ and the choice of the minimal free resolutions. The dual complex $\TT(M)^*$ is exact as well.

\begin{example}\label{matrix factorization example} In case  of a hypersurface ring $R=P/(f)$ the Tate resolutions are the double infinite periodic complexes 
\[
\cdots \lTo^{\overline \phi} R^{n} \lTo^{\overline \psi}  R^{n}  \lTo^{\overline \phi}  R^{n} \lTo^{\overline \psi} \cdots
\]
obtained from matrix factorizations $(\phi,\psi)$ of $f$, \emph{cf.}~\cite{Eisenbud80}.
\end{example}

\begin{remark}
 Auslander and Buchweitz~\cite{Auslander-Buchweitz} used Tate resolutions to define the MCM approximation 
 of $M$ for arbitrary Cohen--Macaulay rings. When $R$ is Gorenstein, as in our case, we set $M^{\es} = \coker\left(G_1^* \to G_0^*\right)$, the \emph{essential MCM approximation}, so that $M^{\es}$ is an MCM over $R$. By \cite{Auslander-Buchweitz} there is an induced map $M^{\es}\to M$ and  the modules $M$ and $M^{\es}$ have free resolutions that differ
 in only finitely many terms: If $R^n\to M$ is a map from a graded free $P_X$ module such that 
\[
0 \lTo M \lTo M^{\es} \oplus R^n 
\]
is a surjection, then the kernel of this homomorphism has a finite free resolution of length ${\rm codepth}\; M -1$. 
Auslander--Buchweitz define this homomorphism to be the MCM approximation of $M$ 
if $n$ is taken to be~minimal. 
\end{remark}

\begin{proposition}\label{Tate of linear CM} Let $P_{X}=P/(q_{1},\ldots,q_{c})$ be the homogeneous coordinate ring of a complete intersection of quadrics. Let $M$ be a $P_{X}$-module which has a linear resolution as a $P$-module. Then $\Ext_{P_{X}}(M,k)$ is a $C=\Ext_{P_{X}}(k,k)$-module which is free as a $k[t_{1},\ldots,t_{c}]$-module. If moreover $M$ is a Cohen--Macaulay $P_{X}$-module of codimension $\ell$ then the Tate resolution of $M$ has the form
\[
\begin{diagram}
 \cdots & \lTo &P_X^{b_{-2}}(3)&\lTo& P_X^{b_{-1}}(2)&\lTo &P_X^{b_{0}}(1)& \lTo &\cdots&\lTo & P_{X}^{b_{\ell}}(-\ell+2)&\lTo &0\\
 &&&&\uTo^{\phi_{0}}&&\uTo^{\phi_{1}}&&&&\uTo^{\phi_\ell}\\
& &0 &\lTo&P_X^{a_0}&\lTo&P_X^{a_1}(-1)&\lTo&\cdots&\lTo&P_X^{a_\ell}(-\ell)&\lTo&\cdots
\end{diagram}
\]
with $b_{\ell-i }=a_i$
with an overlap of length $\ell$. The bottom row, which is a quotient complex, is the Eisenbud--Shamash resolution of $M$ as a $P_{X}$-module, and the top row, a subcomplex, is its $P_{X}$ dual.
\end{proposition}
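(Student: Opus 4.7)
\emph{Step 1: linear strand and $T$-freeness.} Let $F^P_\bullet$ be the minimal linear $P$-free resolution of $M$. The Eisenbud-Shamash construction produces a minimal linear $P_X$-free resolution $\tilde F_\bullet$ with
$$
\tilde F_n \;=\; \bigoplus_{a+2b=n} F^P_a\otimes_k(\Sym_b G)^*,\qquad G=kt_1\oplus\cdots\oplus kt_c,
$$
whose differential is twisted by the defining quadrics $q_1,\ldots,q_c$. Dualizing,
$$
\Ext^n_{P_X}(M,k)\;=\;\bigoplus_{a+2b=n}\Ext^a_P(M,k)\otimes_k\Sym_b G,
$$
on which $T=\Sym G=k[t_1,\ldots,t_c]$ acts through the second factor; hence $\Ext_{P_X}(M,k)\cong\Ext_P(M,k)\otimes_k T$ is free over $T$, and $\tilde F_\bullet$ is the linear (bottom) strand with $a_i=\dim_k\Ext^i_{P_X}(M,k)$.

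\emph{Step 2: quadratic strand.} The quadratic (top) strand is defined to be $\Hom_{P_X}(\tilde F_\bullet,P_X)$, the $P_X$-dual of the linear strand, with appropriate shifts making its differentials of internal degree $2$. Since $M$ is CM of codimension $\ell$ over the Gorenstein ring $P_X$, the cohomology of this dual complex is concentrated in a single position ($\ell$), where it equals the Gorenstein dual $M^\vee:=\Ext^\ell_{P_X}(M,\omega_{P_X})$ up to twist. The rank identity $b_{\ell-i}=a_i$ is then immediate: dualizing the free module $\tilde F_i$ of rank $a_i$ yields a free module of the same rank, re-indexed by $\ell-i$ under the Gorenstein shift.

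\emph{Step 3: splicing, and the main obstacle.} To combine the two strands into a single exact doubly infinite complex I would exploit the fact that $\Omega^\ell M$ is MCM, because depth rises by one per syzygy until it reaches $\dim P_X$. The standard Tate resolution of $\Omega^\ell M$ is the splice of its linear $P_X$-resolution with the $P_X$-dual of the linear $P_X$-resolution of $(\Omega^\ell M)^*$; its forward tail coincides with the shifted tail of $\tilde F_\bullet$, and its backward tail coincides with the truncation of the quadratic strand at position $\ell$. Comparison maps $\phi_0,\ldots,\phi_\ell$ arising from the Auslander-Buchweitz MCM approximation $M^{es}\to M$ then provide the vertical arrows in the diagram, and the total (mapping-cone) complex is $\TT(M)$, with overlap precisely $\ell$ because that is the codepth by which $M$ fails to be MCM. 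The hardest part will be writing the $\phi_i$ down explicitly and verifying that the resulting mapping cone is acyclic; this reduces to a $d^2=0$ identity of the type appearing in Theorem~\ref{linearResolutions}, encoding the Clifford relations among the $e_i$ and the $t_\nu$.
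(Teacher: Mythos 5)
Your Steps 1 and 2 track the paper closely: the Eisenbud--Shamash resolution over $P_X$ is minimal and linear because all the higher homotopies coming from forms of degree $2$ are linear maps, which simultaneously yields the bottom strand and shows that $\Ext_{P_X}(M,k)$ is free over $T=k[t_1,\ldots,t_c]$; and Gorenstein duality for the codimension-$\ell$ CM module $M$ (that $\Ext^i_{P_X}(M,P_X)=0$ for $i\neq\ell$) accounts for the rank identity $b_{\ell-i}=a_i$.

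The genuine gap is your Step 3, and you flag it yourself, but its size is understated. Two things are actually needed and neither is supplied. First, you must \emph{prove}, not assert, that the backward half of $\TT(M)$ agrees after truncation and shift with $\Hom_{P_X}(\tilde F_\bullet,P_X)$; concretely, that the minimal $P_X$-free resolution of $(\Omega^\ell M)^*$ is the $P_X$-dual of the linear strand of $M$. That identification is the whole content of the ``the quadratic strand is its $P_X$-dual'' clause. Second, the claim that the remaining work ``reduces to a $d^2=0$ identity'' is misleading: $d^2=0$ would only certify that the total complex of the two strands is a complex, whereas the Tate resolution must be \emph{exact}, and exactness of a mapping cone requires a separate argument (for instance that each strand is exact off the overlap and the comparison maps $\phi_i$ induce a quasi-isomorphism onto $\Ext^\ell_{P_X}(M,P_X)$). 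The paper disposes of all of Step 3 by invoking \cite{EisenbudSchreyer}, which establishes the two-strand shape of the Tate resolution for such modules over complete intersections and, in the nested-CI case, even writes down the $\phi_i$ explicitly. You should either cite that result as the paper does, or carry out the construction and exactness proof; as written, the key step is missing rather than merely difficult.
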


\begin{proof}
As in the special case explained in the proof of Proposition~\ref{linear PU},  the Eisenbud--Shamash graded free resolution of $M$ as a $P_X$ module
 \cite[Theorem 7.2]{Eisenbud80} can be constructed from a series of
higher homotopies on a graded $P$-free resolution $F$ of $M$. Because the $q_i$ have degree 2, all the higher homotopies are linear maps, so the construction yields a minimal linear resolution of $M$ whose underlying graded free module is a divided power algebra over $P_X$ on $c$ generators tensored with the underlying module of $F$, and this implies that $\Ext_{P_X}(M,k)$ is a free module over the dual algebra, $k[t_1,\ldots, t_c]$.

If $M$ is Cohen--Macaulay of codimension $\ell$ then the \supth{(\ell+1)} syzygy of $M$ is a maximal Cohen--Macaulay module,
and by \cite{EisenbudSchreyer} the Tate resolution of $M$ has the given form.
\end{proof}

In~\cite{EisenbudSchreyer} there is an explicit description of all maps in the Tate resolution in case of a nested pair of complete intersections such as the following.

\begin{example}\label{g=3 example} Consider the coordinate ring $P_U$ of a $g$-dimensional isotropic subspace $U$ in the complete intersection $X$ of two quadrics as a $P_X$-module. 
 The Tate resolution $\TT(P_U)$  has an overlap of length $\ell=\codim_X \PP(U^*)=2g-1-(g-1)=g$.
 In case $g=3$ it has Betti table
\[
\begin{matrix}
      \cdots&28&20&12&5&1&&&\\
&&&1&5&12&20&28&36&\cdots\\
\end{matrix}
\]
The vertical maps in the display of $\TT(P_U)$ are  northwest diagonal maps in the Betti table, which are represented by matrices of quadratic forms. For example the map $\phi_0$ as in Proposition \ref{Tate of linear CM} is given by a $20\times 1$ matrix of quadrics, represented in the Betti table by the northwest map from the left-most $1$ on the lower to the $20$ in the upper row. 
For arbitrary $g$ we obtain the formulas
\[
a_{2p}= \sum_{i=0}^p (p-i+1)\binom{g+2}{2i}\quad \text{and}\quad a_{2p+1} =  \sum_{i=0}^p (p-i+1)\binom{g+2}{2i+1}
\]
for the ranks $a_i$ in the lower row of the diagram above from the equations~\eqref{ev} and~\eqref{odd} in Section~\ref{pencil section}.
\end{example}

\begin{theorem}\label{construction of TN}
 Let $C=\Cliff(q_1,q_2)$ be the Clifford algebra over $k[s,t]$ of a nonsingular complete intersection of two quadrics in $\PP^{2g+1}$. Let $N$ be a graded $C$-module that is free as a $k[s,t]$-module, and such that the corresponding vector bundles $\sN^{\ev}=\widetilde{N^{\ev}}$ and  $\sN^{\odd}=\widetilde{N^{\odd}(1)}$  defined on the associated hyperelliptic curve $E$ satisfies
\[
 \sN^{\odd} \cong \sN^{\ev} \otimes_{\sC^{\ev}} \sC^{\odd}.
\]
Let $p\in E$ be a ramification point. There is a doubly infinite exact complex
\[ 
\TT(N)\,\colon\ \cdots \rTo F_i \rTo F_{i+1} \rTo \cdots
\]
of free modules $F_i = P_X^{a_i}(i)\oplus P_X^{b_i}(i+1)$
with Betti numbers $a_i = h^1(\sN^{\ev}(ip))$ and $b_i = h^0(\sN^{\ev}((i+1)p))$.
In terms of this decomposition, the complex $\TT(N)$ takes the form 
\[
\begin{matrix}
\to&H^1(\sN^{\ev}) \otimes_k P_X & \to & H^1(\sN^{\ev}(p))\otimes_k P_X(1) &\to& H^1(\sN^{\ev}(2p)) \otimes_k P_X(2)&\to   \cr
\searrow&\oplus  &\searrow& \oplus & \searrow & \oplus  & \searrow \cr
\to&H^0(\sN^{\ev}(p))\otimes_k P_X(p)& \to & H^0(\sN^{\ev}(2p)) \otimes_k P_X(2)  &\to & 
H^0(\sN^{\ev}(3p))\otimes_k P_X(3) &\to .
\end{matrix}
\]
\end{theorem}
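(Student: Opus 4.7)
The plan is to construct $\TT(N)$ by applying the BGG transformation of Section~\ref{BGG} to the $C$-module $N$, then to identify its terms via the Morita equivalence of Section~\ref{pencil section} and to verify exactness using Corollary~\ref{linRes} together with Serre duality on $E$.

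First, for each $j\in\ZZ$ take $F_j = \Hom_k(N_j, P_X)$ as a graded $P_X$-module with the differential $d(\phi)(n) = \sum_i x_i\,\phi(n e_i)$ from Section~\ref{BGG}. The identity $d^2 = 0$ was verified just before Theorem~\ref{linearResolutions}, and uses only the Clifford relations $e_i e_j + e_j e_i = \sum_\ell b_{\ell,i,j} t_\ell$ reproducing the defining relations of $P_X$. Since $N$ is free over $k[s,t]$ and nonzero in all sufficiently large positive and negative degrees, this yields a doubly infinite complex of graded free $P_X$-modules.

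Next, identify the terms. Split $N = N^{ev}\oplus N^{odd}$ by Clifford parity. Under the Morita equivalence (Corollary~\ref{morita}), $N^{ev}$ corresponds to the vector bundle $\sN^{ev}$ on $E$. The hypothesis $\sN^{odd}\cong \sN^{ev}\otimes_{\sC^{ev}}\sC^{odd}$ combined with the identification $\sC^{odd}\cong \sO_E(p)\otimes \sC^{ev}$ from Proposition~\ref{evenOddCliff} gives $\sN^{odd}\cong \sN^{ev}(p)$. Thus each shift of Clifford grading by one in $N$ corresponds geometrically to twisting $\sN^{ev}$ by $\sO_E(p)$, so the graded pieces of $N$ are controlled by sections of $\sN^{ev}(ip)$ for varying $i$. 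Freeness of $N$ over $k[s,t]$ then splits each graded contribution into two summands, one from $H^0(\sN^{ev}(ip))$ and one from $H^1(\sN^{ev}(ip))$; after re-indexing to match the $P_X$-grading convention this yields exactly the predicted decomposition $F_i = P_X^{a_i}(i)\oplus P_X^{b_i}(i+1)$ with $a_i = h^1(\sN^{ev}(ip))$ and $b_i = h^0(\sN^{ev}((i+1)p))$.

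Finally, verify exactness. By Corollary~\ref{linRes} the ``lower strand'' (carrying the $h^0$-coefficients) is the linear resolution of a $P_X$-module $M$ with $\Ext_{P_X}(M,k)\cong N^{ev}$ up to shift; acyclicity holds because $N^{ev}$ arises from a sheaf on $E$ via Morita, producing the required $C$-module structure. The ``upper strand'' (carrying the $h^1$-coefficients) is the $P_X$-dual of the linear resolution of the maximal Cohen-Macaulay module naturally associated with $M$ (as in the Tate-resolution recipe recalled at the start of Section~4), and its exactness corresponds via Serre duality on $E$ to the identification $a_i = h^1(\sN^{ev}(ip))$. Splicing the two strands along their overlap produces the Tate resolution of the predicted form. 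The main obstacle is the compatibility check in the overlap region, where the $\sO_E(p)$-twist picture coming from Morita has to align with the $P_X$-grading coming from BGG; this is exactly what the parity hypothesis $\sN^{odd}\cong \sN^{ev}\otimes_{\sC^{ev}}\sC^{odd}$ is designed to guarantee.
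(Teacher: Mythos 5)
The core idea in your proposal is to take $F_j = \Hom_k(N_j,P_X)$ with the BGG differential and argue about it directly. This is not what the paper does, and unfortunately it cannot work: the terms, the shape, and the exactness argument all go wrong.

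\textbf{The terms have the wrong shape.} Each $\Hom_k(N_j,P_X)$ is a single free $P_X$-module of rank $\dim_k N_j$, while the theorem asserts that $F_i$ is a sum of \emph{two} pieces $P_X^{a_i}(i)\oplus P_X^{b_i}(i+1)$ with $a_i = h^1(\sN^{ev}(ip))$ and $b_i = h^0(\sN^{ev}((i+1)p))$. The graded pieces of $N$, being graded pieces of a module of twisted global sections, only ever see $H^0$; there is no way to read off $H^1$ from $\dim_k N_j$, and your claim that ``freeness of $N$ over $k[s,t]$ then splits each graded contribution into two summands, one from $H^0$ and one from $H^1$'' has no basis. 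The ranks also do not match: for a finitely generated $N$, $\Hom_k(N_j,P_X)$ vanishes for $j$ small and grows for $j$ large, so the complex $\Hom_k(N,P_X)$ is bounded on one side, whereas $\TT(N)$ is genuinely doubly infinite (with $a_i \to\infty$ as $i\to-\infty$ and $b_i\to\infty$ as $i\to+\infty$).

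\textbf{The exactness argument is circular.} You invoke Corollary~\ref{linRes} to assert that the lower strand resolves a module $M$ with $\Ext_{P_X}(M,k)\cong N^{ev}$. But that corollary says $\Hom_k(N,P_X)$ is acyclic \emph{if and only if} $N$ is of the form $\Ext_{P_X}(M,k)$; it does not tell you this holds for an arbitrary $C$-module free over $k[s,t]$. The point of Theorem~\ref{construction of TN} is precisely to construct $\TT(N)$ and prove its exactness \emph{without} assuming $N$ comes from some $M$ — that is why it applies in Theorem~\ref{UlrichCondition} to an $N$ whose cohomology conditions one is still in the process of determining.

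\textbf{Where the $H^1$'s actually come from.} The paper's proof starts not from $\Hom_k(N,P_X)$ but from $N\otimes_k P$ with $d(n\otimes r) = \sum n e_i \otimes x_i r$; this does \emph{not} square to zero over the polynomial ring $P$ but produces a matrix factorization of $sq_1+tq_2$ over $k[s,t]\otimes P$. After sheafifying along $\PP^1$ and passing to $E$, one obtains a complex of box products $\sA_E(i)\boxtimes P_X(2i)$, $\sB_E(i)\boxtimes P_X(2i+1)$ on $E\times \Spec P_X$. The desired $\TT(N)$ is (essentially) the derived pushforward $R\rho_*$ of this complex to $\Spec P_X$, computed via a \v Cech double complex and a spectral sequence. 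It is $R^1\rho_*$ that contributes the $H^1(\sN^{ev}(ip))$ terms, and the nontrivial content of the proof is constructing the $E_1$-differentials as honest lifted maps (via chosen splittings $h$) and checking that they assemble into a double complex whose total complex is exact. None of this is accessible if one stays with $\Hom_k(N_j,P_X)$ over $P_X$ alone.
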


\begin{proof} We will use the notations $x_i, e_i$ as defined in Section~\ref{BGG}. Consider the  sequence of maps
\[ 
\cdots \rTo^d N_{i-1} \otimes_k P   \rTo^d N_{i} \otimes_k P  \rTo^d N_{i+1} \otimes_k P \rTo^d \cdots
\]
defined by  $d(n\otimes_k r) = \sum_{i=1}^{2g+2} ne_i  \otimes_k x_ir $.

Computations similar to that at the beginning of Section~\ref{BGG} show that
\[
 d^{2}(n\otimes_k r) = \sum_{i,j} (n e_{i}e_{j}) \otimes_{k} (x_{i}x_{j} r ) 
= ns \otimes_k q_1(x)r + nt \otimes_k q_2(x)r
= n \otimes_{k[s,t]}  (sq_1(x)+tq_2(x))r,
\]
where the last step uses the identification $N \otimes_k P = N \otimes_{k[s,t]} P[s,t]$.

Set $A:=N^{\ev} \otimes_k P$ and $B:=N^{\odd} \otimes _k P$.
The map $d$ induces a matrix factorization 
\[ (A \lra B(0,1),\ B(0,1)\lra A(1,2))\]
of $sq_1+tq_2$ over the bi-graded polynomial ring  $k[s,t,x_1,\ldots,x_{2g+2}]$.
As in Example~\ref{matrix factorization example}, this matrix factorization induces a  $2$-periodic resolution
\[
 \cdots \rTo \overline B(-1,-1) \rTo \overline A  \rTo \overline B(0,1) \rTo \overline A(1,2) \rTo \cdots
\]
where $\overline A$ and $\overline B$ are restrictions of $A$ and $B$ to $k[s,t,x_1,\ldots,x_{2g+2}]/(sq_1+tq_2)$.

Sheafifying with respect to the variables $(s,t)$ we get a doubly infinite exact complex
\[
 \cdots \rTo \widetilde B(-1,-1) \rTo \widetilde A  \rTo \widetilde B(0,1) \rTo \widetilde A(1,2) \rTo \cdots
\]
of direct sums of line bundles on 
the hypersurface  $V(sq_1+tq_2) \subset \PP^1 \times \AA^{2g+2}$. 

We define an exact complex of $\sO_{\PP^1}\otimes P_X$-modules
by factoring out $q_1$ on the set $t\neq 0$ and $q_2$ on the set $s\neq 0$, identified on the set where neither $s$ nor $t$ is zero with
$k[s/t, t/s]\otimes P/(q_1, q_2)$.

 Since the central element $y$ of the even Clifford algebra anti-commutes with the action of the $e_i$ on $N$ by Remark~\ref{anticommuting} we may regard this also as a complex of $\sO_E \otimes P_X$-modules that are box products of locally free $\sO_E$-modules with graded free $P_X$-modules,
\[
\TT\,\colon\ \cdots \rTo \sA_E \boxtimes P_X \rTo  \sB_E \boxtimes P_X(1) \rTo \sA_E(1) \boxtimes P_X(2) \rTo \cdots,
\]
where use the fact that $\sO_{E}(1) \cong \sO_{E}(2p)$. Here $\sA_E = \sN^{\ev}$ and 
$
\sB_E 
$ 
is isomorphic to
\[
\sN^{\odd} = \sN^{\ev}\otimes_{\sC^{\ev}} \sC^{\odd} =\sN^{\ev}(p)
\]
by Proposition~\ref{evenOddCliff},
where the action of $y$ on $\sB_{E}$ is induced by the action of $-y$ on $N^{\odd}$. 
Thus these are the vector bundles on $E$ defined by the action  of $y$ or $-y$ on the even and odd part of $N$ respectively. 
In other words, $\sB_{E} \cong \iota^{*}\sN^{\odd}$, where $\iota\colon E\to E$  denotes the covering involution of $E \to \PP^{1}$.

\medskip
Let $\rho\colon E \times \Spec P_X \to \Spec P_X $ denote the second projection. The desired Tate resolution $\TT(N)$ associated to the Clifford module $N$ is essentially $R\rho_* \TT$. Since $\TT$ is a complex, we get a spectral sequence, which we analyze as follows:
truncate $\TT$ on the left to obtain a left bounded complex
\[
L_i \rTo \sA_E(i) \boxtimes P_X(2i) \rTo  \sB_E(i) \boxtimes P_X(2i+1) \rTo \sA_E(i+1) \boxtimes P_X(2i+2) \rTo \cdots,
\]
and take a \v Cech resolution on $E$ coming from a covering with two affine open subsets. We obtain a  double complex:
\begin{equation} \tag{$*_i$} \label{ref*}
\begin{gathered}
\xymatrix{
 0  &0 & 0  &  \cr
 C^1(L_{i}) \ar[u] & C^1(\sA_E(i)) \boxtimes P_X(2i) \ar[r] \ar[u] & C^1(\sB_E(i)) \boxtimes P_X(2i+1) \ar[r] \ar[u]& 
 \cdots \cr
 C^0(L_{i}) \ar[u] & C^0(\sA_E(i)) \boxtimes P_X(2i) \ar[r] \ar[u] & C^0(\sB_E(i)) \boxtimes P_X(2i+1) \ar[r] \ar[u]&
 \cdots \cr
  0 \ar[u] & 0 \ar[u]  & 0 \ar[u]&  \cr 
}
\end{gathered}
\end{equation}
The vertical homology of this double complex is a box product with the cohomology of $\sA_E$ and $\sB_E$ and  their twists.
The $E_2$-differentials of the  spectral sequence of the double complex can be lifted to maps of the form
$H^1(\sA_E) \otimes P_X \to H^0(\sA_E(1)) \otimes P_X(2)$ 
on the $E_1$-page of the sequence.
To do this, we choose $k$-vector space splittings $h$ of the  \v Cech sequence
\begin{equation*}\label{cech} \tag{$\alpha$}
0 \rTo H^0(\sA_E) \rTo C^0(\sA_E) \rTo C^1(\sA_E) \rTo H^1(\sA_E) \to 0
\end{equation*}
and the corresponding sequences $(\alpha_i)$ and $(\beta_i)$ for the sheaves $\sA_E(i)$'s and  $\sB_E(i)$'s respectively. We define the map 
\[
H^1(\sA_E) \otimes P_X \rTo H^0(\sA_E(1)) \otimes P_X(2)
\]
as the composition
\[
\xymatrix{ 
H^1(\sA_E) \otimes  P_X \ar[d]^{h \otimes \id} && \cr
C^1(\sA_E) \boxtimes P_X  \ar[r]  & C^1(\sB_E) \boxtimes P_X(1) \ar[d]^{h \boxtimes \id}& \cr
 &C^0(\sB_E) \boxtimes P_X(1) \ar[r]&C^0(\sA_E(1))\boxtimes P_X(2)\ar[d]^{h \otimes \id} \cr
 && H^0(\sA_E(1)) \otimes P_X(2) .
}
\]
Abusing notation we write $\tilde h$ for all south arrows,  $\tilde \partial$ for all north arrows, and $\varphi$ for all east arrows
in the corresponding diagram 
\begin{equation} \tag{{4}} \label{diag4}
\xymatrix{ 
H^1(\sA_E(i)) \otimes  P_X(2i) \ar[d]^{\tilde h} \ar[r]^\varphi &H^1(\sB_E(i)) \otimes  P_X(2i+1)\ar[d]^{\tilde h} \ar[r]^\varphi& H^1(\sA_E(i+1)) \otimes  P_X(2i+2) \ar[d]^{\tilde h}\cr
C^1(\sA_E(i)) \otimes  P_X(2i) \ar[d]^{\tilde h} \ar[u]^{\tilde \partial} \ar[r]^\varphi &C^1(\sB_E(i)) \otimes  P_X(2i+1) \ar[d]^{\tilde h} \ar[u]^{\tilde \partial} \ar[r]^\varphi \ar[r]& C^1(\sA_E(i+1)) \otimes  P_X(2i+2) \ar[d]^{\tilde h} \ar[u]^{\tilde \partial} \cr
C^0(\sA_E(i)) \otimes  P_X(2i) \ar[d]^{\tilde h} \ar[u]^{\tilde \partial} \ar[r]^\varphi &C^0(\sB_E(i)) \otimes  P_X(2i+1) \ar[d]^{\tilde h} \ar[u]^{\tilde \partial} \ar[r]^\varphi \ar[r]& C^0(\sA_E(i+1)) \otimes  P_X(2i+2) \ar[d]^{\tilde h} \ar[u]^{\tilde \partial} \cr
H^0(\sA_E(i)) \otimes  P_X(2i) \ar[u]^{\tilde \partial} \ar[r]^\varphi &H^1(\sB_E(i)) \otimes  P_X(2i+1)\ar[u]^{\tilde \partial}\ar[r]^\varphi& H^0(\sA_E(i+1)) \otimes  P_X(2i+2) \ar[u]^{\tilde \partial}\cr
}
\end{equation}
with four rows. 

For  $\alpha \in H^1(\sA_E) \boxtimes  P_X$ we have
\begin{align*}
&\alpha = \tilde \partial \tilde h \alpha & & \hbox{since } \partial h = \id_{H^1} \\
\Rightarrow \quad & \varphi \alpha= \tilde \partial \varphi \tilde h \alpha& &\hbox{since }[\varphi,\tilde \partial]=0 \\
\Rightarrow \quad & \tilde h\varphi \alpha= -\tilde \partial \tilde h \varphi \tilde h \alpha + \varphi \tilde h \alpha && \hbox{since } \partial h +h \partial =\id_{C^1}\\
\Rightarrow \quad &\varphi \tilde h\varphi \alpha= -\varphi \tilde \partial \tilde h \varphi \tilde h \alpha && \hbox{since }\varphi^2=0 \\
\Rightarrow \quad &\varphi \tilde h \varphi \alpha= - \tilde \partial \varphi \tilde h \varphi \tilde h \alpha && \hbox{since }[\varphi,\tilde \partial]=0\\
\Rightarrow \quad &\tilde h \varphi \tilde h \varphi \alpha=  \tilde \partial \tilde h \varphi \tilde h \varphi \tilde h \alpha - \varphi \tilde h \varphi \tilde h \alpha && \hbox{since }\partial h +h \partial =\id_{C^0}\\ 
\Rightarrow \quad &\varphi \tilde h \varphi \tilde h \varphi \alpha=  \tilde \partial \varphi \tilde h \varphi \tilde h \varphi \tilde h \alpha && \hbox{since }\varphi^2=0 \hbox{ and } [\varphi,\tilde \partial]=0\\
\Rightarrow \quad &\tilde h \varphi \tilde h \varphi \tilde h \varphi \alpha=   \varphi \tilde h \varphi \tilde h \varphi \tilde h \alpha && \hbox{since }h\partial=\id_{H^0}\\
\Rightarrow \quad &(\tilde h \varphi \tilde h \varphi \tilde h) \varphi =   \varphi (\tilde h \varphi \tilde h \varphi \tilde h).&&
\end{align*}
Thus with the lifted maps we obtain a double complex, whose total complex is our desired complex $\TT(N)$:
\[
\begin{matrix}
\to&H^1(\sA_E) \otimes P_X & \to & H^1(\sB_E)\otimes P_X(1) &\to& H^1(\sA_E(1)) \otimes P_X(2)   \cr
\searrow&\oplus  &\searrow& \oplus & \searrow & \oplus  & \cr
\to &H^0(\sB_E)\otimes P_X(1)& \to & H^0(\sA_E(1)) \otimes P_X(2)  &\to & 
H^0(\sB_E(1))\otimes P_X(3) .
\end{matrix}
\]
The right truncated complexes are exact except at the first two position since the spectral sequence of \eqref{ref*}
 converges to the cohomology of $L_i$. Since we can take $i$ arbitrarily large negative, the complex $\TT(N)$ is~exact.
\end{proof}

\begin{proposition}\label{bundle from ulrich}
Let $M$  be a $P_X$-module with a linear resolution as an $P$-module. Then 
\begin{enumerate}
  \item\label{item1} $N=\Ext_{P_X}(M,k)$ is a $C=\Ext_{P_X}(k,k)$-module which is free as an $k[s,t]$-module. 
 \item\label{item2} The sheafifications  $\sN^{\ev}$ and $\sN^{\odd}=\widetilde{N^{\odd}(1)}$ satisfies 
\[
 \sN^{\odd} \cong \sN^{\ev} \otimes_{\sC^{\ev}} \sC^{\odd}.
\]
\item\label{item3} $N = H^0_*(\sN^{\ev})\oplus H^0_*(\sN^{\odd})(-1)$ and the $C$-module $N$ is determined by the $\sC^{\ev}$-module $\sN^{\ev}$.
\item\label{item4} The $P_X$-dual complex
$\TT(N)^*$ is the Tate resolution $\TT(M)$ of $M$. 
\end{enumerate}
\end{proposition}

\begin{proof} \eqref{item1} Let 
$
0 \to F_{c} \to \cdots \to F_{1} \to F_{0} \to M \to 0
$
be the linear $P$-resolution of $M$. Then by the Eisenbud--Shamash construction \cite[Theorem 7.2]{Eisenbud80},
$\Ext_{P_{X}}(M,k)=N=N^{\ev}\oplus N^{\odd}$ is a free $k[s,t]$-module.

\eqref{item2} We have
\[ 
\rank_{k[s,t]} N^{\ev}= \sum_{i\ge 0}  \rank_{P} F_{2i}\quad \text{and}\quad \rank_{k[s,t]} N^{\odd} = \sum_{i \ge 1}  \rank_{P} F_{2i+1}.
\]
Since $\sum_{i=0}^{c} (-1)^{i} \rank_P F_{i}=0$ the $k[s,t]$-modules $N^{\ev}$ and $N^{\odd}$ have equal rank.
Theorem~\ref{linearResolutions}  shows that the minimal free $P_{X}$-resolution of $M$ is isomorphic to
 $\Hom_{k}(\Ext_{P_{X}}(M,k),P_{X})$. From this construction we see that
if one of the maps
\[
\Ext_{P_{X}}^{i}(M,k) \times \Ext^{1}_{P_{X}}(k,k) \rTo \Ext^{i+1}_{P_{X}}(M,k)
\]
were not surjective, then there would be a generator of the module
$\Hom_{k}(\Ext^{i+1}_{P_{X}}(M,k),k)$ which maps to zero in the complex. This is not possible because the complex is minimal.
We conclude that the map
\[
\cN^{\ev}\otimes_{\sC^{\ev}} \sC^{\odd} \rTo \sN^{\odd}
\]
is a surjective morphism of $\sO_{E}$-vector bundles of the same rank and  hence an isomorphism
of $\sC^{\ev}$ modules. 

\eqref{item3} It follows that
\[
\cN^{\odd}\otimes_{\sC^{\ev}} \sC^{\odd} \cong \cN^{\ev}\otimes_{\sC^{\ev}} \sC^{\odd}\otimes_{\sC^{\ev}} \sC^{\odd} \rTo \sN^{\ev}
\otimes \sH
\]
is also an isomorphism. 

The formula for $N$ follows because $N$ is a free $k[s,t]$-module. Since $C^{\ev} = H^0_*(\sC^{\ev})$
and  \mbox{$C^{\odd} = H^0_*(\sC^{\odd})(-1)$} the maps above determine the maps 
$N^{\ev}\otimes_kC^{\odd} \to N^{\odd}$
and
$N^{\odd}\otimes_kC^{\odd} \to N^{\ev}$,
and thus the $C$-module structure on $N$.

\eqref{item4} By parts~\eqref{item1} and~\eqref{item2} we can apply Theorem~\ref{construction of TN}. The dual of the $H^{0}$-strand of $\TT(N)$ coincides with $\Hom_{k}(\Ext_{P_{X}}(M,k),P_{X})$ by construction.
Since $\TT(N)^{*}$ and $\TT(M)$ are exact minimal complexes which coincide for large homological degree, they are isomorphic.
\end{proof}

\begin{example}\label{g=3example bis} 
 Thus in case $g=3$ the Betti table 
\[
\begin{matrix}
      \cdots&28&20&12&5&1&&&\\
&&&1&5&12&20&28&36&\cdots\\
\end{matrix}
\]
of the Tate resolution of $M=\TT(H^0_*(\sF_U\otimes_{\sC^{\ev}}\sC))$  
has a second interpretation.
It is also the cohomology table
\[
\left(h^i(\sF_U((j+1-i)p)\right)_{\substack{i=0,1\\ j\in \ZZ}}
\]
of $\sF_U$ as a vector bundle on the hyperelliptic curve $E$. 
\end{example}

\begin{theorem}\label{UlrichCondition}  Let $N$ be a $C$-module which is free over $k[s,t]$ satisfying 
$\sN^{\odd}\cong \sN^{\ev}\otimes_{\sC^{\ev}}\sC^{\odd}$. Let $\TT(N)$ be the complex constructed in Theorem \ref{construction of TN}
whose terms are described by cohomology groups of $\sA_{E}=\sN^{\ev}$ and $\sB=\sN^{\odd}$ and their twists. The cokernel $G_{X}$ of the map
\[
H^1(\sB_E(-1)) \otimes P_X(-1) \rTo H^1(\sA_E) \otimes P_X,
\]
which is a component of   the differential $F_{-1}\to F_{0}$ of $\TT(N)$, is an Ulrich module if and only if $H^1(\sB_E)$ and  $H^0(\sB_E)$ vanish.
\end{theorem}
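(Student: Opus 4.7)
The plan is to interpret the Ulrich condition on $G_X$ as a no-overlap condition between the two strands of the Tate complex $\TT(N)$ constructed in Theorem \ref{construction of TN}. Each term decomposes as
$$
F_i = F_i^{top} \oplus F_i^{bot}, \quad F_i^{top} = H^1(\sA_E(ip)) \otimes_k P_X(i), \quad F_i^{bot} = H^0(\sB_E(ip)) \otimes_k P_X(i+1),
$$
with generators of $F_i^{top}$ in degree $-i$ and of $F_i^{bot}$ in degree $-i-1$. These two strands play the roles of the linear and quadratic strands in the Tate resolution of a $P_X$-module with linear $P$-resolution, as in Proposition \ref{Tate of linear CM}; the module $G_X$ is Ulrich precisely when the overlap $\ell = \codim_{P_X} G_X$ equals $0$, i.e.\ the strands are completely separated.

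For the direction $(\Leftarrow)$, assume $H^0(\sB_E) = H^1(\sB_E) = 0$. The inclusion $\sB_E(-kp) \hookrightarrow \sB_E$ (for $k \geq 0$) shows $H^0(\sB_E(ip)) \subseteq H^0(\sB_E) = 0$ for $i \leq 0$, so $F_i^{bot} = 0$ for $i \leq 0$. The long exact sequence associated to $0 \to \sB_E((k-1)p) \to \sB_E(kp) \to \sB_E(kp)|_p \to 0$ expresses $H^1(\sB_E(kp))$ as a quotient of $H^1(\sB_E((k-1)p))$; iteration from $H^1(\sB_E) = 0$ gives $H^1(\sB_E(kp)) = 0$ for all $k \geq 0$, and using $\sA_E(ip) = \sB_E((i-1)p)$ this implies $F_i^{top} = 0$ for $i \geq 1$. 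Consequently the full differential $F_{-1} \to F_0$ coincides with its top-to-top component, and the left part of $\TT(N)$ reads
$$
\cdots \to F_{-2}^{top} \to F_{-1}^{top} \to F_0^{top} \to G_X \to 0,
$$
a pure linear minimal $P_X$-free resolution of $G_X$. By Proposition \ref{bundle from ulrich} this corresponds to a linear $P$-resolution of length $2 = \codim_P P_X$, so $G_X$ is Ulrich.

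For $(\Rightarrow)$, suppose $G_X$ is Ulrich. Since $G_X$ is generated in degree $0$, its minimal free $P_X$-resolution cannot have degree $-1$ generators at position $0$; hence $F_0^{bot} = H^0(\sB_E) \otimes P_X(1)$ must vanish, forcing $H^0(\sB_E) = 0$. Because $P_X$ is Gorenstein, the $P_X$-dual $G_X^{\ast} = \Hom_{P_X}(G_X, P_X)$ is again Ulrich, and its minimal resolution is read off from the right-hand side of $\TT(N)^{\ast}$. The same minimality argument applied on that side forces $F_1^{top} = H^1(\sB_E) \otimes P_X(1)$ to vanish, so $H^1(\sB_E) = 0$.

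The principal subtle point is confirming that the pure-top-strand subcomplex is genuinely the minimal $P_X$-free resolution of $G_X$, and that the resulting linear $P_X$-resolution lifts to a linear $P$-resolution via the BGG/Shamash correspondence of Proposition \ref{bundle from ulrich}; the overlap analysis of Proposition \ref{Tate of linear CM} is the key tool making this translation precise.
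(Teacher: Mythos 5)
Your $(\Rightarrow)$ direction is in the same spirit as the paper's (Ulrich $\Rightarrow$ the Tate resolution has non-overlapping strands), phrased via generation degree for one vanishing and via Gorenstein duality for the other; both are acceptable shorthands for the same structural fact, though strictly speaking both you and the paper are tacitly identifying $\TT(N)$ with the Tate resolution of $G_X$.

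The $(\Leftarrow)$ direction, however, has a genuine gap at the last step. You correctly deduce from $H^0(\sB_E) = H^1(\sB_E) = 0$ that the left half of $\TT(N)$ collapses to the top strand, giving a minimal \emph{linear $P_X$-free resolution} of $G_X$; but you then assert that ``by Proposition \ref{bundle from ulrich} this corresponds to a linear $P$-resolution,'' which is not what that proposition says. Proposition \ref{bundle from ulrich} takes as hypothesis a module with linear $P$-resolution and produces the Tate resolution $\TT(N)$; it does \emph{not} assert the converse implication from linear $P_X$-resolution to linear $P$-resolution. And that converse is false in general: $P_X$ itself is MCM over $P_X$, generated in degree $0$, with (trivially) linear $P_X$-resolution, yet it is not Ulrich --- its $P$-resolution is the quadratic Koszul complex on $q_1,q_2$. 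So a linear $P_X$-resolution alone does not give the Ulrich property, and one must actually establish the right numerics. The paper's proof supplies exactly this: from the vanishing it computes $\pi_*\sB_E \cong \sO_{\PP^1}(-1)^{2r}$ and deduces $\pi_*\sA_E \cong \sO_{\PP^1}(-1)^r \oplus \sO_{\PP^1}(-2)^r$, so $G_X$ is presented by an $r\times 2r$ matrix of linear forms; factoring out a maximal $P_X$-regular sequence $z$ of linear forms (so that $P_X/zP_X$ has Hilbert function $1,2,1$) forces $G_X/zG_X \cong k^r$, and only then does the linear $P$-resolution, hence the Ulrich property, follow. Your proof skips this Riemann--Roch/pushforward computation and the Artinian reduction, which is the heart of the direction.
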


\begin{proof} If $G_X$ is an Ulrich $P_X$-module, then it is its own 
MCM approximation. Hence the Tate resolution of $G_X$ has non-overlapping strands so $H^1(\sB_E)$ and  $H^0(\sB_E)$ vanish.  

Conversely, if these groups vanish then $G_X$ is a MCM module over $P_X$ with a linear $P_X$-resolution, and from the form of the complex $\TT(N)$ we see that
$H^0(\sA_E)$ and all terms to the left of it in the lower row
must also vanish.
To show that  $G_X$ is an Ulrich module
we must prove that $G_X$ has a linear resolution as a~$P$-module.

We first make the form of the $P_X$-resolution more explicit. The cohomological vanishing \mbox{$h^0(\sB_E)=h^1(\sB_E)=0$} implies that $\pi_* \sB_E = \sO_{\PP^1}(-1)^{2 r}$,
where $r= \rank \sB_E= \rank \sA_E$. Since $\sB(-p) \cong \sA$ we have
$\deg \sA_E= \deg \sB_E - r$.  Thus $H^0(\sA_E)=0$ and, by the Riemann-Roch formula, $h^1(\sA_E) =r$. 
The form of the Tate resolution implies that the bundle $\pi_* \sA_E$ splits into a direct sum of  copies of $\sO_{\PP^1}(-1)$ and 
$\sO_{\PP^1}(-2)$. Indeed, there cannot be any summands of the form $\sO_{\PP^1}(-d)$ with $d\le -3$
because there are no nonzero maps to this sheaf from $\pi_* \sB_E(-1)=\sO_{\PP^1}(-2)^{2 r}$.
Hence
\[
\pi_* \sA_E = \sO_{\PP^1}(-1)^{r}\oplus \sO_{\PP^1}(-2)^{r}.
\]
Since $\pi_* \sB_E(-1) = \sO_{\PP^1}(-2)^{2 \rank \sB_E}$ we see that $G_X$ is defined by an $r \times 2r$ matrix of linear forms and  the $P_X$-free resolution of $G_X$ has the form
\[
\cdots \rTo P_X^{(i+1)r}(-i) \rTo \cdots \rTo P_X^{2r}(-1) \rTo^{\phi_{1}} P_X^r \rTo G_X \rTo 0.
\]

We can now show that $G_X$ has linear resolution as a $P$-module.
 Since $G_X$ is maximal Cohen--Macaulay module over $P_X$, this statement
 can be checked after factoring out a maximal $P_X$-regular sequence $z$ of linear forms in $P$.
Note that $P_X/zP_X$ has Hilbert function $1,2,1$. The sequence $z$ is also a regular sequence on $G_X$ because
$G_X$ is a maximal Cohen--Macaulay module.
 From the resolution of $G_X$ over $P_X$ we see
  that the values of the Hilbert function of $G_X/zG_X$ are $r, 0,0,\ldots$; that is, 
 $G_X/zG_X \cong k^{r}$. As a module over $P/zP$ this has a linear resolution, and thus $G_X$ has
 a linear resolution as a $P$-module. 
 Thus $G_X$ is an Ulrich $P_X$-module. 
 \end{proof}

\begin{remark}
 The proof shows in particular that, the matrix 
\[
P^{2r}(-1) \rTo^{\phi_{1}} P^r
\]
obtained by regarding the linear $P_X$-presentation of $G_X$ as a matrix over $P$
is a presentation matrix of $G_X$ as a $P$-module.
\end{remark}
Using the Morita equivalence between the hyperelliptic curve $E$ and the Clifford algebra $C$ we can make this more precise. Recall that a bundle $\sB$ on $E$ has the Raynaud property if $H^0(C, \sB) = H^1(C,\sB)=0$. We are now ready
to prove  parts of Theorem ~\ref{main theorem} from the introduction, which we repeat for the reader's convenience.

\begin{theorem}\label{main1}
There is a $1-1$ correspondence between Ulrich bundles on the smooth complete intersection of two quadrics $X\subset \PP^{2g+1}$ and  bundles with the Raynaud property on the corresponding hyperelliptic curve $E$ of the form
$\sG \otimes \sF_{U}$.
The Ulrich bundle corresponding to a rank $r$ vector bundle $\sG$ has rank $r2^{g-2}$. 

If $\sL$ is a line bundle on $E$ then $\sL \otimes \sF_{U}$ does not have the Raynaud property, so the minimal possible rank of an Ulrich sheaf on $X$ is $2^{g-1}$, and Ulrich bundles of  rank $2^{g-1}$ exist.
\end{theorem}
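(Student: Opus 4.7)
The plan is to assemble the correspondence from the machinery of Sections~3 and~4, then handle the rank formula and the non-existence for line bundles as computations. Starting from an Ulrich bundle $\sE_X$ on $X$, put $M=H^{0}_{*}(\sE_X)$. By Proposition~\ref{bundle from ulrich}, $N := \Ext_{P_X}(M,k)$ is a $\ZZ$-graded $C$-module that is free as a $k[s,t]$-module, and whose sheafification $\sN=\sN^{ev}\oplus\sN^{odd}$ on $E$ satisfies $\sN^{odd}\cong\sN^{ev}\otimes_{\sC^{ev}}\sC^{odd}$. Viewed as a right $\sC^{ev}$-module, $\sN^{odd}$ takes the form $\sG\otimes_{\sO_E}\sF_{U}$ for a unique vector bundle $\sG$ on $E$ via Morita (Corollary~\ref{morita}). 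Theorem~\ref{UlrichCondition} then states that $\sE_X$ is Ulrich if and only if $H^{0}(\sN^{odd})=H^{1}(\sN^{odd})=0$, which is precisely the Raynaud property of $\sG\otimes\sF_U$. In the reverse direction, given $\sG$ with $\sG\otimes\sF_U$ of Raynaud type, set $\sN^{odd}:=\sG\otimes\sF_U$ and $\sN^{ev}:=\sN^{odd}(-p)$; by Proposition~\ref{evenOddCliff} this data defines a $C$-module $N$ satisfying the hypothesis of Theorem~\ref{construction of TN}, and Theorem~\ref{UlrichCondition} identifies the desired Ulrich module as the cokernel of the distinguished differential in $\TT(N)$.

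The rank formula is a bookkeeping consequence. The proof of Theorem~\ref{UlrichCondition} exhibits a presentation $P_X^{2r}(-1)\to P_X^{r}$ of the Ulrich module with $r=\rank_{\sO_E}\sN^{odd}=(\rank\sG)\cdot 2^{g}$ by Proposition~\ref{degreeOfFu}. Since $\deg X=4$, an Ulrich module of rank $s$ has $h^{0}=s\cdot\deg X=4s$, and matching $h^{0}(G_X)=r$ gives $s=(\rank\sG)\cdot 2^{g-2}$.

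For the failure of the Raynaud property on $\sL\otimes\sF_U$ for a line bundle $\sL$, the key input is the elementary calculation $h^{0}(\sF_U)=1$, immediate from the explicit form $F_U=\bigoplus_i(\Lambda^{2i}U^{\perp})^{*}\otimes T(-i)$ of Section~3: only the $i=0$ summand contributes in degree zero, yielding $(\Lambda^{0}U^{\perp})^{*}\otimes T_{0}=k$. A Riemann--Roch computation gives
$$
\chi(\sL\otimes\sF_U)\;=\;2^{g-1}\bigl(2\deg\sL+2-g\bigr),
$$
which is nonzero for every integer $\deg\sL$ when $g$ is odd, so $H^{0}$ or $H^{1}$ is nonzero and Raynaud fails. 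For $g$ even the only candidate is $\deg\sL=(g-2)/2\ge 0$, so write $\sL=\sL_{0}\otimes\sO_{E}(D)$ with $\sL_{0}\in\Pic^{0}(E)$ and $D$ an effective divisor of degree $(g-2)/2$. By Reid's theorem (Corollary~\ref{ReidsTheorem}) there is a maximal isotropic $U'$ with $\sL_{0}\otimes\sF_{U}\cong\sF_{U'}$, so $\sL\otimes\sF_{U}\cong\sF_{U'}(D)$; the inclusion $\sF_{U'}\hookrightarrow\sF_{U'}(D)$ induces an injection on global sections, forcing $h^{0}(\sL\otimes\sF_{U})\ge h^{0}(\sF_{U'})=1>0$, again violating Raynaud. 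Hence the minimal rank is $2^{g-1}$, corresponding to $\rank\sG=2$; the existence of such $\sG$ is the content of Section~5, via an explicit matrix-factorization construction, and will not be revisited here.

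The main obstacle is careful bookkeeping of the $\sO_{E}(p)$-twist implicit in $\sC^{odd}\cong\sO_{E}(p)\otimes_{\sO_E}\sC^{ev}$: identifying $\sN^{odd}$ (rather than $\sN^{ev}$) with $\sG\otimes\sF_U$ under Morita is precisely what aligns the Raynaud property in the theorem statement with the vanishing condition of Theorem~\ref{UlrichCondition}, and keeping this normalization straight is the single pivotal step; once it is in place, the remainder of the argument is a compilation of results from Sections~3 and~4.
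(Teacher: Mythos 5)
Your proposal is correct, and for the bijection and the rank formula it follows the same path as the paper: $\sE_X\leadsto M=H^0_*(\sE_X)\leadsto N=\Ext_{P_X}(M,k)$, Morita to extract $\sG$ from $\sN^{odd}$, Theorem~\ref{UlrichCondition} to match the Ulrich condition with the Raynaud vanishing, and the rank count via the number of generators $\rank(\sG\otimes\sF_U)=r2^g$ against $\deg X=4$. Where you genuinely diverge is in showing that no line bundle $\sL$ makes $\sL\otimes\sF_U$ Raynaud. The paper invokes Reid's theorem to write $\sL\otimes\sF_U\cong\sF_{U'}(mp)$ and then points out that $\TT(N)^*$ would have to be (a shift of) the Tate resolution of $P_{U'}$, which manifestly has overlapping strands of length $g$ (Example~\ref{g=3 example}); you instead first kill all $g$ odd at once with the Riemann--Roch computation $\chi(\sL\otimes\sF_U)=2^{g-1}(2\deg\sL+2-g)$, and for $g$ even you use the same Reid identification $\sL_0\otimes\sF_U\cong\sF_{U'}$ together with the elementary observation $h^0(\sF_{U'})=1$ (read off from the degree-zero summand $(\Lambda^0 U'^\perp)^*\otimes T_0$ of the explicit $T$-free decomposition of $F_{U'}$, which coincides with $H^0_*(\sF_{U'})$ since $F_{U'}$ is $k[s,t]$-free, hence MCM over $R_E$). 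Both arguments route through Corollary~\ref{ReidsTheorem}; the paper's is more uniform and exploits the known shape of $\TT(P_{U'})$, while yours is more self-contained and makes the obstruction visible as a Riemann--Roch parity constraint plus a concrete nonvanishing global section. One bookkeeping note: you use $r$ both for $\rank\sG$ and for $\rank\sN^{odd}=2^g\rank\sG$ in the rank paragraph; the arithmetic is right, but keep the two separated to avoid confusion.
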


\begin{proof}
Let $p \in E$ be a ramification point. Consider  $\sB=\sG\tensor \sF_U$, $\sA=\sG(-p) \tensor \sF_U$ and the Clifford module
$N=\oplus_{i} H^0(\sA(ip))$. By Theorem \ref{UlrichCondition} $\TT(N)$ is the Tate resolution of the Ulrich module 
$G_X =\coker\left( H^1(\sB_E(-1)) \otimes P_X(-1) \to H^1(\sA_E) \otimes P_X\right)$
if and only if $H^0(\sB)=H^1(\sB)=0$. 
 If 
$r= \rank \sG$
and the condition is satisfied then the corresponding Ulrich module $G_X$ on $X$ has rank $G_X = r 2^{g-2}$
since the number of generators of $G_X$ is $\rank (\sG \otimes \sF_U)=r 2^g$.

Conversely, suppose that $M$ is an Ulrich module on $P_X$, and let $N= \Ext_{P_X}(M,k)$. This is a $C$-module,
and thus an $R_E$-module which is a free $k[s,t]$-module
by the Eisenbud--Shamash construction \cite[Theorem~7.2]{Eisenbud80}. The odd part of its sheafification is thus of the form
 $\sN^{\odd} = \sG \otimes_{\sO_E} \sF_U$ for some a vector bundle $\sG$ by Corollary~\ref{morita}, the Morita theorem. By 
 Theorem \ref{UlrichCondition} $\sG \otimes_{\sO_E} \sF_U$ has the Raynaud property.

An Ulrich module of rank $2^{g-2}$ would correspond to a line bundle $\sL$ on $E$
such that $\sL\otimes \sF_U$ has vanishing cohomology.  By Corollary \ref{ReidsTheorem}, $\sL \otimes \sF_U = \sF_{U'}(mp)$ for some
maximal isotropic plane $U'$ and some integer $m$. Thus $\TT(N)^*$ would be the Tate resolution of $P_{U'}$
up to shift. But 
$\TT(P_{U'})$ has overlapping strands (in fact $P_{U'}$ is not a MCM $P_X$-module).

The existence of Ulrich bundles of rank $2^{g-1}$ is proven in Section~\ref{sec:Ulrich}.
\end{proof}

\begin{proposition}\label{congruenceCondition} Ulrich bundles of rank $r2^{g-2}$ on a smooth complete intersections of two quadrics in $\PP^{2g+1}$ do not exist
if $r\cdot g \equiv 1 \mod 2$.
\end{proposition}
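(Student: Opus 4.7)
The proof is essentially an Euler-characteristic obstruction via Riemann--Roch on the hyperelliptic curve $E$.

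First I would appeal to Theorem~\ref{main theorem}, which gives a $1$--$1$ correspondence between Ulrich bundles of rank $r2^{g-2}$ on $X$ and vector bundles on $E$ of the form $\sG \otimes \sF_U$ with $\rank \sG = r$ having the Raynaud property. Thus, in order to exclude the existence of an Ulrich bundle of rank $r2^{g-2}$ when $rg$ is odd, it suffices to show that no rank-$r$ bundle $\sG$ on $E$ can make $\sG \otimes \sF_U$ Raynaud.

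The Raynaud property $H^0(\sG\otimes\sF_U) = H^1(\sG\otimes\sF_U)=0$ implies in particular that $\chi(\sG\otimes\sF_U)=0$. Applying Riemann--Roch on $E$ (genus $g$),
$$
\chi(\sG\otimes\sF_U) = \deg(\sG\otimes\sF_U) + \rank(\sG\otimes\sF_U)(1-g).
$$
Using the multiplicativity of rank and the formula $\deg(\sG\otimes\sF_U) = \rank(\sG)\deg(\sF_U) + \rank(\sF_U)\deg(\sG)$, together with Proposition~\ref{degreeOfFu} (which gives $\rank\sF_U = 2^g$ and $\deg\sF_U = g\,2^{g-1}$), a short calculation yields
$$
\chi(\sG\otimes\sF_U) = 2^{g-1}\bigl(2\deg\sG - r(g-2)\bigr).
$$
Setting this to zero forces $\deg\sG = r(g-2)/2$.

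The final step is the parity observation: since $r(g-2) \equiv rg \pmod 2$, the number $r(g-2)/2$ is an integer only when $rg \equiv 0 \pmod 2$. Hence, if $rg \equiv 1 \pmod 2$, no rank-$r$ bundle $\sG$ on $E$ can have the required degree, so $\sG\otimes\sF_U$ cannot satisfy the Raynaud property, and by the correspondence of Theorem~\ref{main theorem} no Ulrich bundle of rank $r2^{g-2}$ exists on $X$. There is really no serious obstacle here; the only thing to be careful about is keeping the rank/degree bookkeeping for the tensor product correct, which amounts to invoking Proposition~\ref{degreeOfFu} for the numerical invariants of $\sF_U$.
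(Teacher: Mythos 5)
Your proposal is correct and follows essentially the same route as the paper's own proof: invoke the correspondence of Theorem~\ref{main theorem} to reduce to showing $\sG\otimes\sF_U$ cannot have vanishing cohomology, then compute $\chi(\sG\otimes\sF_U)$ via Riemann--Roch using the numerical invariants of $\sF_U$ from Proposition~\ref{degreeOfFu}. The only cosmetic difference is that you phrase the obstruction as a non-integrality condition on $\deg\sG = r(g-2)/2$, while the paper leaves the conclusion as $\chi=0 \Rightarrow rg \equiv 0 \pmod 2$; these are the same parity observation.
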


\begin{proof} If $\sG$ is a  vector bundle on $E$ of rank $r$ and degree $d$ then
\[
\deg (\sG\otimes \sF_{U}) = \deg \sG \rank \sF_{U}+\rank \sG\deg \sF_{U}= d2^{g}+rg2^{g-1}
\] 
by Proposition \ref{degreeOfFu} and
\[
\chi(\sG\otimes \sF_{U}) = \deg (\sG\otimes \sF_{U})+\rank (\sG\otimes \sF_{U})(1-g) 
=d2^{g}+rg2^{g-1}+r2^{g}(1-g)
\]
by Riemann-Roch.
Thus $\chi(\sG\otimes \sF_{U})=0$ implies $r\cdot g\equiv 0 \mod 2$.
\end{proof}

For small $g$ we constructed Ulrich bundles of rank $2^{g-1}$ from sufficiently general 
rank $2$ bundles $\sG$ on $E$ with our Macaulay2 package \cite{EKS}. 
Consider the  direct sum  $\sG_0=\sL_0 \oplus \sL_g$ of two general line bundle $\sL_i$ of degree $i$. 
In case of $g=3$ the cohomology table of the bundle
$\sG_0 \otimes \sF_U$ is the sum of two tables, one of which we displayed in Example \ref{g=3example bis} in case of $g=3$.  
The other is a shifted version of that table. 
 
So in case of $g=3$ the cohomology table of $\sG_0\otimes \sF_U$ 
has shape
\[
\begin{matrix}
\cdots&64&48&33&21&12&5&1& & \\
&&1&5&12&21&33&48&64 &\cdots.
\end{matrix}
\]
If for a general extension $0 \to \sL_0 \to \sG \to \sL_3 \to 0$ 
the connecting homomorphisms are of maximal rank, then the cohomology table of $\sG\otimes \sF_U$ has the form
\[
\begin{matrix}
\cdots&64&48&32&16& & & & & \\
&& & & &16&32&48&64 &\cdots.
\end{matrix}
\]
and $\sG$ gives rise to an Ulrich bundle of rank $2\cdot 2^{g-2}$.
 In special cases, for small $g$ we verified that this does occur with Macaulay2 \cite{M2} using our package \cite{EKS}.
 With the same idea we constructed Ulrich bundles of rank $3\cdot 2^{g-2}$ in special cases for $g=2$.

However we were not able to control the cohomology of 
$\sG \otimes \sF_U$ theoretically well enough to prove the existence of rank $2^{g-1}$ Ulrich bundle for every $X$.

\section{Ulrich bundles of rank $2^{g-1}$}\label{sec:Ulrich}

In this section we prove that a smooth complete intersection of two quadrics in $\PP^{2g+2}$, and therefore also in $\PP^{2g+1}$,  carries an Ulrich bundle of rank 
$2^{g-1}$. Our construction uses the construction of Ulrich bundles on a single quadric by  Kn\"orrer, which we now review.

\begin{theorem}[\emph{cf.}~\cite{Knorrer}]
The quadric $q_n=\sum_{i=0}^n x_iy_i$ has the matrix factorization $(\varphi_n,\psi_n)$ of size $2^{n}$ defined recursively by $\varphi_0=(x_0),\psi_0=(y_0)$ and
\[
\varphi_n = \begin{pmatrix}x_n & \varphi_{n-1} \cr
\psi_{n-1} & -y_n \end{pmatrix}, \quad
\psi_n = \begin{pmatrix}y_n & \varphi_{n-1} \cr
\psi_{n-1} & -x_n \end{pmatrix}
\]
for $n\ge 1$.                                  
\end{theorem}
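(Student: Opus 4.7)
The plan is a straightforward induction on $n$, verifying the two identities $\varphi_n \psi_n = q_n \cdot I_{2^n}$ and $\psi_n \varphi_n = q_n \cdot I_{2^n}$ simultaneously. The base case $n=0$ is immediate since $\varphi_0 \psi_0 = x_0 y_0 = q_0$ and $\psi_0 \varphi_0 = y_0 x_0 = q_0$. The size claim is also inductive: if $\varphi_{n-1}$ is $2^{n-1} \times 2^{n-1}$, then $\varphi_n$ is a $2\times 2$ block matrix of such blocks, hence $2^n \times 2^n$.

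For the inductive step, assume $\varphi_{n-1}\psi_{n-1} = \psi_{n-1}\varphi_{n-1} = q_{n-1} \cdot I_{2^{n-1}}$. I would then carry out the block multiplication
\[
\varphi_n \psi_n = \begin{pmatrix} x_n & \varphi_{n-1} \\ \psi_{n-1} & -y_n \end{pmatrix}\begin{pmatrix} y_n & \varphi_{n-1} \\ \psi_{n-1} & -x_n \end{pmatrix}
\]
block by block. The diagonal blocks give $x_n y_n + \varphi_{n-1}\psi_{n-1} = (x_n y_n + q_{n-1})I = q_n I$ and $\psi_{n-1}\varphi_{n-1} + y_n x_n = q_n I$, using the induction hypothesis. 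The off-diagonal blocks vanish because $x_n$ and $y_n$ are scalars (in the polynomial ring) that commute with the entries of $\varphi_{n-1}$ and $\psi_{n-1}$; explicitly, the upper right block is $x_n \varphi_{n-1} + \varphi_{n-1}(-x_n) = 0$, and similarly the lower left block equals $\psi_{n-1} y_n - y_n \psi_{n-1} = 0$. An identical computation (with the roles of $x_n$ and $y_n$ interchanged at the diagonal positions) handles $\psi_n \varphi_n = q_n I$.

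There is essentially no obstacle here; the only point that requires care is bookkeeping the signs in the anti-diagonal positions of $\varphi_n$ and $\psi_n$ so that the cross terms cancel and the diagonal contributions $x_n y_n$ and $y_n x_n$ add correctly to $q_{n-1}$. Since the ambient ring is commutative, commutativity of scalars with the recursively built blocks is automatic. Thus both identities propagate from $n-1$ to $n$, completing the induction and establishing that $(\varphi_n, \psi_n)$ is a matrix factorization of $q_n$ of size $2^n$.
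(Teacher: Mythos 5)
Your proof is correct. The paper itself states this result with a citation to Kn\"orrer and a \qed in the statement, i.e.\ it gives no proof; your inductive block-multiplication verification (with $x_n$, $y_n$ understood as scalar multiples of the appropriately sized identity in the diagonal blocks) is the standard argument and is carried out correctly, including the size count and the cancellation of the off-diagonal blocks.
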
      

Let $(A,B) =(\varphi_n,\psi_n)$ and consider the matrix factorizations
\[
(A(x,y), B(x,y))\quad \text{and}\quad (A(v,w),B(v,w))
\]
of $q(x,y)=\sum_{i=0}^n x_iy_i$ and $q(v,w)=\sum_{i=0}^n v_iw_i$ respectively over the ring $P:=k[x|y,v|w]$,
where $x|y$ denotes the catenation $x_0,\ldots, x_n, y_0,\ldots,y_n$ and similarly for $v|w$.

\begin{proposition}\label{special product}  Let 
\[
\widetilde q(v,w,x,y)=\sum_{i=0}^n (x_iw_i+y_iv_i) = (v|w)\cdot (y|x).
\]
There is an identity
\[
\begin{pmatrix}A(x,y)& A(v,w)\end{pmatrix} 
\begin{pmatrix}B(v,w)\cr B(x,y)\end{pmatrix}
= \widetilde q(v,w,x,y) \id_{2^n}.
\]
\end{proposition}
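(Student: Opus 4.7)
The plan is to prove the identity by induction on $n$, exploiting the $2\times 2$ block recursion that defines Kn\"orrer's matrix factorization. The base case $n=0$ is immediate: both $A$ and $B$ are $1\times 1$ with $A(x,y)=x_0$, $B(v,w)=w_0$, $A(v,w)=v_0$, $B(x,y)=y_0$, so the left hand side is $x_0 w_0 + v_0 y_0 = \widetilde q_0$.

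For the inductive step, write $A' := \varphi_{n-1}(x,y)$, $B' := \psi_{n-1}(x,y)$, $\widetilde A := \varphi_{n-1}(v,w)$, $\widetilde B := \psi_{n-1}(v,w)$, so that
$$A(x,y) = \begin{pmatrix} x_n I & A' \\ B' & -y_n I \end{pmatrix},\qquad B(v,w) = \begin{pmatrix} w_n I & \widetilde A \\ \widetilde B & -v_n I \end{pmatrix},$$
and analogously for $A(v,w)$ and $B(x,y)$. Multiply out both $A(x,y) B(v,w)$ and $A(v,w) B(x,y)$ as block matrices and add. The off-diagonal blocks collapse to expressions such as $x_n \widetilde A - \widetilde A x_n + v_n A' - A' v_n$, which vanish because the scalar variables commute with the matrix blocks. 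The diagonal blocks reduce to
$$(x_n w_n + y_n v_n)\,I + \bigl(A'\widetilde B + \widetilde A B'\bigr) \quad\text{and}\quad (y_n v_n + w_n x_n)\,I + \bigl(B'\widetilde A + \widetilde B A'\bigr).$$

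The top-left diagonal block is handled directly by the inductive hypothesis applied at level $n-1$. The bottom-right diagonal block, however, has $A$ and $B$ swapped, so to close the induction one must prove \emph{simultaneously} the twin identity
$$\begin{pmatrix} B(x,y) & B(v,w) \end{pmatrix}\begin{pmatrix} A(v,w) \\ A(x,y) \end{pmatrix} = \widetilde q(v,w,x,y)\,\mathrm{id}_{2^n}.$$
The $n=0$ case of the twin identity is again immediate since $1\times 1$ matrices commute; and the inductive step for the twin is symmetric to the one above, with the two diagonal blocks again feeding into each other's inductive hypothesis. Thus one runs the induction on the pair of statements together.

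The key conceptual point that makes everything mesh is that the new variables $x_n, y_n, v_n, w_n$ appear only as scalar multiples of the identity block in the recursion; consequently all cross terms cancel automatically and the recursion reduces cleanly to the $(n-1)$-case. The main obstacle is purely bookkeeping — one must track the signs correctly in the off-diagonal entries of $\varphi_n$ and $\psi_n$ — but nothing deeper is required. Conceptually, the identity is a polarization statement for the Clifford algebra underlying Kn\"orrer's construction: the matrices $A(x,y)$ and $A(v,w)$ are Clifford-type operators on the $2^n$-dimensional spinor space, and their symmetric combination with $B$ computes the bilinear form polarizing the doubled quadratic form $\widetilde q$.
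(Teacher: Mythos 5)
Your induction is correct, and you rightly identified the subtle point: the bottom-right block forces you to carry along the ``twin'' identity with $A$ and $B$ exchanged, so the induction must run on the pair of statements simultaneously. But the paper's proof is far shorter, and it is precisely the polarization you relegate to a ``conceptual point'' in your last paragraph. Since the entries of $\varphi_n$ and $\psi_n$ are linear in their arguments, $A(x+v,y+w)=A(x,y)+A(v,w)$ and $B(x+v,y+w)=B(x,y)+B(v,w)$. Applying the matrix factorization relation at $(x+v,y+w)$ gives
$$\bigl(A(x,y)+A(v,w)\bigr)\bigl(B(x,y)+B(v,w)\bigr)=q(x+v,y+w)\,\mathrm{id}_{2^n},$$
and since $A(x,y)B(x,y)=q(x,y)\,\mathrm{id}$, $A(v,w)B(v,w)=q(v,w)\,\mathrm{id}$, and $q(x+v,y+w)=q(x,y)+q(v,w)+\widetilde q(v,w,x,y)$, the pure terms cancel and the mixed terms give the claimed identity directly. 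Your block-by-block induction reconstructs this by hand from the specific shape of Kn\"orrer's recursion, which is more work and obscures the fact that the identity holds for \emph{any} matrix factorization $(A,B)$ of $q$ with linear entries; the polarization argument makes that generality manifest and needs no paired induction, because both $A(x,y)B(v,w)+A(v,w)B(x,y)$ and $B(x,y)A(v,w)+B(v,w)A(x,y)$ fall out of the single expansion of $q(x+v,y+w)\,\mathrm{id}$ (the latter from $B(x+v,y+w)A(x+v,y+w)$).
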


\begin{proof}
Since $A(x,y)+A(v,w)=A(x+v,y+w)$ and $B(x,y)+B(v,w)=B(x+v,y+w)$ we have
\[A(x+v,y+w)B(x+v,y+v)=  q(x+v,y+w) \id_{2^n}.\]
The mixed terms give
\[A(x,y)B(v,w)+A(v,w)B(x,y)=\widetilde q(v,w,x,y)\id_{2^n}.\qedhere\]
\end{proof}

Thus if we restrict the matrices in Proposition~\ref{special product} to an isotropic subspace $\Sigma$ of $\widetilde q$ we get 
a complex and we will see that, for a sufficiently general choice of the isotropic subspace, the restriction to $\Sigma$ is a minimal free resolution
of an Ulrich module over $P_{\Sigma}$.

To define the isotropic subspace, let $\Lambda$ be a skew-symmetric $2(n+1)\times 2(n+1)$ matrix of scalars, and set
\[
G_\Lambda=\begin{pmatrix}0 & \id_{n+1} \cr \id_{n+1}&0 \end{pmatrix} \Lambda.
\]
We have
\[
(x|y)G_\Lambda \cdot (y|x) = (y|x)\Lambda \cdot (y|x) =0
\]
and thus the equation $(v|w)=(x|y)G_\Lambda$ defines an isotropic subspace of  
$\widetilde q(v,w,x,y)$. 

The matrices  
\[
A_1 =A(x,y),\ B_1= B(x,y)\quad
\text{and}\quad
A_2 = A((x|y)G_\Lambda),\ B_2 = B((x|y)G_\Lambda)
\]
define matrix factorizations of $q_1 = q(x,y)$ and $q_2 = q((x|y)G_\Lambda))$.
Let 
\[
A_\Lambda=A_1|A_2
\]
be the concatenation, which is a  $2^n\times 2^{n+1}$ matrix
in the $2n+2$ variables $x_0,\ldots,y_n$. 

\begin{theorem}\label{specialUlrich} For a general choice of $\Lambda$ the 
 ring $k[x_0,\ldots,y_n]/(q_1, q_2)$ is a complete
intersection with isolated singularities and
\[
M_\Lambda := \coker A_\Lambda
\]
is an Ulrich module of rank $2^{n-2}$ over this ring.
\end{theorem}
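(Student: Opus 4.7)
The plan is to exhibit the minimal $P$-free resolution of $M_\Lambda$ predicted by the Ulrich property, and then read off maximal Cohen--Macaulayness and rank from its shape. Explicitly, with $B_\Lambda = \begin{pmatrix} B_2 \\ B_1 \end{pmatrix}$, I expect the resolution
$$
F_\bullet:\quad 0 \to P^{2^n}(-2) \xrightarrow{B_\Lambda} P^{2^{n+1}}(-1) \xrightarrow{A_\Lambda} P^{2^n} \to M_\Lambda \to 0.
$$

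First I would dispose of the genericity and check that $F_\bullet$ is a complex. For generic skew-symmetric $\Lambda$, the form $q_2 = q((x|y)G_\Lambda)$ is nondegenerate and linearly independent of $q_1$, and the pencil $sq_1+tq_2$ has $2n+2$ distinct singular members, giving a smooth complete intersection; this is an open condition verified on a single example. That $F_\bullet$ is a complex follows from Proposition~\ref{special product}: since $G_\Lambda = Q\Lambda$ with $\Lambda$ skew, the substitution $(v|w)=(x|y)G_\Lambda$ makes $\widetilde q = (y|x)\Lambda(y|x)^T \equiv 0$ as a polynomial, so the proposition collapses to $A_1B_2+A_2B_1=0$ in $P$, i.e.\ $A_\Lambda B_\Lambda = 0$. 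Next, since $q_i\,\id_{2^n}=A_iB_i$ lies in the image of $A_\Lambda$, both $q_1$ and $q_2$ annihilate $M_\Lambda=\coker A_\Lambda$, so $M_\Lambda$ is a $P_\Sigma$-module.

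The main technical step is exactness of $F_\bullet$, which I would verify via the Buchsbaum--Eisenbud exactness criterion. From $A_iB_i=q_i\,\id_{2^n}$ and factoriality of $P$ we have $\det A_i = c_i\, q_i^{2^{n-1}}$ and $\det B_i = c_i^{-1}\, q_i^{2^{n-1}}$, so $q_1^{2^{n-1}}$ and $q_2^{2^{n-1}}$ both appear as maximal minors of $A_\Lambda$ and of $B_\Lambda$. Hence each of $A_\Lambda$, $B_\Lambda$ has generic rank $2^n$, the rank condition $\rank A_\Lambda + \rank B_\Lambda = 2^{n+1}$ is satisfied, and the Fitting ideals $I_{2^n}(A_\Lambda)$ and $I_{2^n}(B_\Lambda)$ both contain the regular sequence $(q_1,q_2)$, hence have depth $\ge 2$. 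Buchsbaum--Eisenbud then delivers exactness, so $F_\bullet$ is the minimal $P$-free resolution of $M_\Lambda$.

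Finally, since $F_\bullet$ is a linear $P$-free resolution of length $2=\codim_P P_\Sigma$, the Auslander--Buchsbaum formula yields $\depth_P M_\Lambda = 2n = \dim P_\Sigma$, so $M_\Lambda$ is maximal Cohen--Macaulay over the complete intersection $P_\Sigma$; combined with linearity of the $P$-resolution this is precisely the Ulrich property. The rank follows from the Hilbert series computation
$$
H(M_\Lambda,z) = \frac{2^n - 2^{n+1}z + 2^n z^2}{(1-z)^{2n+2}} = \frac{2^n}{(1-z)^{2n}},
$$
which, compared against $\deg P_\Sigma = 4$, gives $\rank_{P_\Sigma} M_\Lambda = 2^n/4 = 2^{n-2}$. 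The main obstacle I anticipate is the Buchsbaum--Eisenbud depth verification; the key identity $A_1B_2+A_2B_1=0$ is immediate from the skew-symmetry of $\Lambda$ together with Proposition~\ref{special product}, and the Ulrich characterization and rank calculation are then formal consequences.
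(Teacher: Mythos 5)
The central difficulty with your proposal is that the smoothness claim is false, and the ``single example'' you plan to verify cannot exist. Because $\Lambda$ is skew-symmetric, the Gram matrix of $q_2 = q\bigl((x|y)G_\Lambda\bigr)$ in the coordinates $(x|y)$ is proportional to $-\Lambda J \Lambda$ where $J=\begin{pmatrix}0&I\\ I&0\end{pmatrix}$; the pencil discriminant is therefore (up to scalar) $\det(sI - tG_\Lambda^2)$. Since $G_\Lambda = J\Lambda$ satisfies $J G_\Lambda J^{-1} = -G_\Lambda^T$, the eigenvalues of $G_\Lambda$ come in $\pm$ pairs, so every eigenvalue of $G_\Lambda^2$ has even multiplicity and the discriminant is a perfect square. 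Consequently the pencil always has fewer than $2n+2$ distinct singular members, and $V(q_1,q_2)$ is \emph{never} smooth for this construction. (Indeed the theorem only claims isolated singularities.) The paper handles this head-on: it fixes a diagonal $\Lambda$, computes $q_2 = -\sum d_i^2 x_iy_i$, exhibits the Jacobian matrix, and shows the singular locus of $V(q_1,q_2)$ is exactly the $2n+2$ coordinate points. This step cannot be dismissed as a routine open-condition argument, because it is also what establishes primeness of $(q_1,q_2)$ --- which is needed both to conclude $\det A_i$ is a power of $q_i$ and to justify $\Ann M_\Lambda = (q_1,q_2)$ so that the rank computation makes sense over the intended ring.

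Two smaller points. First, your claim $\det A_i = c_i\, q_i^{2^{n-1}}$ does not follow from factoriality alone; factoriality gives $\det A_i = c\, q_i^a$, $\det B_i = c^{-1} q_i^b$ with $a+b=2^n$, and pinning down $a=b=2^{n-1}$ requires a separate argument (e.g.\ induction on Kn\"orrer's recursion, or the rank of the spinor module). Fortunately the Buchsbaum--Eisenbud depth bound only needs $a,b\ge 1$, so this over-precision is harmless but its stated justification is wrong. Second, your remaining steps --- $A_\Lambda B_\Lambda = 0$ via $(y|x)\Lambda(y|x)^T \equiv 0$, the depth-$\ge 2$ bound from the regular sequence $(q_1^{a_1},q_2^{a_2})$ inside the Fitting ideals, Buchsbaum--Eisenbud exactness, Auslander--Buchsbaum for MCM, and the Hilbert-series rank count --- are all essentially the paper's argument, modulo the gaps above. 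If you replace the false smoothness assertion with the paper's explicit singular-locus computation for the diagonal $\Lambda$ (thereby getting codimension $2$, isolated singularities, and primeness for general $\Lambda$ by semicontinuity), the proposal becomes correct.
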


\begin{proof}
 Set $P=k[x_0,\ldots,y_n]$. For each $\Lambda$ we have maps
\[
\begin{diagram}
0&\lTo &M_\Lambda &\lTo &P^{2^n} &\lTo^{\begin{pmatrix}
A_1 & A_2
\end{pmatrix}
} & P^{2^{n+1}}(-1)
&\lTo^{\begin{pmatrix}
B_2\\B_1
\end{pmatrix}
}  
&P^{2^n}(-2) &\lTo & 0
\end{diagram}.
\]
By our choice of $A_2$ and $B_2$ this is a complex.

 We claim that for a general choice of $\Lambda$ the ideal $(q_1,q_2)$ is a prime ideal of
 codimension $2$ with isolated singularities. It suffices to prove this 
for a particular choice of $\Lambda$. 

We will actually prove the result for matrices $\Lambda$ of the form  
\[
\Lambda=\begin{pmatrix}0 & D \\ -D&0 \end{pmatrix}
\]
where $D$ is a diagonal matrix with entries $d_i$
 such that 
\[
d_0,\ldots,d_n,-d_0,\ldots,-d_n
\]
are $2(n+1)$ different values. In this case
\[
G_\Lambda=\begin{pmatrix} -D & 0 \cr 0& D \end{pmatrix},\quad
A_\Lambda=(A(x_0,\ldots,x_n,y_0,\ldots,y_n)|A(-d_0x_0,\ldots-d_nx_n,d_0y_0,\ldots,d_ny_n),
\]
and
\[
q_2 = q_1(-d_0x_0, \dots, -d_nx_n, d_0y_0, \dots, d_ny_n) = -\sum_{i=0}^n d_i^2x_iy_i.
\]

We will now show that $V(q_1, q_2)$ is singular precisely at the coordinate points. The jacobian matrix of $(q_1,-q_2)$ is
\[
\begin{pmatrix} 
y_0 & y_1 & \ldots & y_n & x_0 & \ldots & x_n \cr
d_0^2y_0 & d_1^2y_1 & \ldots & d_n^2y_n & d_0^2x_0 & \ldots & d_n^2x_n 
\end{pmatrix}
\]
 The squares $d_0^2, \ldots,d_n^2$ are pairwise distinct, since $d_0,\ldots,d_n,-d_0,\ldots, -d_n$ are $2(n+1)$ distinct values by assumption.
 Thus the zero locus of the ideal of $2\times 2$ minors of the jacobian matrix is the union of the $n+1$ lines $L_i= V(\bigcup_{j \not=i}\{x_j,y_j\})$
 defined by those linear combinations of the two rows that do not consist of independent linear forms. These lines intersect $V(q_1,q_2)$ in the $2n+2$ coordinate points. It follows that $(q_1,q_2)$ has codimension 2 and isolated singularities, and thus is prime.
 
Since each $q_i$ is prime and $A_i$ is part of a matrix factorization of $q_i$, the determinant of $A_i$ is a power of $q_i$. 
Thus if $\Lambda$ is general,
 the maximal minors of $A_\Lambda$ 
generate an ideal of codimension at least 2, and similarly for $B_\Lambda$ so the complex is exact by \cite{Buchsbaum-Eisenbud}.

We conclude that
\[\ann\; M_\Lambda =(q_1,q_2)\]
since any element of $\ann\; M_\Lambda \setminus (q_1,q_2)$ would lead to a support of codimension at least $3$. Thus $M_\Lambda$ is an Ulrich module over the ring $P/(q_1, q_2)$
 and the degree of $M_\Lambda$ is $2^n$, so the rank of $M_\Lambda$ as  an $P/(q_1,q_2)$-module is~$2^{n-2}$.
 \end{proof}

\begin{theorem}\label{2n+1} Let $k$ be an algebraically closed field of $\chara k \not= 2$, and $X \subset \PP^{2n}$ be a smooth complete 
intersection of two quadrics. Then $X$ carries an Ulrich bundle of rank $2^{n-2}$.
\end{theorem}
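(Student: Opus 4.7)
The plan is to realize $X\subset\PP^{2n}$ as a hyperplane section of a smooth complete intersection $Y\subset\PP^{2n+1}$ of two quadrics to which Theorem~\ref{specialUlrich} applies, and then to restrict the Ulrich bundle it supplies.

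First I would construct the lift. Write $X=V(q_1,q_2)\subset\PP^{2n}$ with coordinates $x_0,\ldots,x_{2n}$; adjoin a new coordinate $z$, choose scalars $c_1,c_2$, and set $\tilde q_i=q_i+c_iz^2$ and $Y=V(\tilde q_1,\tilde q_2)\subset\PP^{2n+1}$. Then $X=Y\cap V(z)$, and the pencil discriminant factorizes as
\[
\det(s\tilde q_1+t\tilde q_2)=(sc_1+tc_2)\cdot\det(sq_1+tq_2).
\]
Smoothness of $X$ implies $\det(sq_1+tq_2)$ has $2n+1$ distinct roots in $\PP^1$; for generic $(c_1,c_2)$ the extra root $[-c_2:c_1]$ differs from these, so the lifted discriminant has $2n+2$ distinct roots and $Y$ is a smooth complete intersection of two quadrics in $\PP^{2n+1}$. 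After a linear change of coordinates $\tilde q_1$ takes the Kn\"orrer form $\sum x_iy_i$, and Theorem~\ref{specialUlrich} then produces an Ulrich module $M$ of rank $2^{n-2}$ on $Y$.

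Then I would restrict. The quotient $\overline M:=M/zM$ is the desired Ulrich bundle on $X$ of rank $2^{n-2}$. Since $Y$ is irreducible and $M$ is maximal Cohen--Macaulay over $P_Y$, its only associated prime is $I(Y)$; as $z$ does not vanish identically on $Y$, it is a non-zero-divisor on $M$. Tensoring the minimal linear free resolution of $M$ over $P=k[x_0,\ldots,x_{2n},z]$ with $P/(z)$ therefore yields a minimal linear free resolution of $\overline M$ over $k[x_0,\ldots,x_{2n}]$, so $\overline M$ has a linear resolution. Also $\depth\overline M=\depth M-1=\dim Y-1=\dim X$, so $\overline M$ is maximal Cohen--Macaulay on $X$, hence Ulrich; and the generic rank $2^{n-2}$ is preserved by restriction.

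The main obstacle lies in applying Theorem~\ref{specialUlrich} to the particular lift $Y$: its hypothesis restricts to pairs $(q_1,q_2)=(\sum x_iy_i,\ q\circ G_\Lambda)$ for general skew-symmetric $\Lambda$, and a dimension count shows this subfamily is in general proper in the moduli of smooth complete intersections of two quadrics in $\PP^{2n+1}$. The cleanest workaround is a flat-family / semi-continuity argument: since the Ulrich condition is open in flat families, one can deform $Y$ through a flat family of smooth complete intersections of two quadrics to a nearby $Y_\Lambda$ covered by Theorem~\ref{specialUlrich}, construct the Ulrich bundle there, and specialize back to $Y$. Some care is required to verify that the specialization remains locally free and Ulrich on $Y$, but this is a standard application of coherent-sheaf theory on families and is where the technical heart of the proof sits.
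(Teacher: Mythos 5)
Your plan has a genuine gap at the key step, which you partially sense in your final paragraph but do not resolve. Theorem~\ref{specialUlrich} produces an Ulrich module only over the very specific complete intersections $V(q_1,q_2)$ with $q_1=\sum x_iy_i$ and $q_2 = q_1((x|y)G_\Lambda)$ for a skew-symmetric $\Lambda$, and for \emph{every} such $\Lambda$ the resulting complete intersection in $\PP^{2n+1}$ is singular: its pencil discriminant $\det(sB_1+tG_\Lambda B_1 G_\Lambda^t)$ always has multiple roots (in the diagonal case it is $\prod_i (s-td_i^2)^2$), never the $2n+2$ distinct roots required for smoothness. So there is no linear change of coordinates turning your smooth lift $Y$ into something covered by Theorem~\ref{specialUlrich}; the hypothesis simply cannot be met on a smooth $Y\subset\PP^{2n+1}$.

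Your proposed workaround --- deform $Y$ to a nearby $Y_\Lambda$ in the family, build the Ulrich bundle there, and ``specialize back'' --- runs in the wrong direction. The Ulrich condition is \emph{open} in flat families (it is a finite list of cohomology vanishings, and $h^i$ is upper semicontinuous), so having an Ulrich bundle at nearby parameters gives no control at the limit; a flat limit of Ulrich bundles need not be Ulrich, and the sheaf need not even stay locally free on the special fibre. This is not a routine coherent-sheaf argument to be deferred; it is exactly where the proposal breaks.

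The paper proceeds differently. It never lifts $X$ to a smooth $Y$. Instead it takes the (singular) complete intersection $V(\sum x_iy_i,\sum a_ix_iy_i)\subset\PP^{2n+1}$ supplied by Theorem~\ref{specialUlrich}, restricts the Ulrich module to a hyperplane $\PP^{2n}$ cut out by a $(2n+2)\times(2n+1)$ matrix $B$ (this misses the isolated singularities, so the section is a smooth complete intersection and the restriction is an Ulrich bundle), and then computes the Hessian of the restricted pencil explicitly. A Vandermonde-type determinant argument shows that, as $B$ varies, the discriminant can be made to have any prescribed set of $2n+1$ distinct roots. Since a smooth complete intersection of two quadrics in $\PP^{2n}$ is classified up to projective equivalence by exactly this root data, every such $X$ is hit, which is the content you are missing. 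Your observation that $z$ is a non-zerodivisor on $M$ and that $\overline M=M/zM$ is then MCM with a linear resolution is correct and is the same easy part of the argument as the paper's Corollary~\ref{reduction}; the real content is the surjectivity-onto-moduli step, which your sketch replaces with an argument that does not work.
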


\begin{corollary}\label{reduction}
Let $k$ be an algebraically closed field of $\chara k \not= 2$, and $X \subset \PP^{2g+1}$ be a smooth complete 
intersection of two quadrics. Then $X$ carries an Ulrich bundle of rank $2^{g-1}$.
\end{corollary}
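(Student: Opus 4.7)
The plan is to deduce the corollary from Theorem~\ref{2n+1} (applied with $n=g+1$) by realizing the given $X\subset\PP^{2g+1}$ as a hyperplane section of a smooth complete intersection $\widetilde X\subset\PP^{2g+2}$ of two quadrics, and then restricting an Ulrich bundle of rank $2^{g-1}$ from $\widetilde X$ to $X$.

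First I would embed $\PP^{2g+1}$ as the hyperplane $H=V(x_{2g+2})\subset\PP^{2g+2}$. Writing $q_1,q_2$ for the defining quadrics of $X$, I would consider extensions of the form
$$\widetilde q_i = q_i + x_{2g+2}\ell_i + c_i\, x_{2g+2}^{\,2}, \qquad i=1,2,$$
where $\ell_1,\ell_2$ are linear forms in $x_0,\dots,x_{2g+1}$ and $c_1,c_2\in k$. By construction $\widetilde q_i|_H=q_i$, so automatically $\widetilde X\cap H=X$. A Bertini-type argument on the pencil $s\widetilde q_1+t\widetilde q_2$ shows that for generic $(\ell_i,c_i)$ the complete intersection $\widetilde X=V(\widetilde q_1,\widetilde q_2)\subset\PP^{2g+2}$ is smooth: any singular point of $\widetilde X$ lying on $H$ would force $X$ itself to be singular, while singular points off $H$ are avoided for generic choices of the parameters, since the parameter space is positive-dimensional and the locus of bad extensions is a proper subvariety.

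Next, applying Theorem~\ref{2n+1} with $n=g+1$ to $\widetilde X\subset\PP^{2g+2}$ produces an Ulrich bundle $\widetilde\sE$ on $\widetilde X$ of rank $2^{(g+1)-2}=2^{g-1}$. I would then set $\sE:=\widetilde\sE|_X$ and verify the Ulrich property on $X$ from the short exact sequence
$$0\longrightarrow \widetilde\sE(-1)\longrightarrow \widetilde\sE\longrightarrow \sE\longrightarrow 0$$
on $\widetilde X$. Twisting by $\sO(m)$ and taking cohomology, each $H^i(X,\sE(m))$ is sandwiched between $H^i(\widetilde X,\widetilde\sE(m))$ and $H^{i+1}(\widetilde X,\widetilde\sE(m-1))$; both vanish when $-\dim\widetilde X\le m-1$ and $m\le -1$, i.e.\ in the required range $-\dim X\le m\le -1$. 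Hence $\sE$ is Ulrich on $X$, and clearly $\rank\sE=\rank\widetilde\sE=2^{g-1}$.

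The main obstacle is the smoothness of $\widetilde X$; everything else is formal. Once one knows that a smooth extension exists, the restriction step is automatic via the standard fact that Ulrich bundles pull back along hyperplane sections of the ambient projective space when the section remains a smooth subscheme of the ambient variety.
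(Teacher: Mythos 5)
Your proposal is correct and follows essentially the same route as the paper's own proof: realize $X\subset\PP^{2g+1}$ as a transversal hyperplane section of a smooth complete intersection $\widetilde X\subset\PP^{2g+2}$, apply Theorem~\ref{2n+1} with $n=g+1$, and restrict the resulting Ulrich bundle, using the standard short exact sequence argument. You supply a bit more detail than the paper (which simply asserts that such a smooth extension exists), but the key ideas and the cohomological bookkeeping match.
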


\begin{proof}[Proof of Corollary~\ref{reduction}] Any smooth complete intersection in $\PP^{2g+1}$ is a hyperplane section of a smooth complete intersection in
$\PP^{2g+2}$. Taking $n=g+1$, the restriction of the Ulrich module constructed in Theorem~\ref{2n+1} is an Ulrich module of rank
$2^{g-1}$.
\end{proof}

\begin{proof}[Proof of Theorem~\ref{2n+1}] We obtain an Ulrich module on some smooth complete intersection by restricting $M_\Lambda$ from above to a general hyperplane $H=\PP^{2n} \subset \PP^{2n+1}$. The intersection will be smooth because $V(q_1,q_2)$ has only isolated singularities. To prove that every smooth complete intersection carries an Ulrich module we need additional
arguments. The complete  intersection $V(q_1',q_2')$ of two quadrics in $\PP^{2n}$ is smooth if and only if the discriminant
\[
f= \det \hess (sq_1'+q_2') \in k[s]
\]
of the pencil has $2n+1$ distinct roots, and in that case $q_1'$ and $q_2'$ can be simultaneously diagonalized by the
argument given at the beginning of Section~\ref{pencil section}.
Thus it suffices to construct an Ulrich module $M'$ on a the complete intersection $V(q_1',q_2')$ whose discriminant has any given set of $2n+1$ distinct roots. In the proof of Theorem \ref{specialUlrich} we constructed an Ulrich module for
$q_1=\sum_{i=0}^n x_iy_i$ and $q_2=-\sum_{i=0}^n d_i^2x_iy_i$ for distinct values $d_0^2,\ldots,d_n^2$. 
Since $k$ is algebraically closed there exists an Ulrich module for $V(\sum_{i=0}^n x_iy_i, \sum_{i=0}^{n} a_i x_iy_i)$
for every tuple of  distinct values $a_0,\ldots, a_{n}$. The corresponding Hessian is
\[
H = \begin{pmatrix}
0 & D' \cr
D' & 0 
\end{pmatrix} \hbox{ with a diagonal matrix } D'=
\begin{pmatrix} 
s+a_0 & & \cr
 & \ddots & \cr
 & & s+a_n
 \end{pmatrix}.
\]
We restrict the quadrics to the subspace
generated by the columns of the $(2n+2) \times (2n+1)$ matrix
of 
\[
B=\begin{pmatrix}
 1 &  & 0 \cr
  & \ddots & \cr
  0 & & 1 \cr
  b_0 & \dots & b_{2n}
\end{pmatrix}.
\]
Setting $\ell_i = s+a_i$
the Hessian of the restricted pencil is 
\[
B^t H B=\begin{pmatrix}
&&& \ell_nb_0 & \ell_0& &  \cr
& 0 & & \vdots & & \ddots & \cr
& & & \ell_nb_{n-1} &  & & \ell_{n-1} \cr
\ell_nb_0& \dots &\ell_nb_{n-1} & 2\ell_nb_{n} &\ell_nb_{n+1}  & \dots & \ell_nb_{2n} \cr
\ell_0& & & \ell_nb_{n+1} & && \cr
& \ddots & & \vdots & & 0 & \cr
&  &\ell_{n-1} & \ell_nb_{2n} & & & \cr
\end{pmatrix}.
\]
Direct computation shows that the determinant of this matrix is
\[f = (-1)^{n} 2  h \prod_{i=0}^{n}\ell_i=(-1)^{n} 2  h \prod_{i=0}^{n}(s+a_i) \]
with 
\[h = \sum_{i=0}^{n-1} (b_ib_{i+n+1} \prod_{j\neq i} (s+a_j)) -b_n \prod_{j\neq n}(s+a_j).\]
Since the coefficients of $ \prod_{j\neq i}(s+a_j)$ are the elementary symmetric functions $e_{i,k}$
on $\{a_0,\ldots,a_{n} \} \setminus \{a_i\}$, we obtain
\begin{equation}\label{1} h=(b_0b_{n+1}, \ldots,  b_{n-1}b_{2n}, -b_n) E \begin{pmatrix}s^n \cr \vdots \cr s \cr 1\end{pmatrix}
\end{equation}
where $E=(e_{i,k})_{\substack{i=0,\ldots,n \\ k=0,\ldots, n}}$. We claim that
\[\det E= \prod_{0 \le i < j \le n} (a_i-a_j).\]
Regarding the $a_i$'s as variables, we see that  $\det E \in k[a_0,\ldots,a_{n}]$ is not identically zero, because the term $\prod_{i=0}^{n-1} a_i^{n-i}$
occurs precisely once in the determinant as the product of the leading terms $1,a_0,a_0a_1,\ldots,a_0a_1\dots a_{n-1}$ of the diagonal entries. On the other hand 
$(a_i-a_j)$ is a factor of  $\det E \in k[a_0,\ldots,a_{n}]$ because if $a_i=a_j$ then the  matrix $E$ has two equal rows.
So these linear forms are factors of $\det E \in k[a_0,\ldots,a_{n}]$, and their product coincides with $\det E$ for degree reasons and by comparing the leading term.

Thus if the $a_i$ are distinct, then $E$ is invertible, and every polynomial $h$ of degree $n$ in $k[s]$ can be represented
in the form (\ref{1}).
In particular, we can choose $b_0,\dots, b_{2n} \in k$ such that the discriminant
$f$ is equal to $\prod_{i=0}^{n}(s+a_i) \prod_{i=1}^n (s+c_i)$ for any $2n+1$ distinct non-zero values $a_0,\ldots,a_{n},c_1,\ldots,c_n \in k$.  
A smooth complete intersection 
of 2 quadrics in $\PP^{2n}$ is determined up to projective equivalence by the $2n+1$ distinct roots of its discriminant, this concludes the proof.
\end{proof}


\newcommand{\etalchar}[1]{$^{#1}$}
\providecommand{\bysame}{\leavevmode\hbox to3em{\hrulefill}\thinspace}
\providecommand{\MR}{\relax\ifhmode\unskip\space\fi MR }
\providecommand{\MRhref}[2]{%
  \href{http://www.ams.org/mathscinet-getitem?mr=#1}{#2}
}
\providecommand{\href}[2]{#2}

\end{document}